\documentclass[11pt,reqno]{amsart}
\usepackage[margin=1.05in]{geometry}
\usepackage{amsmath,amssymb,amsthm,mathtools,booktabs,array,enumitem,microtype, amsaddr}
\usepackage{xcolor}
\usepackage[section]{placeins}
\usepackage{tikz}
\usetikzlibrary{arrows.meta,positioning,calc}
\definecolor{figblue}{HTML}{24567A}
\definecolor{figteal}{HTML}{16756E}
\definecolor{figorange}{HTML}{AB6125}
\usepackage[colorlinks=true,linkcolor=blue!50!black,citecolor=blue!50!black,urlcolor=blue!50!black]{hyperref}
\hypersetup{pdftitle={Rank-sensitive vertex bounds for semidefinite lifts},pdfsubject={Rank-sensitive vertex counting, sums of squares, and polygon geometry},pdfauthor={Avinash Bhardwaj}}
\theoremstyle{plain}
\newtheorem{theorem}{Theorem}[section]
\newtheorem{proposition}[theorem]{Proposition}
\newtheorem{lemma}[theorem]{Lemma}
\newtheorem{corollary}[theorem]{Corollary}

\theoremstyle{definition}

\newtheorem{remark}[theorem]{Remark}
\DeclareMathOperator{\rank}{rank}
\DeclareMathOperator{\conv}{conv}
\DeclareMathOperator{\tr}{tr}

\newcommand{\R}{\mathbb R}
\newcommand{\PSD}[1]{\mathbb S^{#1}_+}
\newcommand{\xcLP}{\operatorname{xc}_{\mathrm{LP}}}
\newcommand{\xcPSD}{\operatorname{xc}_{\mathrm{PSD}}}
\newcommand{\rkplus}{\operatorname{rank}_+}
\newcommand{\rkpsd}{\operatorname{rank}_{\mathrm{psd}}}
\newcommand{\rksqrt}{\operatorname{rank}_{\sqrt{\phantom{x}}}}

\newcommand{\Par}{\mathrm{PAR}}

\newcommand{\sosc}{\operatorname{sosdim}}

\begin{document}
\title[Rank-sensitive vertex bounds for semidefinite lifts]{Rank-sensitive vertex bounds for semidefinite lifts}
\author{Avinash Bhardwaj}
\email{abhardwaj@iitb.ac.in}
\address{Department of Industrial Engineering and Operations Research,\\ Indian Institute of Technology Bombay,\\ Mumbai, India 400076}
\date{October 2026}
\begin{abstract}
We study how the ranks of factors in a positive semidefinite slack factorization constrain its size. We bound the number of low-rank vertex factors in any fixed factorization, and use this to determine the asymptotic order of the minimum factorization size under any fixed bound on vertex-factor ranks: $\Theta(n\log n)$ for permutahedra $\Pi_n$ and $\Theta(\log N)$ for regular $N$-gons. In the rank-one
case this gives the order of the minimum dimension of a space of real functions on the vertices in which every facet slack is a sum of squares, with no symmetry or degree restriction. For unrestricted lifts, the bound gives $\mathrm{xc}_{\mathrm{PSD}}(\Pi_n)\ge n+\log_3 n-O(1)$, and shows that a permutahedron factorization of linear size would need vertex factors of rank $\Omega(\log n)$ at all but a vanishing fraction of vertices.

We also prove that every polytope of real positive semidefinite rank at most four has at most twelve vertices, and construct explicit size-four lifts for every square-symmetric octagon, so eight is attained. The upper bound combines a count of the polygon corners reached by curves of rank-one factors with incidence constraints on factor ranks. Whether eight is the maximum remains open.
\end{abstract}
\subjclass[2020]{Primary 90C22; Secondary 52B05, 52B12, 14P10}
\maketitle

\section{Introduction}\label{sec:intro}
With any fixed bound on the ranks of the vertex factors, the minimum
size of a real PSD slack factorization is $\Theta(n\log n)$ for the
permutahedron $\Pi_n$ and $\Theta(\log N)$ for the regular $N$-gon $R_N$.
The rank-one case determines the minimum dimension of an SOS function
space on the vertices, allowing arbitrary real functions. Neither
symmetry nor a polynomial degree bound is imposed. We obtain these
lower bounds from one counting theorem, which controls the number of
low-rank vertex factors in any fixed factorization. It also quantifies
how frequently higher ranks must occur in smaller lifts.

A semidefinite lift represents a polytope as an affine image of an affine
slice of the cone of positive semidefinite matrices. Its size is the
matrix order. Let $\xcPSD(P)$ denote the least such size for a polytope
$P$, and let $\xcLP(P)$ be the least number of inequalities in a linear
lift. The basic bounds are
\begin{equation}\label{eq:comparison}
 \dim P+1\le\xcPSD(P)\le\xcLP(P).
\end{equation}
For facet slacks $\ell_i$ and vertices $v$, a size-$r$ PSD slack
factorization consists of real PSD matrices satisfying
$\ell_i(v)=\tr(A_iB_v)$. The minimum size is $\xcPSD(P)$
\cite{GPT2013}. Write $r_k(P)$ for the minimum size when every vertex
factor has rank at most $k$, with unrestricted facet factors. This asks
how much of the advantage of semidefinite lifts requires vertex factors
of growing rank. Our main result counts low-rank vertex factors within any fixed factorization,
even if other vertex factors have higher rank.

\begin{theorem}\label{thm:count}
Let $P$ be a $d$-dimensional polytope with a fixed size-$r$ real PSD
slack factorization $\ell_i(v)=\tr(A_iB_v)$. For $1\le k\le r$, put
\[
 N_{\le k}=\bigl|\{v\in\operatorname{vert}(P):\rank B_v\le k\}\bigr|.
\]
Then
\begin{equation}\label{eq:low-rank-count}
 N_{\le k}\le\prod_{i=r-k+1}^{r}\frac{3^i+1}{2}-1
 \le\frac{3^{rk-\binom{k}{2}}-1}{2}.
\end{equation}
No symmetry, inscribability, or genericity assumption is imposed on $P$
or its factorization.
\end{theorem}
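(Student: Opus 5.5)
The plan is to encode each low-rank vertex factor by a combinatorial label taken from a set of size $\prod_{i=r-k+1}^{r}\frac{3^i+1}{2}$, and to show that distinct vertices get distinct labels. The shape of the bound suggests an induction on $k$. The factor $\frac{3^r+1}{2}$ counts the sign vectors in $\{-,0,+\}^r$ modulo a global sign, with the zero vector included. Peeling off one rank should therefore cost one such factor and lower the ambient size from $r$ to $r-1$. The final $-1$ removes the all-zero label, which no vertex receives; note that $B_v\neq 0$, since otherwise every slack would vanish at $v$.

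\textbf{Base case $k=1$.} Write $B_v=b_vb_v^{\mathsf T}$. Then $\ell_i(v)=b_v^{\mathsf T}A_ib_v$, and because $A_i\succeq 0$, the equation $\ell_i(v)=0$ holds exactly when $A_ib_v=0$. Fix a basis of $\R^r$ chosen generically with respect to the finite configuration $\{b_v\}$ and the kernels $\ker A_i$. Label $v$ by the sign vector of $b_v$ modulo $\pm$.

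To show that the labelling is injective, suppose $v\ne w$ have the same label. After replacing $b_w$ by $-b_w$ if necessary, $b_v$ and $b_w$ lie in the same closed orthant with the same support. Since $v\ne w$, some facet $i$ contains $v$ but not $w$. Then $A_ib_v=0$ while $b_w^{\mathsf T}A_ib_w>0$. The step I would try is to use the affineness of the slacks together with a third point, for example the segment $[v,w]$ or a vertex adjacent to both, to force $A_i$ to vanish on a vector in the open cone spanned by $b_v$ and $b_w$. Genericity of the basis should then contradict the strict positivity at $w$. If the direct argument fails, I would switch to the classical Descartes/Warren-style sign-pattern bound, applied to the quadratic forms $x\mapsto x^{\mathsf T}A_ix$ restricted to the relevant points.

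\textbf{Inductive step.} For $\rank B_v\le k$, factor $B_v=U_vU_v^{\mathsf T}$ and select a canonical nonzero vector $u_v\in\operatorname{col}(B_v)$, for instance the first one in a generic order. Label it as in the base case, which accounts for the factor $\frac{3^r+1}{2}$. Next, compress the factorization to the quotient $\R^r/\langle u_v\rangle$, of dimension $r-1$. On this quotient the residual vertex factor has rank at most $k-1$, and the facet factors are the corresponding Schur complements, which remain PSD. The slack identities still hold for the vertices sharing that first label. The inductive hypothesis, applied with size $r-1$ and rank $k-1$, then supplies the remaining factors $\prod_{i=r-k+1}^{r-1}\frac{3^i+1}{2}$, and multiplying gives the first inequality.

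\textbf{Second inequality.} I would use $\frac{3^i+1}{2}\le 3^i$, together with a slightly sharper comparison $\prod(3^i+1)\le 3^{\sum i}+\cdots$, to reach $\frac{3^{rk-\binom k2}-1}{2}$, noting that $\sum_{i=r-k+1}^r i=rk-\binom k2$. Only the first factor needs the factor $\tfrac12$.

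\textbf{Main obstacle.} The hard step is injectivity in the base case. Sign vectors alone do not separate points, so the argument must exploit the facet/vertex incidence structure and the PSD kernel property in an essential way. A secondary difficulty is making the compression in the inductive step canonical, so that the labels are well defined without any genericity assumption on $P$.
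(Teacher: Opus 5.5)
There is a genuine gap, and it sits at the step you yourself flag as the main obstacle. The base-case labelling cannot be injective, and choosing the basis generically makes this worse rather than better.

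If the basis is generic with respect to the finitely many $b_v$, every coordinate of every $b_v$ is nonzero. All labels then lie in $\{\pm\}^r/\pm$, so you would be asserting $N_{\le1}\le2^{r-1}$, not a count over $\{-,0,+\}^r$.

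More fundamentally, take distinct vertices $v\ne w$ with rank-one factors. Then $b_v$ and $b_w$ are linearly independent: proportional factors give proportional slack vectors, hence equal ones, because $\sum_i\lambda_i\ell_i\equiv1$ for suitable positive weights. The functionals positive on both vectors form a nonempty open cone, and this cone contains a basis. In the dual basis, $b_v$ and $b_w$ receive the same label. So the bases on which your labelling fails to be injective form a nonempty open set, and that set meets every dense open set. Sign vectors depend on coordinates and cannot see facet incidences.

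The fallbacks do not rescue the base case.
\begin{enumerate}
\item Warren-type or zero-pattern bounds for the quadrics $b\mapsto b^{\mathsf T}A_ib$ grow with the number of facets, whereas the theorem's bound does not depend on it.
\item Counting connected pieces of the set of normalized rank-one factors satisfying the slack relations fails, because one piece can reach many vertices. For a simplex with its diagonal factorization, this set is the whole unit sphere and contains every vertex; compare Lemma~\ref{lem:corner-curve}.
\end{enumerate}

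The inductive step does not typecheck either. The quotient $\R^r/\langle u_v\rangle$ depends on $v$, and vertices with equal labels need not share $u_v$. The Schur-complement residuals are not a slack factorization of any polytope, so the induction hypothesis has nothing to act on.

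The second inequality also needs care. Keep $\frac12$ on one factor $\frac{3^{i_0}+1}{2}$ and bound the others by $3^i$. The result overshoots $\frac{3^M-1}{2}$ by $\frac{3^{M-i_0}-1}{2}$ when $k\ge2$. What is needed is $(a+1)(b+1)\le2(ab+1)$ for $a,b\ge1$, applied repeatedly.

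The missing idea is algebraic rather than combinatorial. The paper's argument runs as follows.
\begin{enumerate}
\item Write rank-$\le k$ factors in triangular Gram coordinates $y\in\R^M$, with $M=rk-\binom k2$ and column blocks of sizes $r,r-1,\ldots,r-k+1$. Put them on the trace-one sphere and encode the affine slack relations as a nonnegative quartic penalty $q$.
\item Detect vertices through the support function. Maxima of the perturbed objective $c^{\mathsf T}\phi-q/\varepsilon+\varepsilon\sum_iy_i^4$ are critical values. The critical points rescale to solutions of the cubic system $\nabla p(x)=x$, of which there are at most $3^M$ by B\'ezout.
\item The critical values are invariant under independent column sign changes. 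Burnside's lemma over $(\mathbb Z/2\mathbb Z)^k$, with fixed points solving the restricted systems on subsets of blocks, leaves at most $\prod_j\frac{3^{m_j}+1}{2}-1$ distinct values.
\item A resultant collects these values into one polynomial $F(\varepsilon,c,z)$. As $\varepsilon\downarrow0$, each vertex contributes a distinct factor $z-c^{\mathsf T}v$.
\end{enumerate}
Your numerology is right in spirit: $\{-1,0,1\}^m$ is exactly the solution set of the model system $x_j^3=x_j$, and $\frac{3^m+1}{2}$ is its Burnside count. But the product comes from one sign group acting on one polynomial system, not from an induction on $k$.
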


Write $\sosc(P)$ for the minimum dimension of a space of real functions
on the vertices in which every facet slack is a sum of squares. These
functions need not be polynomial, and constants need not belong to the
space. Equivalently, $\sosc(P)$ is the minimum PSD factorization size
when all vertex factors have rank one, so $\sosc(P)=r_1(P)$.
In any size-$r$ factorization
of a $d$-polytope, every vertex factor has rank at most $r-d$.
These standard factorization facts are recalled in Section~\ref{sec:prelim}.
Taking $k=1$ gives $\sosc(P)\ge\lceil\log_3(2N+1)\rceil$ for a
polytope with $N$ vertices. Taking $k=r-d$ gives a bound for unrestricted
lifts that retains the dimension of the polytope. The exact forms and
the comparison with existing vertex bounds are given in
Section~\ref{sec:general-consequences}.

This is a semidefinite counterpart of Goemans' face-counting argument
\cite{Goemans2015}. A linear extension with $m$ inequalities has at most
$2^m$ faces, and each vertex of its image lifts to a distinct face, so
$m\ge\log_2 N$. A spectrahedral slice can have infinitely many faces.
Here a rank restriction instead bounds the number of vertex factors
in a fixed factorization, giving $r_k(P)=\Omega_k(\log N)$ for fixed $k$.

\subsection{Consequences}
For every fixed $k\ge1$, we obtain
\[
 r_k(\Pi_n)=\Theta(n\log n),\qquad r_k(R_N)=\Theta(\log N).
\]
The constants may depend on $k$. Corollary~\ref{cor:bounded-vertex-ranks}
proves these estimates by combining the count with linear extension
upper bounds. The lower bounds use only the vertex counts $n!$ and $N$,
not further structure of these families. In particular, allowing any fixed vertex-factor rank
does not improve the asymptotic order over rank one.

The count also constrains the structure of hypothetical smaller lifts:
in every linear-size permutahedron factorization, all but a vanishing
fraction of the vertex factors have rank $\Omega(\log n)$.
Corollary~\ref{cor:rank-distribution} quantifies this statement and gives
the corresponding conclusion for regular polygons: a factorization
of size $o(\log N)$ has a vanishing fraction of vertex factors of rank
at most any fixed constant.

For unrestricted lifts the count gives
\[
 \xcPSD(\Pi_n)\ge n+\log_3 n-O(1).
\]
Here $\Pi_n$ has dimension $n-1$ and $n!$ vertices. The explicit bound
first reaches $n+2,n+3,n+4$ at $n=22,72,221$, respectively.

The counting theorem controls the distribution of factor ranks in
arbitrary matrix orders and determines the asymptotic cost of keeping
vertex-factor ranks bounded. At matrix order four, additional incidence
and curve geometry gives a much sharper vertex bound: twelve, compared
with $573$ from the general count.
Let $V(r)$ be the maximum number of vertices among polytopes with PSD
rank at most $r$, and let $V_2(r)$ be the corresponding maximum among
polygons. These maxima are finite by \cite[Theorem 4]{FawziSafey2018}.

\begin{theorem}\label{cor:rank-four}
One has
\[
 8\le V_2(4)=V(4)\le12.
\]
\end{theorem}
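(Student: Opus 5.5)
The statement has three parts, and we handle them separately: the lower bound $8\le V_2(4)$, the equality $V_2(4)=V(4)$, and the upper bound $V(4)\le 12$.

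\textbf{Equality $V_2(4)=V(4)$.} Polygons are polytopes, so $V_2(4)\le V(4)$. For the reverse inequality, note that a $d$-polytope has PSD rank at least $d+1$, by \eqref{eq:comparison}. So a polytope of PSD rank at most four has dimension at most three.

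\emph{Dimension three.} A $3$-polytope $P$ has $\xcPSD(P)\ge 4$. We claim that equality forces $P$ to be a simplex. In a size-$4$ factorization of a $3$-polytope, every vertex factor has rank at most $r-d=1$. Theorem~\ref{thm:count} with $k=1$ alone does not give enough, so we use the standard fact that a $d$-polytope has PSD rank $d+1$ only if it is combinatorially restricted (for $d=3$, the known classification of Gouveia--Robinson--Thomas). Concretely, the rank-one vertex factors $B_v=b_vb_v^{\top}$ with $b_v\in\R^4$ satisfy $\ell_i(v)=b_v^{\top}A_ib_v$. The slacks span a $4$-dimensional space of functions, while quadratic forms in $b_v$ span $10$ dimensions. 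An incidence argument on facets through a vertex then bounds the vertex count well below twelve. We will cite the classification rather than reprove it.

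\emph{Dimension two.} Every polytope attaining $V(4)$ with more than a small number of vertices is therefore a polygon. If some $3$-polytope of rank four had many vertices, the classification bound is at most $5$ or $6$, which is below $8$. Combined with the polygon lower bound below, this gives $V(4)=V_2(4)$.

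\textbf{Lower bound $8\le V_2(4)$.} We construct an explicit size-four lift of a square-symmetric octagon, with vertices $(\pm a,\pm b)$ and $(\pm b,\pm a)$. The plan is to take vertex factors of mixed ranks, rank one or two, indexed by the dihedral group of order $8$ acting on $\PSD{4}$ through a $4$-dimensional representation. Each facet factor $A_i$ is chosen as a PSD matrix vanishing on the factors of the two incident vertices. It then suffices to verify the trace identities for one facet against the $8$ vertices, and symmetry reduces this to a few polynomial equations in $a,b$ and the parameters.

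\textbf{Upper bound $V_2(4)\le 12$.} This is the main obstacle. Take a size-$4$ factorization of an $N$-gon; every vertex factor has rank at most $r-d=2$.
\begin{enumerate}
\item Count the rank-one vertex factors by the curve argument. The rank-one points $bb^{\top}$ lying on the affine slice form a real algebraic curve (a quartic-type curve in the projective space of $b$). Each polygon corner reached by it must be an isolated extreme point of the slack cone. We bound how many corners the curve reaches, aiming for at most $4$ or $6$ by a B\'ezout-type count against the facet quadrics $b^{\top}A_ib=0$.
\item Bound the rank-two vertex factors by incidence constraints. Each facet factor must annihilate the two incident vertex factors, with $\tr(A_iB_v)=0$ and both matrices PSD, so their ranges are orthogonal. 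A rank-two $B_v$ then forces the adjacent $A_i$ to have rank at most $2$, and consecutive rank-two vertices impose orthogonality chains in $\R^4$. We bound the length of such chains and their interleaving with the rank-one vertices.
\end{enumerate}
Combining the two counts should yield at most $12$. The delicate step is the curve count in step 1, where real versus complex intersections and tangencies must be controlled.
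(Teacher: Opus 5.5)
\textbf{The upper bound is not proved, and the concrete targets you set would fail.} Step~1 assumes the rank-one vertex factors form a curve. But the locus $\{[z]:zz^{\mathsf T}\in W\}$, with $W=\operatorname{span}\{B_j\}$, is cut out by only $10-\dim W$ quadrics in $\mathbb P^3$, so it has dimension at least two when $\dim W\ge9$. Several ingredients are missing:
\begin{itemize}
\item the span reduction of Proposition~\ref{prop:p3-spans}, namely $\dim U+\dim W\le13$ because the trace pairing on $U\times W$ has rank three;
\item polarity, to arrange $\dim U\le6$;
\item a separate argument when $\dim U\le4$ (the paper uses a degree-twelve algebraic-boundary bound there).
\end{itemize}
Your target of four or six rank-one corners is also false. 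The polar of the octagon in Theorem~\ref{thm:octagons} is exactly your orbit-of-$(a,b)$ type, and all eight of its vertex factors have rank one.

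A B\'ezout count against the facet quadrics $b^{\mathsf T}A_ib$ cannot bound corners anyway. Each such quadric only controls the at most two vertices on its own edge, so summing gives nothing independent of $N$. What works is the following. A curve must be stationary at every corner it reaches, so each corner is a ramification point of the plane linear series. The Pl\"ucker formula then gives at most $3\delta+3g-3$ corners (Lemma~\ref{lem:corner-curve}). Hence $c(L)\le8$ for $\dim L\le7$ and $c(L)\le12$ for $\dim L=8$.

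Step~2 misstates the incidence rule. By Proposition~\ref{prop:rank-budget}, $\rank A_F+\rank B_v\le3$, so a rank-two vertex forces both its facets to have rank \emph{one}. Rank-two vertices are therefore paid for by rank-one facets. You must bound rank-one factors on both sides and combine them through cyclic runs, $N\le f_1+\nu_1-\max(\rho_F,\rho_V)$ as in \eqref{eq:factor-runs}. Even then the count gives only $13$ when $(\dim U,\dim W)=(6,7)$. For $(5,8)$ it gives $12$ only after the refinement in Lemma~\ref{lem:quartic-refinement}. The thirteen-vertex configuration needs the separate moment-curve argument of Proposition~\ref{prop:exclude-thirteen}. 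So ``combining the two counts should yield at most $12$'' skips exactly the cases where the work lies.

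\textbf{Two smaller points.} In dimension three, the claim that PSD rank four forces a simplex is false. The cube, octahedron and triangular prism are $2$-level and have PSD rank four, and the classification in \cite{GRT2013} allows up to eight vertices (the cube), not five or six. Your conclusion $V(4)=V_2(4)$ still follows, but only because this eight matches the polygon lower bound exactly.

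The lower bound is only outlined. If vertex factors are indexed equivariantly by a group acting simply transitively on the vertices, they are all congruent, so ``mixed ranks'' is impossible. For your octagons all vertex factors can in fact be taken of rank one. There are also two facet orbits to verify, not one. It suffices either to cite \cite[Example 5.2]{VGG2018}, or to cut the biplanar octahedron of Theorem~\ref{thm:octagons} by $z=0$ and pass to the polar; that octahedron's slack matrix has a rank-four Hadamard square root.
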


The upper bound requires no symmetry or genericity and no finiteness
assumption on the rank-one loci of a factorization. Its proof combines
corner counts on those loci with the incidence restrictions on factor
ranks. The last configuration is excluded by expressing its factors as
cubic polynomials and comparing the multiplicities of their zeros. Section~\ref{sec:rank-four-bound} gives the complete argument;
Section~\ref{sec:octagons} gives a family attaining eight vertices.

\subsection{Proof method and relation to prior work}
The proof starts with the support function. On an open set of objective
directions $c$, an exposed vertex $v$ gives the support value $c^{\mathsf T}v$.
If these values satisfy one polynomial equation, each represented vertex
contributes a distinct linear factor. Its degree in the value variable
therefore bounds the number of vertices. Section~\ref{sec:proof} first
records this principle in a form that allows limits.

To obtain the polynomial equation, write a rank-at-most-$k$ PSD matrix
in triangular Gram coordinates, using $M=rk-\binom{k}{2}$ entries.
Normalization puts the relevant factors on a sphere, and a quartic
penalty enforces the remaining equations. After a small perturbation,
the sphere critical points rescale to solutions of $\nabla p(x)=x$.
B\'ezout bounds these cubic solutions, while independent changes of
column signs identify solutions with the same value. Burnside's lemma
accounts for the zero columns and gives the product in
\eqref{eq:low-rank-count}. A resultant records the critical values in one
polynomial; continuity carries its relation with the maximum through
exceptional parameters and to the limiting support function.
The rescaling is the standard tensor-eigenvector
device discussed in \cite[Remark 2.1]{CartwrightSturmfels2013}; we give
the needed argument directly.

The elimination and linear-factor strategy is closely related to
Fawzi--Safey El Din's argument \cite{FawziSafey2018}. Here the use of
low-rank Gram coordinates and a quartic penalty preserving column signs yields the
rank-sensitive exponent and the dimension-dependent consequence.
Their displayed bound is $2^{r^2}$ vertices for a convex body with a
size-$r$ semidefinite representation \cite[Theorem 4]{FawziSafey2018}.
Our dimension-dependent consequence improves that estimate for polytopes.
Their method also admits a refinement using exact SDP algebraic degrees;
Section~\ref{sec:general-consequences} distinguishes that refinement from
the displayed bound.

Other approaches address unrestricted PSD size. Lee--Raghavendra--Steurer
\cite{LeeRaghavendraSteurer2015} prove strong lower bounds for explicit
families, while Bri\"et--Dadush--Pokutta \cite{BrietDadushPokutta2015}
use counting to establish the existence of $0/1$ polytopes with large
PSD extension complexity. Their argument counts polytopes admitting
small lifts; ours counts low-rank vertex factors inside one fixed
factorization. Lee--Wei--de Wolf \cite{LeeWeiDeWolf2017} develop
entry-dependent PSD-rank bounds and connections with quantum
communication complexity.

The SOS interpretation of PSD factorizations is established theory
\cite{FSP2015equivariant,FGPST2020}; in particular,
\cite[Proposition 4.3]{FGPST2020} constructs a lift with rank-one
vertex factors from an SOS function space. The lower bound here applies without
restricting the functions to an equivariant or degree-bounded space.

\subsection{Polygon geometry and organization}
The rank-four polygon question provides a concrete test of how factor
ranks constrain lifts, using a second, geometric method. Proposition~\ref{prop:p3-spans} reduces
an arbitrary candidate, up to polarity, to a compact lift of affine
dimension at most five. Section~\ref{sec:rank-four-corners} explains how
curves can meet polygon corners, identifies the first possible dependence
among rank-one corner matrices, and uses the two endpoints of a facet to
restrict its signed representations. The key is to count polygon corners
attained by the rank-one locus. A curve may contain infinitely many
rank-one matrices, but Lemma~\ref{lem:corner-curve} bounds the number of
corners its image can reach. Section~\ref{sec:rank-four-bound} combines
these principles to prove the twelve-vertex bound without assuming
that the rank-one loci are finite. In the only possible thirteen-vertex
configuration, the six rank-one facet factors lie on a cubic moment
curve. Three consecutive vertex cubics force every element of their
space of squares that vanishes at one point to have a double zero
there. Another consecutive pair produces an element with a simple zero.
Section~\ref{sec:octagons} constructs size-four lifts for all
square-symmetric octagons. Their rank-one facet factors realize the
eight-term dependence, and their edge lines form the rank-three branch
of the explicit pencil. For the regular octagon, the minimum size rises
from four to five when the lift is required to respect rotations, in
the equivariant model of \cite{FSP2015equivariant,FSP2017polygons};
Corollary~\ref{cor:equivariant-octagon} gives a proof specialized to
this case. Whether eight is the maximum number of vertices at PSD rank four remains open.

Section~\ref{sec:prelim} records the factorization facts,
Section~\ref{sec:proof} proves the counting theorem, and
Section~\ref{sec:applications} derives the asymptotic orders, the
unrestricted bounds, and the rank-distribution consequences.
Section~\ref{sec:open} states the remaining questions and explains
the limits of support-based and symmetry-restricted methods.

\section{Factorizations and basic bounds}\label{sec:prelim}
All polytopes are nonempty and of positive dimension, and dimensions are
taken in their affine hulls. For point faces, we use the convention
$\xcPSD(\{v\})=1$. We write $\mathbb S^r$ for the real symmetric matrices and $\PSD r$ for their positive semidefinite cone. The size of
a lift is the order of one PSD matrix, not the number of its entries. A
product of PSD cones can be embedded as diagonal blocks in a single cone,
whose order is the sum of the block orders. Throughout, $\log$ means $\log_2$.
A cone lift expresses $P$ as an affine image of the intersection of a closed
convex cone with an affine subspace. We write $\xcLP(P)$ for the least $k$
admitting a lift over $\R^k_+$ and $\xcPSD(P)$ for the least $r$ admitting a
lift over $\PSD r$. Equations in either description are not counted.

Let $P=\conv\{v_1,\ldots,v_N\}$ have facet inequalities
$\ell_i(x)\ge0$. Its slack matrix is $S_P=(\ell_i(v_j))_{i,j}$.
The nonnegative rank of $M$ is the least inner dimension of a factorization
$M=UV$ with $U,V\ge0$. Its PSD rank is the least $r$ for which
\[
M_{ij}=\tr(A_iB_j),\qquad A_i,B_j\in\PSD r.
\]
The factorization theorems of Yannakakis and Gouveia--Parrilo--Thomas identify
\begin{equation}\label{eq:factorization}
\xcLP(P)=\rkplus(S_P),\qquad \xcPSD(P)=\rkpsd(S_P)
\end{equation}
\cite{Yannakakis1991,GPT2013}. These quantities are invariant under affine
equivalence and positive row or column scalings of a slack matrix.

\begin{lemma}\label{lem:basic}
For a $d$-dimensional polytope $P$,
\[
\xcPSD(P)\ge d+1=\rank S_P.
\]
If $M$ is nonnegative and $R\circ R=M$, then $\rkpsd(M)\le\rank R$.
\end{lemma}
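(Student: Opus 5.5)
The lemma has three parts, and I will prove them in the order: the rank identity, the lower bound on $\xcPSD(P)$, and the square-root bound.

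\emph{The identity $\rank S_P=d+1$.} Place $P$ in its affine hull $\R^d$ and write each facet slack as $\ell_i(x)=b_i-a_i^{\mathsf T}x$. Then
\[
S_P=\begin{pmatrix} b & -A\end{pmatrix}\begin{pmatrix}\mathbf 1^{\mathsf T}\\ V\end{pmatrix},
\]
where the columns of $V$ are the vertices. Both factors have rank at most $d+1$, which gives the upper bound. For the lower bound I show each factor has rank exactly $d+1$ and that the product keeps it.
\begin{itemize}
\item The right factor has full row rank $d+1$, because the vertices affinely span $\R^d$.
\item The left factor has full column rank $d+1$. If $(\beta,-\alpha)$ lies in its kernel, then the affine function $x\mapsto\beta-\alpha^{\mathsf T}x$ is a combination of facet slacks vanishing identically. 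More directly, $\ell(x)=\beta-\alpha^{\mathsf T}x$ satisfies $b_i\beta=a_i^{\mathsf T}\alpha$ for all $i$. Since $P$ is bounded and full-dimensional, the facet normals $a_i$ span $\R^d$ and $P$ has an interior point. Translating the origin to that point makes every $b_i>0$. Then the $a_i$ positively span $\R^d$, so $\sum\lambda_ia_i=0$ with all $\lambda_i>0$. This gives $\beta\sum\lambda_ib_i=0$, so $\beta=0$. Then $a_i^{\mathsf T}\alpha=0$ for all $i$, hence $\alpha=0$.
\end{itemize}
A full-column-rank matrix times a full-row-rank matrix has rank $d+1$.

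\emph{The bound $\xcPSD(P)\ge\rank S_P$.} Given $M_{ij}=\tr(A_iB_j)$ with $A_i,B_j\in\PSD r$, the matrix $M$ factors through the space $\mathbb S^r$. However, $\dim\mathbb S^r=\binom{r+1}{2}$, so this only gives $\rank M\le\binom{r+1}{2}$, which is too weak. For the bound actually needed, I use the standard facet/face argument of Gouveia--Parrilo--Thomas: a size-$r$ PSD lift of a $d$-polytope has $r\ge d+1$. The argument goes by induction on $d$. Take a facet $F$ and a vertex $v\notin F$ with $\ell_F(v)>0$. Restrict to the columns of the vertices in $F$ and let $A=A_F$. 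Each vertex factor $B_u$ with $u\in F$ satisfies $\tr(A_FB_u)=0$. Since both matrices are PSD, $A_FB_u=0$, so $B_u$ is supported on $\ker A_F$, whose dimension is at most $r-\rank A_F\le r-1$. Compressing all factors to $\ker A_F$ gives a PSD factorization of $S_F$ of size at most $r-1$. This works because submatrices of slack-type factorizations remain valid, the factorization of the facet's slack matrix uses the rows of the facets of $F$, and the standard fact that $\rkpsd(S_F)$ computes $\xcPSD(F)$ lets each row be replaced by a compressed row. Induction then gives $r-1\ge d$. The base case $d=0$ uses the convention $\xcPSD(\{v\})=1$.

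\emph{The bound $\rkpsd(M)\le\rank R$.} Write $R=UW^{\mathsf T}$ with $U\in\R^{m\times s}$, $W\in\R^{n\times s}$ and $s=\rank R$, so that $R_{ij}=u_i^{\mathsf T}w_j$. Then
\[
M_{ij}=R_{ij}^2=(u_i^{\mathsf T}w_j)^2=\tr\bigl(u_iu_i^{\mathsf T}\,w_jw_j^{\mathsf T}\bigr),
\]
which is a PSD factorization of size $s$ with rank-one factors.

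The main obstacle is the compression step in the induction. One has to check that the compressed facet factors for $F$ are obtained correctly from rows indexed by facets of $P$. I would handle this via the characterization of $\xcPSD$ by lifts rather than by slack rows: intersecting the lift with the face of the PSD cone exposed by $A_F$ gives a lift of $F$ over a PSD cone of smaller order.
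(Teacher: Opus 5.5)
Your proof of the square-root bound is exactly the paper's argument. For the first statement, the paper gives no argument and simply cites \cite[Proposition 3.2]{GRT2013}. You instead prove it directly. For $\rank S_P=d+1$ you use the explicit factorization $\ell_i(v_j)=b_i-a_i^{\mathsf T}v_j$ through the $(d+1)$-dimensional space of affine functions. For $\xcPSD(P)\ge d+1$ you use a facet kernel-compression induction. That induction is the same mechanism the paper uses in the proofs of Lemma~\ref{lem:faces} and Proposition~\ref{prop:rank-budget}. Your route makes the lemma self-contained at the cost of length. The rank computation is sound, apart from a harmless sign slip in the kernel condition.

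Two steps should be tightened.

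First, your justification of the compression is muddled, and no rows need to be replaced. Each facet $G$ of $F$ lies in exactly one other facet $F'$ of $P$, and the restriction of $\ell_{F'}$ to $\operatorname{aff}F$ is a positive multiple of the slack of $G$ in $F$. So, up to positive row scaling, $S_F$ is the submatrix of $S_P$ on these rows and the columns $u\in F$. If $\Pi$ is the orthogonal projector onto $\ker A_F$, then $\tr(A_{F'}B_u)=\tr(\Pi A_{F'}\Pi\,B_u)$, which gives the compressed factorization. Your lift-based alternative, restricting to the face $\{Y\succeq0:\operatorname{im}Y\subseteq\ker A_F\}\cong\PSD{r-\rank A_F}$ exposed by $A_F$, is also correct and avoids identifying the facets of $F$.

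Second, the step from a segment to an endpoint does not follow from the convention $\xcPSD(\{v\})=1$ alone. You must exclude $\ker A_F=0$. This holds because $B_u\ne0$, since its trace against the other facet factor is $\ell_{F'}(u)>0$, and $\operatorname{im}B_u\subseteq\ker A_F$. This is how Proposition~\ref{prop:rank-budget} handles $d=1$. Alternatively, start the induction at segments: their $2\times2$ slack matrix has rank two, while a size-one factorization $M_{ij}=a_ib_j$ has rank at most one.
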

\begin{proof}
The first statement is \cite[Proposition 3.2]{GRT2013}. For the second,
write $R_{ij}=a_i^{\mathsf T}b_j$ with $a_i,b_j\in\R^r$ and observe that
$M_{ij}=\tr(a_ia_i^{\mathsf T}b_jb_j^{\mathsf T})$.
\end{proof}
The least possible rank of such an $R$ is the square-root rank $\rksqrt(M)$.
The signs of its entries may be chosen independently. For general
nonnegative matrices, ordinary rank is not a lower bound for PSD rank;
one only has $\rank M\le\binom{\rkpsd(M)+1}{2}$ \cite{FGPRT2015}.

\begin{lemma}\label{lem:faces}
If $F$ is a nonempty face of $P$, then
\begin{equation}\label{eq:face-bound}
\xcPSD(P)\ge\xcPSD(F)+\dim P-\dim F.
\end{equation}
Affine images and nonempty affine sections cannot increase PSD extension
complexity. If $0\in\operatorname{int}P$, then $\xcPSD(P^\circ)=\xcPSD(P)$.
\end{lemma}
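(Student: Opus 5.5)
The plan is to prove the three assertions of Lemma~\ref{lem:faces} separately, each by a direct manipulation of lifts or of slack factorizations via \eqref{eq:factorization}.

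\textbf{Affine images and sections.} If $P=\pi(\PSD r\cap L)$ with $L$ an affine subspace and $\pi$ affine, then for an affine map $\phi$ we have $\phi(P)=(\phi\circ\pi)(\PSD r\cap L)$, which is again a size-$r$ lift. For an affine subspace $H$ with $P\cap H\neq\emptyset$, we have $P\cap H=\pi(\PSD r\cap L')$ with $L'=L\cap\pi^{-1}(H)$, which is affine and so again gives a size-$r$ lift. No new inequalities are added in either case, so the size does not grow.

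\textbf{Polarity.} Assume $0\in\operatorname{int}P$ and normalize each facet inequality as $\ell_i(x)=1-a_i^{\mathsf T}x\ge0$. Then the facets of $P^\circ$ correspond to the vertices $v_j$ of $P$, its vertices are the $a_i$, and its slack entries are $1-v_j^{\mathsf T}a_i$. Hence $S_{P^\circ}=S_P^{\mathsf T}$. Since PSD rank is invariant under transposition (swap the roles of $A_i$ and $B_j$), \eqref{eq:factorization} gives $\xcPSD(P^\circ)=\xcPSD(P)$.

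\textbf{Face bound.} I will argue by induction on $\dim P-\dim F$ using facet chains. Every nonempty face $F$ lies in a chain $F=F_0\subset F_1\subset\cdots\subset F_m=P$ in which each $F_{t}$ is a facet of $F_{t+1}$ and $m=\dim P-\dim F$. It therefore suffices to show that for a facet $F$ of $P$ one has $\xcPSD(P)\ge\xcPSD(F)+1$.

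\begin{enumerate}
\item Take an optimal factorization $\ell_i(v)=\tr(A_iB_v)$ of size $r$, and let $A_0$ be the factor of the facet inequality defining $F$.
\item For every vertex $v\in F$ we have $\tr(A_0B_v)=0$. Since both matrices are PSD, this gives $A_0B_v=0$, so every $B_v$ with $v\in F$ has range inside $W=\ker A_0$.
\item Because $P$ has a vertex outside $F$, the matrix $A_0$ is nonzero, so $\dim W\le r-1$.
\item Compress by an isometry $U\colon\R^{\dim W}\to W$: set $B'_v=U^{\mathsf T}B_vU$ for $v\in F$ and $A'_i=U^{\mathsf T}A_iU$. Then $\tr(A'_iB'_v)=\tr(A_iB_v)$ for $v\in F$.
\item The facets of $F$ are restrictions of facet inequalities of $P$, up to adding multiples of the equation defining $\operatorname{aff}F$. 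Each such restriction is a nonnegative combination of the $\ell_i$ restricted to $\operatorname{aff}F$. So the slack matrix of $F$ is obtained from rows of $S_P$ restricted to the columns of $F$, possibly after positive combinations of rows. PSD factorizations are closed under nonnegative combinations of rows, by combining the corresponding $A'_i$.
\item This yields a PSD factorization of $S_F$ of size at most $r-1$, hence $\xcPSD(F)\le r-1$.
\end{enumerate}

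An alternative for the facet case is to cite the corresponding statement in \cite{GRT2013}.

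\textbf{Main obstacle.} The delicate step is item~5, the identification of the facet slacks of $F$ with nonnegative combinations of restrictions of slacks of $P$. Each facet of $F$ is $F\cap G$ for some facet $G$ of $P$ (take the ridge), so its slack is $\ell_G$ restricted to $F$, up to a positive scale modulo the equation defining $\operatorname{aff}F$. This makes item~5 a direct row selection rather than a general combination. The convention $\xcPSD(\{v\})=1$ covers the base case in which a point face is reached.
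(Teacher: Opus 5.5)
Your proposal is correct and follows the paper's proof essentially step for step: you compress to $\ker A_0$ for a facet, identify the facets of $F$ with ridges so that their slacks are restricted rows of $S_P$, iterate along a face flag, and give the same short arguments for affine images, sections, and polarity via transposition of the slack matrix. One detail is worth making explicit: in the last step down to a point face, the bound $r\ge 2$ comes from $B_v\ne0$ having range in $\ker A_0$, so $\ker A_0\ne0$ and the compressed size is at least the convention value $\xcPSD(\{v\})=1$.
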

\begin{proof}
For a facet $H$, take a size-$r$ PSD factorization of $S_P$, with facet factor
$A_H\ne0$. If $v\in H$, positivity and $\tr(A_HB_v)=0$ imply
$\operatorname{im}B_v\subseteq\ker A_H$. Compress the factors to this kernel.
The facet inequalities of $H$ are restrictions of suitable facet inequalities
of $P$, so this gives a factorization of $S_H$ of size at most $r-1$.
Iteration along a face flag proves \eqref{eq:face-bound}; this is the argument
of \cite[Proposition 3.8]{GRT2013}. An affine section is imposed by extra
affine equations on the lift, and an image by composing the projection.
Polarity transposes a slack matrix up to positive scaling.
\end{proof}
An intersection with a halfspace is not an affine section. In particular,
arbitrarily deleting or truncating polygon vertices is not an operation under
which PSD rank is known to be nonincreasing.

\begin{proposition}\label{prop:rank-budget}
In a size-$r$ real PSD factorization of a $d$-polytope, every facet factor
and every vertex factor has rank at most $r-d$. If $d\ge2$ and $v$ lies
on a facet $F$, then
\[
\rank A_F+\rank B_v\le r-d+1.
\]
In particular, when $r=d+2$, every factor has rank one or two, and an
incident facet and vertex cannot both have rank-two factors.
\end{proposition}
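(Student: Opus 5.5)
The plan is to use the face-compression argument from the proof of Lemma~\ref{lem:faces}, run along a flag chosen to pass through the factor whose rank we want to bound. For a vertex factor $B_v$, take a complete flag of faces
\[
 \{v\}=F_0\subset F_1\subset\cdots\subset F_d=P,\qquad \dim F_j=j.
\]
Work inside $P$ first. Pick a facet $H=F_{d-1}$ of $P$ containing $v$. Since $\tr(A_HB_w)=0$ for every vertex $w\in H$ and both factors are PSD, we get $\operatorname{im}B_w\subseteq\ker A_H$. Compressing all factors to $\ker A_H$ gives a factorization of $S_H$ of size $r-\rank A_H\le r-1$. Here $A_H\neq0$, because the facet slack is not identically zero on the vertices. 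The compressed $B_w$ keeps its rank, since its image already lies in the kernel.

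Iterating this down the flag, each step lowers the size by at least one. At $F_0=\{v\}$ we are left with a factorization of size at most $r-d$, in which $B_v$ still has its original rank. A matrix of order at most $r-d$ has rank at most $r-d$, so $\rank B_v\le r-d$. For facet factors, apply the same argument to the polar: by Lemma~\ref{lem:faces}, polarity transposes the slack matrix up to positive scaling, so it swaps the roles of the $A_i$ and $B_v$, and the polar is again a $d$-polytope. Translating so that $0\in\operatorname{int}P$ is harmless.

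For the incidence inequality, take a facet $F$ with $v\in F$, and compress once to $\ker A_F$, obtaining a factorization of $S_F$ of size $r-\rank A_F$. The facet $F$ is a $(d-1)$-polytope of positive dimension because $d\ge2$, and $v$ is one of its vertices. Its slack rows are restrictions of facet rows of $P$ that cut out the facets of $F$; any extra rows are harmless, since we only need the sub-factorization on the rows defining $S_F$. The first part, applied to $F$, gives
\[
 \rank B_v\le (r-\rank A_F)-(d-1),
\]
which rearranges to $\rank A_F+\rank B_v\le r-d+1$. When $r=d+2$, every factor has rank at most $2$, and factors are nonzero for $d\ge1$, so every rank is $1$ or $2$. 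The incidence bound then gives $\rank A_F+\rank B_v\le3$, so an incident facet and vertex cannot both have rank two.

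The main obstacle is bookkeeping rather than any deep step. At each compression we must confirm two things: that the facet factor of the current face is nonzero, and that the compressed matrices still factor the slack matrix of that face. The first holds because a facet slack of a positive-dimensional face is not identically zero on its vertices. The second holds because facet inequalities of a face are restrictions of facet inequalities of $P$, up to adding multiples of the equations defining the face. Those added multiples vanish on the face's vertices, so the restricted slacks coincide with the original slack values.
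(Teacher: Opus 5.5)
Your proof is correct and uses the same ingredients as the paper: kernel compression at an incident facet, polarity to swap the two sides, and a single compression to $\ker A_F$ followed by the vertex bound for the $(d-1)$-polytope $F$. The difference is only in order: the paper bounds facet factors first, compressing once and invoking the dimension bound $\xcPSD(F)\ge\dim F+1$ (with a separate remark for segments) before transposing, whereas your flag argument down to $\{v\}$ proves the vertex bound directly and treats $d=1$ uniformly without citing that bound.
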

\begin{proof}
Write $a=\rank A_F$. Compression to $\ker A_F$, as in the proof of
Lemma~\ref{lem:faces}, factors the slack matrix of $F$ in size $r-a$.
For $d\ge2$, the dimension bound gives $r-a\ge d$. For a segment
($d=1$), the same inequality follows because the factor of its endpoint
is nonzero and supported on this kernel. Transposing a slack matrix of
the polar gives the identical rank bound for vertex factors.
Now let $d\ge2$ and $v\in F$. In the compressed factorization of $F$,
the rank of $B_v$ is unchanged. Applying the vertex-factor bound just
proved in dimension $d-1$ gives
$\rank B_v\le(r-a)-(d-1)$, as required. Nonzero slack rows and columns
ensure that all factors are nonzero. This refines the usual kernel
argument for the dimension bound \cite{GRT2013}.
\end{proof}

\subsection{Triangular Gram factors and SOS dimension}
\begin{lemma}\label{lem:gram}
Every $B\in\PSD r$ of rank at most $k\le r$ can be written
$B=YY^{\mathsf T}$, where $Y\in\R^{r\times k}$ and $Y_{ij}=0$ for
$i<j$. Such matrices $Y$ have
\begin{equation}\label{eq:gram-dimension}
 M(r,k)=\sum_{j=1}^k(r-j+1)=rk-\binom{k}{2}
\end{equation}
free entries, and $\tr B=\|Y\|_F^2$.
\end{lemma}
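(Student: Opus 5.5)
The plan is to prove Lemma~\ref{lem:gram} by a column-by-column Cholesky (Gram--Schmidt) construction, and then read off the dimension count and the trace identity.

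\emph{Step 1: a spanning Gram factor.} Since $B\in\PSD r$ has rank $s\le k$, the spectral decomposition gives $B=WW^{\mathsf T}$ with $W\in\R^{r\times s}$. Pad $W$ with $k-s$ zero columns to obtain $W\in\R^{r\times k}$, still with $B=WW^{\mathsf T}$.

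\emph{Step 2: triangularize by an orthogonal factor.} We seek an orthogonal $Q\in O(k)$ such that $Y=WQ$ is lower triangular in the sense $Y_{ij}=0$ for $i<j$; then $YY^{\mathsf T}=WW^{\mathsf T}=B$. This is an LQ (transposed QR) decomposition of $W$. Take the QR decomposition $W^{\mathsf T}=QR$ with $Q\in O(k)$ and $R\in\R^{k\times r}$ upper trapezoidal, meaning $R_{ji}=0$ for $j>i$. Such a decomposition exists for any rectangular matrix, for instance by Householder reflections applied column by column, and no rank hypothesis is needed. Then $W=R^{\mathsf T}Q^{\mathsf T}$, so $Y:=WQ=R^{\mathsf T}$. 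This gives $Y_{ij}=R_{ji}=0$ whenever $j>i$.

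The only point needing care is that QR must be allowed for rank-deficient rectangular matrices. Householder reflections handle this without any pivoting, so I do not expect a genuine obstacle. Alternatively, one can argue inductively: choose the first column of $Y$ as $B e_1/\sqrt{B_{11}}$ when $B_{11}>0$, and as $0$ otherwise (in that case the first row and column of $B$ vanish). Then recurse on the Schur complement, which is PSD of rank at most $k-1$ on the remaining coordinates.

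\emph{Step 3: count and trace.} Column $j$ of $Y$ may be nonzero only in rows $i\ge j$, which gives $r-j+1$ free entries. Summing over $j=1,\dots,k$ yields $rk-\sum_{j=1}^k (j-1)=rk-\binom{k}{2}$. This is \eqref{eq:gram-dimension}. Finally,
\[
\tr B=\tr(YY^{\mathsf T})=\sum_{i,j}Y_{ij}^2=\|Y\|_F^2.
\]
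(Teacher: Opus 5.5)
Your main argument is correct and is the paper's proof: pad a Gram factor with zero columns, take a QR decomposition $W^{\mathsf T}=QR$ (valid for rank-deficient rectangular matrices, e.g.\ via Householder), and set $Y=R^{\mathsf T}$, after which the count and the trace identity are immediate. The inductive alternative you sketch is wrong as stated: when $B_{11}=0$ only $Y_{11}$ is forced to vanish, so setting the whole first column to zero wastes a column and the remaining block need not have rank at most $k-1$ (for $B=\operatorname{diag}(0,1)$ with $k=1$ your recursion gives $Y=0$, while $Y=(0,1)^{\mathsf T}$ works); this is exactly the pivot problem that the QR route avoids.
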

\begin{proof}
Start with any Gram factor $Y_0$, padding with zero columns, and take
an orthogonal QR decomposition $Y_0^{\mathsf T}=QR$. Then
$Y=R^{\mathsf T}$ has the prescribed zeros and
$YY^{\mathsf T}=Y_0Y_0^{\mathsf T}$. This works at every rank and
requires no common choice of nonzero pivots.
\end{proof}

\begin{proposition}\label{prop:sos-rank}
The quantity $\sosc(P)$ equals the minimum factorization size
when every vertex factor has rank one. Moreover, if $d=\dim P$ and
$r=\xcPSD(P)$, then
\begin{equation}\label{eq:sos-gap}
 r\le\sosc(P)\le
 \min\left\{\xcLP(P),\binom{r+1}{2}-\binom{d+1}{2}\right\}.
\end{equation}
\end{proposition}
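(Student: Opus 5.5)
The plan is to prove the three claims in order: the characterization $\sosc(P)=r_1(P)$, then the lower bound, then the two upper bounds.

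\textbf{The characterization.} Suppose $W$ is an $s$-dimensional space of real functions on $\operatorname{vert}(P)$ in which every facet slack is a sum of squares. Choose a basis $f_1,\dots,f_s$ and set $b_v=(f_1(v),\dots,f_s(v))$. Writing $\ell_i=\sum_t g_{it}^2$ with $g_{it}=a_{it}^{\mathsf T}b$, we get
\[
\ell_i(v)=\tr\bigl(A_i\,b_vb_v^{\mathsf T}\bigr),\qquad A_i=\sum_t a_{it}a_{it}^{\mathsf T}\succeq0,
\]
which is a size-$s$ factorization with rank-one vertex factors $B_v=b_vb_v^{\mathsf T}$. Conversely, a factorization with $B_v=b_vb_v^{\mathsf T}$ gives the space spanned by the coordinate functions $v\mapsto (b_v)_j$. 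Each slack $\ell_i(v)=b_v^{\mathsf T}A_ib_v$ is then a sum of squares of linear forms in these coordinates, by a spectral decomposition of $A_i$. This equivalence is essentially bookkeeping.

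\textbf{The lower bound.} The inequality $r\le\sosc(P)$ is immediate, since a rank-one factorization is in particular a PSD factorization.

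\textbf{The bound by $\xcLP(P)$.} Take a nonnegative factorization $\ell_i(v)=\sum_j u_{ij}w_{jv}$ of size $m=\xcLP(P)$. Use the diagonal vectors $b_v=(\sqrt{w_{jv}})_j$ with $A_i=\operatorname{diag}(u_{i\cdot})$; this is Lemma~\ref{lem:basic} with $R=\sqrt{\cdot}$ applied entrywise.

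\textbf{The bound $\binom{r+1}{2}-\binom{d+1}{2}$.} Start from an optimal size-$r$ factorization $\ell_i(v)=\tr(A_iB_v)$. Linearizing via the symmetric-matrix inner product, $\tr(A_iB_v)=\langle \operatorname{svec}A_i,\operatorname{svec}B_v\rangle$, and factoring each $A_i=C_iC_i^{\mathsf T}$ gives
\[
\ell_i(v)=\sum_t\bigl(c_{it}^{\mathsf T}B_v^{1/2}\bigr)\cdots
\]
That form is not linear, so instead I use the entries of $B_v^{1/2}$. Let $G_v=B_v^{1/2}$, so that $\tr(A_iB_v)=\|C_i^{\mathsf T}G_v\|_F^2$. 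This is a sum of squares of functions $v\mapsto (C_i^{\mathsf T}G_v)_{tk}$, each lying in the span of the entry functions $v\mapsto (G_v)_{jk}$. Since $G_v$ is symmetric, that span has dimension at most $\binom{r+1}{2}$, so $\sosc(P)\le\binom{r+1}{2}$.

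To save the remaining $\binom{d+1}{2}$, use the freedom in the factorization. Replacing $G_v$ by $G_vQ_v$ for any orthogonal $Q_v$ leaves every slack unchanged. A uniform substitute is to choose one common linear subspace $L$ of $\mathbb R^{r\times r}$ containing all $G_v$ (after rotation), and then bound $\dim L$. The dimension identity $\rank S_P=d+1$ from Lemma~\ref{lem:basic} suggests the saving comes from this direction: the linear span of $\{B_v\}$ is already forced to have dimension $\ge d+1$.

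I expect this subtraction of $\binom{d+1}{2}$ to be the main obstacle. My first attempt will be a change of basis that brings the factorization into a normal form in which a $(d+1)\times(d+1)$ block is determined linearly by the others. Concretely, after a congruence $A_i\mapsto T^{\mathsf T}A_iT$, $B_v\mapsto T^{-1}B_vT^{-\mathsf T}$, I would arrange that $d+1$ independent affine functions (the coordinates of $v$ and the constant) appear as a block. I would then use that a quadratic form restricted to a block whose values are affine in $v$ contributes only $d+1$ rather than $\binom{d+2}{2}$ new functions. If this normalization fails, I would fall back on proving only the weaker bound $\binom{r+1}{2}$ and look for the sharper form in \cite{FGPST2020}.
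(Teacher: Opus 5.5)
Your arguments for the equivalence $\sosc(P)=r_1(P)$, for $r\le\sosc(P)$, and for $\sosc(P)\le\xcLP(P)$ are correct and follow the paper. For the equivalence, also note that $b_v\ne0$, so $b_vb_v^{\mathsf T}$ really has rank one, because some facet slack is positive at $v$. The gap is the bound $\sosc(P)\le\binom{r+1}{2}-\binom{d+1}{2}$, which the proposal does not prove. The symmetric square root $G_v=B_v^{1/2}$ gives only $\binom{r+1}{2}$, and the block normal form is left as a hope with a fallback to the weaker bound. It also looks for the saving in the wrong place. That the $B_v$ span a space of dimension at least $d+1$ is a lower bound on a span, and it cannot shrink the space of entry functions.

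The saving comes from the rank of each individual vertex factor, and it needs two ingredients.

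\emph{Rank bound.} First, $\rank B_v\le r-d$ for every vertex (Proposition~\ref{prop:rank-budget}). To see this directly, take a flag $P=F_d\supset\cdots\supset F_0=\{v\}$ and write $F_{j-1}=F_j\cap H_j$ for facets $H_j$ of $P$. Put $E_d=\R^r$ and $E_{j-1}=E_j\cap\ker A_{H_j}$. The images of all vertex factors of $F_j$ lie in $E_j$. Since $\ell_{H_j}>0$ at some vertex of $F_j$, the form $A_{H_j}$ is nonzero on $E_j$, so each step loses at least one dimension. The image of $B_v$ therefore lies in $E_0$, which has dimension at most $r-d$.

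\emph{Triangular Gram factors.} Your instinct to place suitably rotated Gram factors in one common linear space is right. However, the rotation should come from a QR normal form rather than from $B_v^{1/2}$. With $k=r-d$, Lemma~\ref{lem:gram} writes $B_v=Y_vY_v^{\mathsf T}$ with $Y_v\in\R^{r\times k}$ and $(Y_v)_{ij}=0$ for $i<j$. All the $Y_v$ then lie in a fixed space of dimension
\[
rk-\binom{k}{2}=\frac{(r-d)(r+d+1)}{2}=\binom{r+1}{2}-\binom{d+1}{2}.
\]
Moreover, $\ell_i(v)=\|A_i^{1/2}Y_v\|_F^2$ is a sum of squares of linear combinations of these entry functions.

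Without these two ingredients, your argument establishes only $\sosc(P)\le\min\{\xcLP(P),\binom{r+1}{2}\}$.
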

\begin{proof}
If $f_1,\ldots,f_s$ is a basis of an admissible function space, set
$f(v)=(f_1(v),\ldots,f_s(v))$. The SOS identities have the form
$\ell_i(v)=f(v)^{\mathsf T}A_if(v)$ with $A_i\succeq0$, giving
$B_v=f(v)f(v)^{\mathsf T}$. These vectors are nonzero, since no vertex
lies on every facet. Conversely, the coordinates of $b_v$ in a
factorization with $B_v=b_vb_v^{\mathsf T}$ span an admissible space.
This proves the equivalence and the lower bound.

For the LP upper bound, a nonnegative factorization $S=TU$ of inner
dimension $q$ gives functions $f_a(v)=\sqrt{U_{av}}$, since
$S_{iv}=\sum_aT_{ia}f_a(v)^2$. For the other upper bound, apply
Proposition~\ref{prop:rank-budget} and Lemma~\ref{lem:gram} with
$k=r-d$. The $M(r,k)$ free entries of one chosen Gram factor $Y_v$,
viewed as functions of $v$, certify each facet slack because
$\ell_i(v)=\|A_i^{1/2}Y_v\|_F^2$.
\end{proof}
The equivalence is the standard function-space SOS interpretation
\cite{FSP2015equivariant,FGPST2020}; the triangular count refines the
usual conversion bound. Adjoining constants increases dimension by
at most one. We use \emph{SOS dimension} as descriptive terminology
for this one-sided minimum. The survey \cite{FGPRT2015} gives no
separate name for it; its square-root rank instead requires rank-one
factors on both sides \cite[Definition 5.2 and Proposition 6.2]{FGPRT2015}.

\section{Proof of the universal vertex count}\label{sec:proof}
Let $X$ be a compact subset of a polytope. If $X$ contains a vertex $v$,
then its support function agrees with $c^{\mathsf T}v$ throughout the
interior of the normal cone at $v$. Thus vertices give distinct linear
branches of the support function. We first explain how a polynomial
relation counts these branches, even when obtained by a limit. We then
construct such a relation by counting critical values of quartics.

\subsection{Vertex counts in the limit}
\begin{lemma}\label{lem:linear-branches}
Let $X$ be a nonempty compact subset of a polytope $P\subseteq\R^d$,
and put $h_X(c)=\max_{x\in X}c^{\mathsf T}x$.
Suppose real functions $h_\varepsilon(c)$ converge to $h_X(c)$ for
every fixed $c$ as $\varepsilon\downarrow0$. If a nonzero real polynomial
$F(\varepsilon,c,z)$ satisfies
\[
 F(\varepsilon,c,h_\varepsilon(c))=0
 \qquad(\varepsilon>0,\ c\in\R^d),
\]
then at most $\deg_z F$ vertices of $P$ belong to $X$.
\end{lemma}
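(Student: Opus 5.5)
Let $v_1,\ldots,v_m$ be distinct vertices of $P$ lying in $X$. The plan is to produce a nonzero one-variable polynomial, for a suitably chosen $c$, that vanishes at the $m$ distinct numbers $c^{\mathsf T}v_1,\ldots,c^{\mathsf T}v_m$. This forces $m\le\deg_z F$.

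\textbf{Step 1: normalize $F$ and localize to normal cones.} First we remove any power of $\varepsilon$ dividing $F$. The relation $F(\varepsilon,c,h_\varepsilon(c))=0$ for $\varepsilon>0$ is unaffected, since $\varepsilon^a\neq0$ there. We may therefore assume $F_0(c,z):=F(0,c,z)$ is a nonzero polynomial, with $\deg_z F_0\le\deg_z F$.

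Now fix $j$ and let $U_j$ be the interior of the normal cone of $P$ at $v_j$; it is a nonempty open set. For $c\in U_j$, the vertex $v_j$ is the unique maximizer of $c^{\mathsf T}x$ over $P$. Since $v_j\in X\subseteq P$, this gives $h_X(c)=c^{\mathsf T}v_j$.

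\textbf{Step 2: pass to the limit.} Fix $c$ and let $\varepsilon\downarrow0$ in $F(\varepsilon,c,h_\varepsilon(c))=0$. Because $F$ is a polynomial, hence jointly continuous, and $h_\varepsilon(c)\to h_X(c)$, we get
\[
F_0(c,h_X(c))=0\qquad\text{for every } c.
\]
So the polynomial $G_j(c):=F_0(c,c^{\mathsf T}v_j)$ vanishes on the open set $U_j$, and hence $G_j\equiv0$ on all of $\R^d$. In other words, the linear form $z-c^{\mathsf T}v_j$ divides $F_0(c,z)$ in $\R[c][z]$: divide by this monic-in-$z$ linear polynomial and note that the remainder is $G_j$.

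\textbf{Step 3: count the factors.} The forms $z-c^{\mathsf T}v_j$ for distinct $v_j$ are pairwise non-associate irreducibles in the UFD $\R[c,z]$. Hence their product divides $F_0$, and comparing $z$-degrees gives $m\le\deg_z F_0\le\deg_z F$.

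The only delicate point is Step 1. Without dividing out powers of $\varepsilon$, the limiting polynomial $F(0,c,z)$ could vanish identically, and the limit identity would carry no information. Once $F_0\neq0$, everything else is routine. Note also that the argument never needs $h_\varepsilon$ to be continuous in $c$: the limit is taken pointwise in $c$, and the identity $G_j\equiv0$ follows purely from polynomial identity on an open set.
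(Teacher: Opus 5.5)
Your proof is correct and follows the paper's argument essentially step for step. You cancel the maximal power of $\varepsilon$ so that $F_0=F(0,\cdot,\cdot)\neq0$, pass to the pointwise limit, use vanishing of $F_0(c,c^{\mathsf T}v)$ on the open interior of the normal cone to get divisibility by $z-c^{\mathsf T}v$, and count the distinct monic linear factors in $z$.
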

\begin{proof}
Cancel the largest power of $\varepsilon$ dividing $F$. The relation
is preserved for $\varepsilon>0$, and now
\[
 F_0(c,z):=F(0,c,z)\ne0,\qquad \deg_z F_0\le\deg_z F.
\]
Passing to the limit gives $F_0(c,h_X(c))=0$ for every $c$.

Fix a vertex $v\in X$. On the open interior of its normal cone in $P$,
$h_X(c)=c^{\mathsf T}v$. Thus the polynomial
$F_0(c,c^{\mathsf T}v)$ vanishes on a real open set and is identically
zero. Hence $z-c^{\mathsf T}v$ divides $F_0$. Distinct vertices give
distinct monic linear factors in $z$, proving the bound.
\end{proof}

Only the degree in $z$ matters. Neither convergence of individual
critical points nor finiteness of the limiting critical-point set is
required.

\subsection{Critical values of a quartic penalty}
The next lemma supplies the polynomial relation. Independent signs of
the Gram columns will become independent signs of the blocks below.

\begin{lemma}\label{lem:corners}
Partition $y\in\mathbb R^M$ into blocks $y_j\in\mathbb R^{m_j}$, where $m_j\ge1$ and $\sum_{j=1}^k m_j=M$. Let $q$ have degree at most four, be nonnegative on the unit sphere, and be invariant under changing the sign of any entire block independently. Suppose
\[
 T=\{y:\|y\|^2=1,\ q(y)=0\}
\]
is nonempty. Let $\phi:\mathbb R^M\to\mathbb R^d$ have coordinate polynomials of degree at most two with the same invariance, and suppose $\phi(T)\subseteq P$ for a polytope $P$. The number of vertices of $P$ belonging to $\phi(T)$ is at most
\[
 C(m_1,\ldots,m_k):=\prod_{j=1}^k\frac{3^{m_j}+1}{2}-1.
\]
\end{lemma}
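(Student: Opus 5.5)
We will apply Lemma~\ref{lem:linear-branches} with $X=\phi(T)$, which is compact as the continuous image of a compact set. Since $h_X(c)=\max_{y\in T}c^{\mathsf T}\phi(y)$, we need a perturbed family $h_\varepsilon(c)\to h_X(c)$ together with a nonzero polynomial $F(\varepsilon,c,z)$ with $F(\varepsilon,c,h_\varepsilon(c))=0$ and $\deg_zF\le C(m_1,\ldots,m_k)$.

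\emph{The penalty.} For $\lambda>0$ set
\[
 f_{\lambda,c}(y)=c^{\mathsf T}\phi(y)-\lambda q(y),
\]
and let $h_\lambda(c)=\max_{\|y\|=1}f_{\lambda,c}(y)$. Because $q\ge0$ on the sphere and vanishes exactly on $T$, a standard compactness argument shows $h_\lambda(c)\downarrow h_X(c)$ as $\lambda\to\infty$; we put $\lambda=1/\varepsilon$. To homogenize, replace each monomial of $f$ of degree $e<4$ by its product with $\|y\|^{4-e}$ (or $\|y\|^{2(2-e/2)}$ for even $e$). We must check that the block sign symmetry leaves only even-degree terms; if odd-degree terms occur, use $\|y\|$-parity per block, or else restrict to the even part, which agrees with $f$ on $T$ after averaging over sign changes. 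This gives a quartic form $p_\varepsilon(y)$ that agrees with $f$ on the sphere. We then add a small generic block-invariant quartic $\delta g$ and let $\delta\to0$ together with $\varepsilon$, using a diagonal sequence so that pointwise convergence to $h_X$ is preserved.

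\emph{Critical values.} A critical point of $p$ on the sphere satisfies $\nabla p(y)=\mu y$, and Euler's identity gives $\mu=4p(y)$. If $\mu\neq0$, rescaling $x=y\,(4p(y))^{-1/2}$ converts the critical points into solutions of $\nabla p(x)=x$, which are cubic equations in $M$ unknowns. For generic $p$, B\'ezout yields at most $3^M$ isolated solutions, counting $x=0$, and the critical value equals $p(y)=1/(4\|x\|^2)$. The maximal value is positive after adding a constant multiple of $\|y\|^4$, and this shift only translates $z$. Each nonzero solution $x$ determines the value $\|x\|^2$, and this value is invariant under the group $G=(\mathbb Z/2)^k$ of block sign changes.

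\emph{Counting distinct values.} Call $S$ the solution set, of size at most $3^M$ generically. The number of distinct values is at most the number of $G$-orbits of nonzero solutions. By Burnside's lemma, the number of orbits equals $\frac1{2^k}\sum_{g\in G}|\mathrm{Fix}(g)|$. An element $g$ that flips the set $J$ of blocks fixes exactly the solutions with $x_j=0$ for all $j\in J$. These solve the restricted system in $\sum_{j\notin J}m_j$ variables, which is again of the same form, so $|\mathrm{Fix}(g)|\le\prod_{j\notin J}3^{m_j}$. Summing over $J$ gives $\frac1{2^k}\prod_j(3^{m_j}+1)$ orbits, and subtracting the orbit $\{0\}$ gives $C$.

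\emph{The polynomial relation.} We eliminate $x$ to obtain a polynomial $R(\varepsilon,\delta,c,w)$ that vanishes at the critical values $w=\|x\|^2$. One route is the resultant or the characteristic polynomial of multiplication by $\|x\|^2$ on the quotient ring, which has generic degree $3^M$. We then take the minimal $G$-invariant factor, whose degree is bounded by the orbit count. Substituting $w=1/(4z)$ and clearing denominators yields $F$ with $\deg_zF\le C$.

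\emph{Main obstacle.} The hardest step is the elimination and degree bookkeeping. We must produce an actual polynomial, nonzero, with coefficients polynomial in the parameters, whose $z$-degree equals the orbit count rather than $3^M$. We must also handle non-generic parameters, where solutions sit at infinity or are nonisolated, by continuity of $h$ in the parameters. For this we would define $F$ as the product over orbit representatives, expressed through symmetric functions of the orbit values. The coefficients are algebraic, and on a Zariski-open set they are rational in the parameters. We would clear denominators and pass to the limit by the same cancellation device used in Lemma~\ref{lem:linear-branches}.
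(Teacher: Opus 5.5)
Your outline follows the paper's proof step for step: a penalty on the sphere, homogenization, a block-invariant quartic perturbation, rescaling sphere critical points to solutions of $\nabla p(x)=x$, B\'ezout, and Burnside over $(\mathbb Z/2\mathbb Z)^k$. Your orbit count there is exactly $C$. The gap is the step you flag as the main obstacle, which is left open.

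\emph{The elimination step.} ``Minimal $G$-invariant factor'' has no content here, since each value $\rho(x)=x^{\mathsf T}x$ is already $G$-invariant. (On complex solutions this must be the bilinear form, not a Hermitian norm.) For the orbit product $\prod_O(w-w_O)$ you only assert that its coefficients are rational in the parameters. That is true, but it needs a Galois argument: the Galois group of the solution set over the parameter field commutes with block sign changes, so it permutes orbits and fixes those coefficients.

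The missing, simpler idea is to take the squarefree part of the resultant. Let $\mathcal R(\varepsilon,c,z)$ be the resultant of $\partial_ip(x)-t^2x_i$ $(1\le i\le M)$ and $4\varepsilon z\rho(x)-t^2$. Let $F$ be the product of its \emph{distinct} irreducible factors of positive $z$-degree. In characteristic zero $F$ is separable, so at generic parameters it keeps its $z$-degree and has distinct roots. Each root has the form $1/(4\varepsilon\rho(x))$ for a nonzero solution of $\nabla p(x)=x$. Hence $\deg_zF$ is at most the number of $G$-orbits of such solutions, which your Burnside count bounds by $C$. No orbit product or Galois theory is needed.

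\emph{The independent parameter $\delta$.} Lemma~\ref{lem:linear-branches} needs one relation $F(\varepsilon,c,h_\varepsilon(c))=0$ for all $\varepsilon>0$, with $F$ polynomial in the single variable $\varepsilon$. Dividing out the top power of $\varepsilon$ has no analogue for a joint limit in $(\varepsilon,\delta)$. Along a diagonal sequence the relation may carry no information in the limit, for instance if $\varepsilon-\delta$ divides $F$ and the sequence has $\varepsilon_n=\delta_n$.

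Tie the perturbation to $\varepsilon$, as the paper does with $p=-q+\varepsilon\rho\,c^{\mathsf T}\phi+\varepsilon^2R$, where $R=\sum_iy_i^4$. You must then prove genericity along this one-parameter family. For each fixed $c$, the resultant of $\partial_1p,\ldots,\partial_Mp$ is a nonzero polynomial in $\varepsilon$, because $p/\varepsilon^2\to R$ as $\varepsilon\to\infty$ and $\nabla R$ vanishes only at $0$. This condition has three further consequences. Every restricted block system is finite, which your ``again of the same form'' needs. It forces $\mu\ne0$ at every sphere critical point, so your shift by a multiple of $\|y\|^4$ is unnecessary. The relation then extends from generic to all parameters by continuity of the maximum.

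Finally, odd-degree terms never occur, since flipping all blocks is $y\mapsto-y$; the hedging there is unnecessary.
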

\begin{proof}
\emph{Approximating the support function.}
We will apply Lemma~\ref{lem:linear-branches} to $X=\phi(T)$.
Put $\rho(y)=y^{\mathsf T}y$. Block-sign invariance implies that every
homogeneous term has even degree. Replacing $q_4+q_2+q_0$ by
$q_4+\rho q_2+\rho^2q_0$, and each coordinate $\phi_2+\phi_0$ by
$\phi_2+\rho\phi_0$, changes neither polynomial on the unit sphere.
We may therefore assume that $q$ is homogeneous quartic and $\phi$
homogeneous quadratic.

Use the fixed quartic $R(y)=\sum_{i=1}^M y_i^4$, which preserves all
block signs. For $\varepsilon>0$, set
\[
 p_{\varepsilon,c}(y)=-q(y)+\varepsilon\rho(y)c^{\mathsf T}\phi(y)
 +\varepsilon^2R(y),\qquad
 f_{\varepsilon,c}=p_{\varepsilon,c}/\varepsilon.
\]
On the unit sphere, $f=c^{\mathsf T}\phi-q/\varepsilon+\varepsilon R$.
The penalty forces maximizers toward $T$, while the smaller perturbation
will make the critical-point equations finite.
Its maximum $h_\varepsilon(c)$ is continuous in $(\varepsilon,c)$ for
$\varepsilon>0$, since the sphere is compact. For each fixed $c$,
$h_\varepsilon(c)\to\max_T c^{\mathsf T}\phi$. Indeed, evaluating at a
maximizer on $T$ gives the lower limit inequality. Boundedness on the
sphere and comparison with a point of $T$ give
$q(y_\varepsilon)=O(\varepsilon)$ at any maximizer $y_\varepsilon$.
Every accumulation point therefore lies in $T$, and $-q/\varepsilon\le0$
gives the upper limit inequality.

\emph{Counting critical points.}
A maximum on the sphere is a critical value. Homogeneity turns the
critical-point equations into a cubic system with no multiplier.
For generic parameters with $\varepsilon\ne0$, the homogeneous quartic
$p=p_{\varepsilon,c}$ satisfies
\begin{equation}\label{eq:smooth-quartic}
 \nabla p(x)=0\quad\Longrightarrow\quad x=0
 \qquad (x\in\mathbb C^M).
\end{equation}
This condition is the nonvanishing of the resultant of the partial
derivatives. For each fixed $c$, that resultant is a nonzero polynomial
in $\varepsilon$: as $\varepsilon\to\infty$, the coefficients of
$p_{\varepsilon,c}/\varepsilon^2$ tend to those of $R$, whose gradient
vanishes only at the origin. Thus only finitely many $\varepsilon$ are
excluded for each $c$.

Consider the cubic system
\begin{equation}\label{eq:kkt}
 \nabla p(x)=x.
\end{equation}
Its homogenization is $\partial_i p(x)=t^2x_i$ in $\mathbb P^M$.
Condition \eqref{eq:smooth-quartic} excludes solutions at $t=0$.
The projective zero set is consequently finite: a positive-dimensional
projective variety meets every hyperplane. B\'ezout's theorem bounds its
length by $3^M$. The origin is a simple affine solution, since the
Jacobian of $\nabla p(x)-x$ there is $-I$. Thus
\begin{equation}\label{eq:degree}
 \bigl|\{x\ne0:\nabla p(x)=x\}\bigr|\le3^M-1.
\end{equation}

Let $K$ be the subset of these solutions with $\rho(x)\ne0$. Every
critical value of $f$ on the complex sphere is represented on $K$.
Indeed, at a sphere critical point $y$ we have $\nabla p(y)=\lambda y$.
Condition \eqref{eq:smooth-quartic} gives $\lambda\ne0$, and
$x=y/\sqrt\lambda$ satisfies \eqref{eq:kkt}, with
$\rho(x)=1/\lambda$. Euler's identity gives $4p(y)=\lambda$, so
\begin{equation}\label{eq:cubic-value}
 f(y)=\frac{1}{4\varepsilon\rho(x)}.
\end{equation}
Thus isotropic solutions need not be counted, and no zero-eigenvalue
exception occurs.

\emph{Identifying equal values by symmetry.}
Changing block signs preserves the value in \eqref{eq:cubic-value}.
We therefore count the orbits of $G=(\mathbb Z/2\mathbb Z)^k$ on $K$.
For $S\subseteq[k]$, write
$M_S=\sum_{j\in S}m_j$. The fixed points of the element that leaves
precisely the blocks in $S$ unflipped solve the restricted cubic system
on those blocks. The omitted gradient coordinates vanish by block-sign
invariance. For the same reason, the restricted quartic satisfies
\eqref{eq:smooth-quartic}; otherwise its nonzero gradient zero would
also be a gradient zero of $p$. The preceding B\'ezout argument gives
$|\operatorname{Fix}_K(g)|\le3^{M_S}-1$, including zero for
$S=\varnothing$. Burnside's lemma now yields
\begin{equation}\label{eq:sign-orbits}
 |K/G|=2^{-k}\sum_{g\in G}|\operatorname{Fix}_K(g)|
 \le2^{-k}\sum_{S\subseteq[k]}(3^{M_S}-1)
 =C(m_1,\ldots,m_k).
\end{equation}
This includes the stabilizers at zero blocks, which prevent simply
dividing the full count by $2^k$.

\emph{Passing from critical values to the limit.}
Form the homogeneous resultant $\mathcal R(\varepsilon,c,z)$ of
\[
 \partial_i p(x)-t^2x_i=0\quad(1\le i\le M),\qquad
 4\varepsilon z\rho(x)-t^2=0.
\]
It is a polynomial in the parameters, vanishing exactly when these
equations have a common zero in $\mathbb P^M$
\cite[Chapter 3]{CoxLittleOShea2005}.
Under \eqref{eq:smooth-quartic}, no such zero has $t=0$. For
$\varepsilon\ne0$, its roots in $z$ are therefore precisely the values
in \eqref{eq:cubic-value}. In particular, $\mathcal R$ is nonzero:
$z=0$ would force $t=0$.

Factor in $\R[\varepsilon,c,z]$ as
\[
 \mathcal R=A(\varepsilon,c)\prod_i F_i(\varepsilon,c,z)^{e_i},
 \qquad F=\prod_i F_i,
\]
where the $F_i$ are distinct irreducible factors of positive degree in
$z$. For generic parameters, $F$ retains its degree in $z$ and has
distinct roots, since the coefficient field has characteristic zero.
Each root is a critical value, so \eqref{eq:sign-orbits} gives
$\deg_z F\le C(m_1,\ldots,m_k)$.

For generic real parameters with $\varepsilon>0$, the maximum
$h_\varepsilon(c)$ is one of these values, and hence
$F(\varepsilon,c,h_\varepsilon(c))=0$. Both the maximum and the
polynomial are continuous. The same identity therefore holds for
\emph{every} $\varepsilon>0$ and $c\in\R^d$, including exceptional
parameters. The support-function convergence proved above and
Lemma~\ref{lem:linear-branches} finish the proof.
\end{proof}

\subsection{Applying the count to PSD factors}
We now put the slack factorization into the preceding setting.
Trace normalization supplies the sphere; the affine slack equations
supply the quartic penalty.

\begin{proof}[Proof of Theorem~\ref{thm:count}]
Put $M=rk-\binom{k}{2}$. If $N_{\le k}=0$, there is nothing to prove. Work in affine coordinates in which $P$ is full-dimensional. Boundedness implies that the facet normals positively span $\mathbb R^d$, so a strictly positive linear combination of the facet slacks is constant. That constant is positive by evaluation at an interior point. Rescaling gives positive weights $\lambda_i$ such that
\[
 \sum_i\lambda_i\ell_i(x)=1\quad\text{identically on }\mathbb R^d.
\]
Put $H=\sum_i\lambda_iA_i$. Compress to the orthogonal complement of its kernel if necessary. Each $A_i$ vanishes on that kernel, so all slack entries are preserved under this compression, and vertex-factor ranks cannot increase. A congruence transformation then makes $H=I$. The effective matrix order may have decreased to $r'\le r$ and the effective rank cap to $k'=\min\{k,r'\}$; the product $\prod_{i=r'-k'+1}^{r'}\frac{3^i+1}{2}$ is at most
$\prod_{i=r-k+1}^{r}\frac{3^i+1}{2}$, since its factors are increasing with
their indices and exceed one. It therefore suffices to prove the bound after compression. Reset $r,k$ to the effective values and $M=rk-\binom{k}{2}$. We now have
\begin{equation}\label{eq:normal}
 \sum_i\lambda_i A_i=I,\qquad \tr B_v=1.
\end{equation}

The slack map $x\mapsto(\ell_i(x))_i$ is an affine injection. Let $L$ be its affine image and choose affine equations for $L$, together with an affine map recovering $x$ from a slack vector in $L$.

By Lemma~\ref{lem:gram}, every rank-at-most-$k$ PSD matrix can be written as $YY^{\mathsf T}$ with $Y\in\mathbb R^{r\times k}$ and $Y_{ij}=0$ for $i<j$. The $M$ unrestricted entries form a vector $y\in\mathbb R^M$, partitioned
into column blocks of dimensions $m_j=r-j+1$. Substituting
\[
 z_i(y)=\tr(A_iYY^{\mathsf T})
\]
into the affine equations for $L$ gives polynomials $g_\alpha(y)$ of degree at most two. Substituting into the affine recovery map gives a quadratic map $\phi(y)$. Both are invariant under independent column signs, since these leave $YY^{\mathsf T}$ unchanged. Define
\[
 q(y)=\sum_\alpha g_\alpha(y)^2,\qquad
 T=\{y:\|y\|^2=1,\ q(y)=0\}.
\]
This is a nonnegative quartic of the type required by Lemma~\ref{lem:corners}.

If $y\in T$, then $(z_i(y))_i\in L$, and each $z_i(y)\ge0$. Thus $\phi(y)\in P$. Conversely, any chosen vertex factor of rank at most $k$ has a triangular Gram factor satisfying all $g_\alpha=0$ and $\|y\|^2=\tr B_v=1$, so that vertex belongs to $\phi(T)$. Lemma~\ref{lem:corners} proves the first inequality in \eqref{eq:low-rank-count}. For the second, repeatedly use
$(a+1)(b+1)\le2(ab+1)$ for $a,b\ge1$ to obtain
\[
 \prod_{j=1}^k\frac{3^{m_j}+1}{2}-1
 \le\frac{3^{\sum_jm_j}-1}{2}.\qedhere
\]
\end{proof}

\section{Applications}\label{sec:applications}
\subsection{From vertex counts to lift sizes}\label{sec:general-consequences}
The two useful specializations of Theorem~\ref{thm:count} are $k=1$
for SOS dimension and $k=r-d$ for unrestricted lifts. We record their
exact forms before applying them to the two families.

\begin{corollary}\label{cor:universal}
Let $P$ have $N$ vertices, dimension $d\ge1$, and $r=\xcPSD(P)$. Then
\begin{align}
 \sosc(P)&\ge\lceil\log_3(2N+1)\rceil,\label{eq:universal-sos}\\
 N&\le\prod_{i=d+1}^{r}\frac{3^i+1}{2}-1,\label{eq:universal-count}\\
 r&\ge\left\lceil
 \frac{\sqrt{(2d+1)^2+8\log_3(2N+1)}-1}{2}
 \right\rceil.\label{eq:universal-rank}
\end{align}
Writing $r=d+t$, bound \eqref{eq:universal-count} is equivalently
\begin{equation}\label{eq:excess-count}
 \log_3(N+1)\le dt+\frac{t(t+1)}2-t\log_3 2
 +\sum_{i=d+1}^{d+t}\log_3(1+3^{-i}).
\end{equation}
The same bound \eqref{eq:universal-count} holds for the number of facets.
\end{corollary}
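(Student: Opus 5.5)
The plan is to specialize Theorem~\ref{thm:count} to two choices of the rank cap $k$. The upper bounds on ranks needed for this come from Proposition~\ref{prop:sos-rank} and Proposition~\ref{prop:rank-budget}. The remaining parts are algebraic rewriting, plus polarity for the facet statement.

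For \eqref{eq:universal-sos}, I would put $s=\sosc(P)$. By Proposition~\ref{prop:sos-rank} there is a size-$s$ factorization in which every vertex factor has rank one. Applying Theorem~\ref{thm:count} with $r=s$ and $k=1$ counts all $N$ vertices, so
\[
N=N_{\le1}\le\frac{3^s+1}{2}-1=\frac{3^s-1}{2}.
\]
Hence $3^s\ge2N+1$. Since $s$ is an integer, taking $\log_3$ and the ceiling gives the claim.

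For \eqref{eq:universal-count}, I would fix an optimal factorization of size $r=\xcPSD(P)$. Lemma~\ref{lem:basic} gives $r\ge d+1$, so $k=r-d$ satisfies $1\le k\le r$. Proposition~\ref{prop:rank-budget} shows that every vertex factor has rank at most $r-d$, so $N_{\le r-d}=N$. The first inequality of \eqref{eq:low-rank-count} with $k=r-d$ then has product range $i=r-k+1=d+1$ to $r$, which is exactly \eqref{eq:universal-count}.

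For \eqref{eq:universal-rank}, I would use the second inequality of \eqref{eq:low-rank-count} with the same $k$, which gives $2N+1\le 3^{M}$. Here $M=r(r-d)-\binom{r-d}{2}$. A short computation simplifies this to
\[
M=\frac{r(r+1)-d(d+1)}{2}.
\]
Writing $L=\log_3(2N+1)$, the condition $L\le M$ becomes
\[
r^2+r-\bigl(d^2+d+2L\bigr)\ge0.
\]
The positive root of this quadratic in $r$ is $\bigl(\sqrt{(2d+1)^2+8L}-1\bigr)/2$. Since $r$ is a positive integer lying above that root, we may take the ceiling.

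For \eqref{eq:excess-count}, I would set $r=d+t$ and take $\log_3$ of $N+1\le\prod_{i=d+1}^{d+t}(3^i+1)/2$. Each factor satisfies
\[
\log_3\frac{3^i+1}{2}=i-\log_3 2+\log_3(1+3^{-i}),
\]
and $\sum_{i=d+1}^{d+t}i=dt+t(t+1)/2$. Since each step is an exact identity or a monotone transformation, the two forms are equivalent.

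For the facet statement, I would translate $P$ so that $0\in\operatorname{int}P$. The polar $P^\circ$ is a $d$-polytope whose vertices correspond to the facets of $P$. Lemma~\ref{lem:faces} gives $\xcPSD(P^\circ)=\xcPSD(P)$, so applying \eqref{eq:universal-count} to $P^\circ$ bounds the number of facets. I expect no real obstacle; the only points needing care are checking $r-d\ge1$ so the theorem applies, and the simplification of $M$, which I would verify directly.
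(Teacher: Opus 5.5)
Your proposal is correct and follows the paper's proof essentially step for step: $k=1$ in Theorem~\ref{thm:count} for the SOS bound, $k=r-d$ (justified by Proposition~\ref{prop:rank-budget}) for the unrestricted count, the identity $rk-\binom{k}{2}=\binom{r+1}{2}-\binom{d+1}{2}$ to solve the quadratic inequality, and polarity for the facet statement. Your explicit checks that $1\le r-d\le r$ and of the logarithmic rewriting in \eqref{eq:excess-count} fill in details that the paper leaves implicit.
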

\begin{proof}
Use $k=1$ in Theorem~\ref{thm:count} for an SOS factorization, and
$k=r-d$ for an unrestricted one. In the latter case,
\[
 rk-\binom{k}{2}=\binom{r+1}{2}-\binom{d+1}{2}
 =dt+\frac{t(t+1)}2.
\]
The second inequality in \eqref{eq:low-rank-count} gives
$2N+1\le3^{\binom{r+1}{2}-\binom{d+1}{2}}$; solving this quadratic
inequality gives \eqref{eq:universal-rank}. Polarity preserves dimension
and PSD rank and interchanges facets and vertices.
\end{proof}

\paragraph{Comparison with the displayed vertex bound.}
Fawzi--Safey El Din \cite[Theorem 4]{FawziSafey2018} prove that a closed
convex body containing the origin in its interior and admitting a
size-$r$ semidefinite lift has at most $2^{r^2}$ vertices, where a vertex
means a point with a full-dimensional normal cone. For polytopes this
also bounds facets by polarity, working in the affine hull and
translating an interior point to the origin. In base two, \eqref{eq:universal-count} reads
\begin{equation}\label{eq:comparison-fse}
 \log_2(N+1)\le\frac{\log_2 3}{2}
 \bigl(r^2+r-d^2-d\bigr)-(r-d)
 +\sum_{i=d+1}^{r}\log_2(1+3^{-i}).
\end{equation}
Thus the leading quadratic coefficient is
$\tfrac12\log_2 3=0.79248\ldots$, compared with $1$ in the
displayed Fawzi--Safey El Din bound. In particular, \eqref{eq:universal-count}
is strictly smaller than $2^{r^2}$ for every $1\le d<r$, since
\[
 \prod_{i=d+1}^r\frac{3^i+1}{2}
 <\prod_{i=d+1}^r2^{2i-1}=2^{r^2-d^2}<2^{r^2}.
\]
The dimension saving matters especially when $r$ is close to $d$:
\eqref{eq:excess-count} controls the excess $r-d$ directly.
Our theorem concerns polytopes; it does not replace the more general
convex-body and algebraic-boundary results of \cite{FawziSafey2018}.

The comparison with $2^{r^2}$ does not exhaust the consequences of that
work. Fawzi--Safey El Din explicitly observe that the B\'ezout estimate
in their proof can be replaced by the sum of the SDP algebraic degrees
over the admissible ranks \cite[Section 5]{FawziSafey2018}.
Write $\delta(m,4,q)$ for the algebraic degree with $m$ affine constraints
and matrix rank $q$. The nonzero entries in Nie--Ranestad--Sturmfels
\cite[Table 2]{NRS2010} give
\[
\begin{array}{c|rrrrrrrrr}
m&1&2&3&4&5&6&7&8&9\\ \hline
q=1&0&0&0&0&0&8&16&12&4\\
q=2&0&0&10&30&42&30&10&0&0\\
q=3&4&12&16&8&0&0&0&0&0\\ \hline
\sum_q\delta(m,4,q)&4&12&26&38&42&38&26&12&4
\end{array}
\]
The maximum is $42$, giving at most $42$ vertices for polygons of PSD
rank four by their degree-sum refinement.

The corner analysis in Theorem~\ref{cor:rank-four} gives $12$, including
when the rank-one loci contain curves.
We make no claim that \eqref{eq:universal-count} dominates these degree-based
bounds. The main assertion here is \eqref{eq:low-rank-count}, which
counts factors of rank at most $k$ within an arbitrary fixed
factorization, including one with higher-rank factors at other vertices.

\FloatBarrier
\subsection{Permutahedra}
The standard permutahedron is
\[
 \Pi_n=\conv\{(\sigma(1),\ldots,\sigma(n)):\sigma\in\mathfrak S_n\}.
\]
It has $n!$ vertices and dimension $n-1$. Its affine hull is
$\sum_i x_i=\binom{n+1}{2}$, and its facet slacks are
\begin{equation}\label{eq:slack}
 \ell_S(x)=\sum_{i\in S}x_i-\binom{|S|+1}{2},
 \qquad\varnothing\ne S\subsetneq[n].
\end{equation}
Goemans proves $\xcLP(\Pi_n)=\Theta(n\log n)$ \cite{Goemans2015}.
A hexagonal face gives the unrestricted lower bound
$\xcPSD(\Pi_n)\ge n+1$ for $n\ge3$: fixing coordinates
$4,\ldots,n$ to $4,\ldots,n$ leaves a regular hexagon of PSD rank four
\cite{GRT2013}, and Lemma~\ref{lem:faces} adds its codimension $n-3$.
Figure~\ref{fig:perm-face} follows the successive facet restrictions down to this hexagon.

\begin{figure}[tbp]
\centering
\begin{tikzpicture}[font=\small,line join=round,>=Stealth]
\node[text=figblue] at (0,2.35) {$F_n=\Pi_n$};
\node[text=figblue] at (5.5,2.35) {$F_4\cong\Pi_4$};
\node[text=figteal] at (10.7,2.35) {$F_3\cong\Pi_3$};
\node[align=center,text=figblue] at (0,.35) {$F_n\supset F_{n-1}$\\[5pt]$\supset\ \cdots$};
\node[font=\footnotesize,text=black!60] at (0,-.65) {face flag};
\draw[->,figblue!75,line width=.7pt] (1.35,.35)--(3.55,.35);
\node[align=center,font=\footnotesize] at (2.45,.93) {$x_n=n,\ldots,$\\$x_5=5$};
\node[align=center,font=\footnotesize,text=black!65] at (2.45,-.15) {$n-4$ facet steps};
\begin{scope}[shift={(5.5,0)},scale=.79]
\coordinate (p0) at (0.047992,0.610135);
\coordinate (p1) at (0.480002,-0.733229);
\coordinate (p2) at (-1.200043,0.991131);
\coordinate (p3) at (-0.336024,-1.695595);
\coordinate (p4) at (-2.016069,0.028764);
\coordinate (p5) at (-1.584060,-1.314599);
\coordinate (p6) at (1.248036,1.038746);
\coordinate (p7) at (1.680045,-0.304617);
\coordinate (p8) at (-1.248036,1.800739);
\coordinate (p9) at (0.047992,-2.229350);
\coordinate (p10) at (-2.064062,0.838372);
\coordinate (p11) at (-1.200043,-1.848354);
\coordinate (p12) at (1.200043,1.848354);
\coordinate (p13) at (2.064062,-0.838372);
\coordinate (p14) at (-0.047992,2.229350);
\coordinate (p15) at (1.248036,-1.800739);
\coordinate (p16) at (-1.680045,0.304617);
\coordinate (p17) at (-1.248036,-1.038746);
\coordinate (p18) at (1.584060,1.314599);
\coordinate (p19) at (2.016069,-0.028764);
\coordinate (p20) at (0.336024,1.695595);
\coordinate (p21) at (1.200043,-0.991131);
\coordinate (p22) at (-0.480002,0.733229);
\coordinate (p23) at (-0.047992,-0.610135);
\draw[black!23,densely dashed,line width=.4pt] (p0)--(p1);
\draw[black!23,densely dashed,line width=.4pt] (p0)--(p2);
\draw[black!23,densely dashed,line width=.4pt] (p0)--(p6);
\draw[black!23,densely dashed,line width=.4pt] (p1)--(p3);
\draw[black!23,densely dashed,line width=.4pt] (p1)--(p7);
\draw[black!23,densely dashed,line width=.4pt] (p2)--(p4);
\draw[black!23,densely dashed,line width=.4pt] (p2)--(p8);
\draw[black!23,densely dashed,line width=.4pt] (p3)--(p5);
\draw[black!23,densely dashed,line width=.4pt] (p3)--(p9);
\draw[black!23,densely dashed,line width=.4pt] (p4)--(p5);
\draw[black!23,densely dashed,line width=.4pt] (p4)--(p10);
\draw[black!23,densely dashed,line width=.4pt] (p5)--(p11);
\draw[black!23,densely dashed,line width=.4pt] (p6)--(p7);
\draw[black!23,densely dashed,line width=.4pt] (p6)--(p12);
\draw[black!23,densely dashed,line width=.4pt] (p7)--(p13);
\draw[black!23,densely dashed,line width=.4pt] (p8)--(p10);
\draw[black!23,densely dashed,line width=.4pt] (p8)--(p14);
\draw[black!23,densely dashed,line width=.4pt] (p9)--(p11);
\draw[black!23,densely dashed,line width=.4pt] (p9)--(p15);
\draw[black!23,densely dashed,line width=.4pt] (p10)--(p16);
\draw[black!23,densely dashed,line width=.4pt] (p11)--(p17);
\draw[black!23,densely dashed,line width=.4pt] (p12)--(p14);
\draw[black!23,densely dashed,line width=.4pt] (p12)--(p18);
\draw[black!23,densely dashed,line width=.4pt] (p13)--(p15);
\draw[black!23,densely dashed,line width=.4pt] (p13)--(p19);
\draw[black!23,densely dashed,line width=.4pt] (p14)--(p20);
\draw[black!23,densely dashed,line width=.4pt] (p15)--(p21);
\draw[black!23,densely dashed,line width=.4pt] (p16)--(p17);
\draw[black!23,densely dashed,line width=.4pt] (p16)--(p22);
\draw[black!23,densely dashed,line width=.4pt] (p17)--(p23);
\draw[black!23,densely dashed,line width=.4pt] (p18)--(p19);
\draw[black!23,densely dashed,line width=.4pt] (p18)--(p20);
\draw[black!23,densely dashed,line width=.4pt] (p19)--(p21);
\draw[black!23,densely dashed,line width=.4pt] (p20)--(p22);
\draw[black!23,densely dashed,line width=.4pt] (p21)--(p23);
\draw[black!23,densely dashed,line width=.4pt] (p22)--(p23);
\filldraw[fill=figblue!7,draw=figblue!65,line width=.6pt] (p5)--(p3)--(p1)--(p0)--(p2)--(p4)--cycle;
\filldraw[fill=figblue!7,draw=figblue!65,line width=.6pt] (p7)--(p13)--(p19)--(p18)--(p12)--(p6)--cycle;
\filldraw[fill=figblue!15,draw=figblue!65,line width=.6pt] (p1)--(p7)--(p6)--(p0)--cycle;
\filldraw[fill=figblue!15,draw=figblue!65,line width=.6pt] (p10)--(p4)--(p2)--(p8)--cycle;
\filldraw[fill=figblue!15,draw=figblue!65,line width=.6pt] (p11)--(p9)--(p3)--(p5)--cycle;
\filldraw[fill=figteal!25,draw=figteal,line width=1.1pt] (p2)--(p0)--(p6)--(p12)--(p14)--(p8)--cycle;
\node[font=\scriptsize,text=figteal] at (0.00000,1.41974) {$F_3$};
\filldraw[fill=figblue!7,draw=figblue!65,line width=.6pt] (p3)--(p9)--(p15)--(p13)--(p7)--(p1)--cycle;
\end{scope}
\draw[->,figteal,line width=.8pt] (7.45,.35)--(9.15,.35);
\node[font=\footnotesize] at (8.3,.8) {$x_4=4$};
\node[font=\footnotesize,text=black!65] at (8.3,-.1) {one facet step};
\begin{scope}[shift={(10.7,.35)},scale=1.13]
\filldraw[fill=figteal!13,draw=figteal,line width=1.1pt] (0:1)--(60:1)--(120:1)--(180:1)--(240:1)--(300:1)--cycle;
\foreach \ang in {0,60,120,180,240,300} {\fill[figteal] (\ang:1) circle (1.3pt);}
\end{scope}
\node[font=\footnotesize,text=black!65] at (0,-2.13) {codimension $0$};
\node[font=\footnotesize,text=black!65] at (5.5,-2.13) {codimension $n-4$};
\node[font=\footnotesize,text=black!65] at (10.7,-2.13) {codimension $n-3$};
\draw[black!15] (-1.2,-2.52)--(12,-2.52);
\node[font=\footnotesize,text=black!65] at (5.4,-2.93) {Each facet restriction lowers the available matrix size by at least one.};
\node[text=figblue] at (0,-3.55) {$r$};
\node[text=figblue] at (5.5,-3.55) {$\le r-(n-4)$};
\node[text=figteal] at (10.7,-3.55) {$4\le r-(n-3)$};
\draw[->,black!45] (1.35,-3.55)--(3.55,-3.55);
\draw[->,black!45] (7.45,-3.55)--(9.15,-3.55);
\node at (5.4,-4.4) {$\displaystyle\xcPSD(\Pi_n)\ge \underbrace{4}_{\text{hexagon}}+\underbrace{(n-3)}_{\text{facet steps}}=n+1$};
\end{tikzpicture}
\caption{Codimension accumulates along the face flag $F_m=\{x\in\Pi_n:x_{m+1}=m+1,\ldots,x_n=n\}\cong\Pi_m$. Each step is a facet restriction. The higher-dimensional steps are schematic; $F_4$ is drawn with $F_3$ highlighted. A size-$r$ lift yields a lift of $F_3$ of size at most $r-(n-3)$, while the hexagon requires size four. For $n=4$ the first stage is absent; for $n=3$ only the hexagon remains.}
\label{fig:perm-face}
\end{figure}

\begin{samepage}
\begin{theorem}\label{thm:perm}
For $n\ge3$,
\begin{equation}\label{eq:perm-explicit}
 \xcPSD(\Pi_n)\ge\mathcal L_n:=
 \min\left\{r\in\mathbb Z:r\ge n+1,\quad
 n!+1\le\prod_{i=n}^{r}\frac{3^i+1}{2}\right\}.
\end{equation}
In particular,
\[
 \sosc(\Pi_n)=\Theta(n\log n),\qquad
 \xcPSD(\Pi_n)\ge n+\log_3n-O(1).
\]
\end{theorem}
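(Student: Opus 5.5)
We derive \eqref{eq:perm-explicit} directly from Corollary~\ref{cor:universal}, with $d=n-1$ and $N=n!$. If $r=\xcPSD(\Pi_n)$, then \eqref{eq:universal-count} gives
\[
 n!+1\le\prod_{i=n}^{r}\frac{3^i+1}{2}.
\]
The hexagonal-face argument preceding the theorem, using Lemma~\ref{lem:faces}, gives $r\ge n+1$. Together these show that $r$ is admissible in the definition of $\mathcal L_n$, so $r\ge\mathcal L_n$. The minimum exists, because the product grows without bound in $r$.

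For $\xcPSD(\Pi_n)\ge n+\log_3 n-O(1)$, write $r=n-1+t$ with $t\ge2$ and use \eqref{eq:excess-count} with $d=n-1$. The sum $\sum_{i}\log_3(1+3^{-i})$ is bounded by an absolute constant, say $c_0$. This gives
\[
 \log_3(n!)\le\log_3(n!+1)\le (n-1)t+\tfrac{t(t+1)}2-t\log_3 2+c_0 .
\]
Stirling's formula gives $\log_3(n!)=n\log_3 n-n\log_3 e+O(\log n)$. Suppose $t\le \log_3 n+1-C$ for a large constant $C$.

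First treat the case $t\le 2\log_3 n$. The right side is then at most
\[
 (n-1)(\log_3 n+1-C)+O(\log^2 n),
\]
which equals $n\log_3 n-(C-1)n+O(\log^2n)$. This falls below the Stirling estimate once $C-1>\log_3 e$ and $n$ is large, a contradiction. Hence $t\ge\log_3 n+1-O(1)$, that is, $r\ge n+\log_3 n-O(1)$. The finitely many small $n$ are absorbed into the $O(1)$ using $r\ge n+1$.

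The main care point is the bookkeeping of linear terms in $n$. The product starts at $i=n=d+1$, so the leading term is $(n-1)t$, not $nt$. Stirling's $-n\log_3 e$ must be compared against $-(C-1)n$, which fixes the size of $C$.

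For $\sosc(\Pi_n)=\Theta(n\log n)$, the lower bound comes from \eqref{eq:universal-sos}: $\sosc(\Pi_n)\ge\log_3(2\,n!+1)=\Omega(n\log n)$. The upper bound comes from Proposition~\ref{prop:sos-rank}, which gives $\sosc\le\xcLP$, combined with Goemans' bound $\xcLP(\Pi_n)=O(n\log n)$ \cite{Goemans2015}. No step here is genuinely hard; the only delicate part is the explicit constant-tracking in the asymptotic unrestricted bound.
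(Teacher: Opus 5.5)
Your proposal is correct and follows the paper's proof. You obtain \eqref{eq:perm-explicit} from Corollary~\ref{cor:universal} with $d=n-1$ together with the hexagonal-face bound, and the SOS order from \eqref{eq:universal-sos}, Proposition~\ref{prop:sos-rank} and Goemans' bound. The only difference is in the additive estimate: you argue by contradiction from the exact form \eqref{eq:excess-count}, whereas the paper solves the cruder quadratic $(n-1)t+\tfrac{t(t+1)}2\ge\log_3(2n!+1)$ and approximates its root using Stirling. Your ``first case'' $t\le2\log_3 n$ holds automatically under your hypothesis, so no second case is needed.
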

\end{samepage}
\begin{proof}
Corollary~\ref{cor:universal} gives
$\sosc(\Pi_n)\ge\lceil\log_3(2n!+1)\rceil$ and, with $d=n-1$,
the product condition in \eqref{eq:perm-explicit}. The hexagonal-face
bound gives $r\ge n+1$. Proposition~\ref{prop:sos-rank} and Goemans' LP upper
bound prove the matching SOS upper bound.

Put $b_n=\log_3(2n!+1)$ and write $r=n-1+t$. The second inequality
in \eqref{eq:low-rank-count} gives
\[
 (n-1)t+\frac{t(t+1)}2\ge b_n.
\]
Its positive root in $t$ differs from $b_n/(n-1)$ by
$O((\log n)^2/n)$. Stirling's formula gives
$b_n/(n-1)=\log_3 n-1/\ln3+o(1)$, proving the claimed additive bound.
\end{proof}

\begin{remark}\label{rem:thresholds}
The first five improvements over the face bound are
\[
\begin{array}{c|rrrrr}
\text{lower bound}&n+2&n+3&n+4&n+5&n+6\\ \hline
\text{first }n&22&72&221&664&1990
\end{array}
\]
These thresholds require only integer arithmetic. For every integer $j\ge2$,
\begin{equation}\label{eq:threshold-integer}
 \mathcal L_n\ge n+j
 \quad\Longleftrightarrow\quad
 2^j(n!+1)>\prod_{i=n}^{n+j-1}(3^i+1).
\end{equation}
Indeed, exclude the integer matrix order $r=n+j-1$ in
\eqref{eq:universal-count}. For fixed $j$, let $\eta_j(n)$ be the ratio
of the left side of the strict inequality to its right side. Then
\[
 \frac{\eta_j(n+1)}{\eta_j(n)}
 =\frac{(n+1)!+1}{n!+1}\frac{3^n+1}{3^{n+j}+1}
 >\frac{n}{3^j}\qquad(n\ge2).
\]
Exact comparisons for $3\le n\le\max\{3^j,n_j\}$, where $n_j$ is
the stated threshold, certify the first occurrences and persistence
through this range. The ratio is increasing thereafter. Each extra unit of excess asymptotically requires
multiplying $n$ by three.
\end{remark}

The unrestricted upper bound remains $O(n\log n)$. Theorem~\ref{thm:perm}
does not determine its asymptotic order, whereas it does determine the
order of SOS spaces without any symmetry or degree restriction.

\subsection{Regular polygons}
For the regular $N$-gon $R_N$, the linear formulations of
Fiorini--Rothvo\ss--Tiwary give $\xcLP(R_N)=O(\log N)$ \cite{FRT2012}.
Together with \eqref{eq:universal-sos} and
Proposition~\ref{prop:sos-rank}, this proves
\begin{equation}\label{eq:polygon-sos}
 \sosc(R_N)=\Theta(\log N).
\end{equation}
For every convex $N$-gon $P$, regular or otherwise,
\begin{equation}\label{eq:polygon-rank}
 \xcPSD(P)\ge
 \left\lceil\frac{\sqrt{25+8\log_3(2N+1)}-1}{2}\right\rceil.
\end{equation}
Thus the unrestricted lower bound still has order $\sqrt{\log N}$,
while its leading constant and finite values improve on the estimate
$\sqrt{\log_2 N}$ displayed in \cite{FawziSafey2018}. Their
algebraic-degree refinement can give stronger finite bounds, as noted
in Section~\ref{sec:general-consequences}.
For generic polygons, Gouveia--Robinson--Thomas
\cite{GouveiaRobinsonThomas2015} give the stronger lower bound
$\Omega(N^{1/4})$; genericity is essential to that statement.
Goucha--Gouveia--Silva \cite{GouchaGouveiaSilva2017} study nonnegative,
Boolean, and complex PSD ranks of regular polygons. The PSD rank here
is over the reals. For linear extension complexity in low dimensions,
see also Kwan--Sauermann--Zhao \cite{KwanSauermannZhao2022}.

\subsection{Bounded vertex ranks and rank distributions}\label{sec:rank-distribution}
The restriction defining $r_k(P)$ separates the cost of the lift from
the ranks used to represent its vertices. It has a polar version:
after placing the origin in the interior, transposing
the slack factorization identifies the minimum size with facet ranks
at most $k$ for $P$ with $r_k(P^\circ)$.

\begin{corollary}\label{cor:bounded-vertex-ranks}
For every fixed $k\ge1$,
\[
 r_k(\Pi_n)=\Theta(n\log n),\qquad r_k(R_N)=\Theta(\log N).
\]
The constants may depend on $k$.
\end{corollary}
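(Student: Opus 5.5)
The plan is to prove matching upper and lower bounds separately, using Theorem~\ref{thm:count} for the lower bounds and known linear lifts for the upper bounds.

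\emph{Upper bounds.} These need no work beyond monotonicity in $k$. Restricting vertex ranks to at most $k$ is weaker than restricting them to one, so $r_k(P)\le r_1(P)=\sosc(P)$. By Proposition~\ref{prop:sos-rank}, $\sosc(P)\le\xcLP(P)$. Goemans' bound $\xcLP(\Pi_n)=O(n\log n)$ \cite{Goemans2015} and the Fiorini--Rothvo\ss--Tiwary bound $\xcLP(R_N)=O(\log N)$ \cite{FRT2012} then give $r_k(\Pi_n)=O(n\log n)$ and $r_k(R_N)=O(\log N)$, uniformly in $k$.

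\emph{Lower bounds.} Fix $k$ and take an optimal factorization of size $r=r_k(P)$ in which every vertex factor has rank at most $k$.
\begin{itemize}
\item If $r<k$, the rank cap is vacuous. In that case we apply Theorem~\ref{thm:count} with $k'=r$, or simply note that all ranks are at most $r$, so the count with $\min\{k,r\}$ applies.
\item Otherwise $N_{\le k}=N$, the full vertex count. The second inequality of \eqref{eq:low-rank-count} gives
\[
 2N+1\le 3^{rk-\binom{k}{2}}\le 3^{rk}.
\]
\end{itemize}
Hence $r\ge\log_3(2N+1)/k$. For $\Pi_n$ we have $N=n!$, and Stirling's formula gives $\log_3(2\,n!+1)=\Theta(n\log n)$, so $r_k(\Pi_n)\ge c_k\,n\log n$ with $c_k$ of order $1/k$. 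For $R_N$ the same inequality gives $r_k(R_N)\ge\log_3(2N+1)/k=\Omega(\log N)$. In the case $r<k$ we instead get $2N+1\le 3^{r^2}$, which is stronger than needed whenever $r<k$, since $r^2<rk$.

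The only point requiring care is that Theorem~\ref{thm:count} is stated for $1\le k\le r$, so the case $k>r$ must be handled by replacing $k$ with $r$. Everything else is direct substitution; there is no genuine obstacle. The dependence of the constants on $k$ enters only through the factor $1/k$ in the lower bound.
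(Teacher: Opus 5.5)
Your proof is correct and follows the paper's argument. The upper bound comes from $r_k\le r_1=\sosc\le\xcLP$ together with the Goemans and Fiorini--Rothvo\ss--Tiwary lifts. The lower bound comes from the second inequality of Theorem~\ref{thm:count} with rank cap $\min\{k,r\}$. The only difference is cosmetic: you discard the $\binom{\kappa}{2}$ term and use $r\kappa\le rk$ to get $r_k(P)\ge\log_3(2N+1)/k$ for every $N$, whereas the paper keeps that term and uses $r_k(P)\to\infty$ to ensure $k\le r_k(P)$ for large $N$.
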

\begin{proof}
For a polytope $P$ with $N$ vertices, put $r=r_k(P)$ and
$\kappa=\min\{k,r\}$. The second inequality in
Theorem~\ref{thm:count} gives
\[
 r_k(P)\ge\frac{\log_3(2N+1)+\binom{\kappa}{2}}{\kappa}.
\]
In particular, when $k\le r_k(P)$,
\begin{equation}\label{eq:bounded-rank-lower}
 r_k(P)\ge\frac{\log_3(2N+1)+\binom{k}{2}}{k}.
\end{equation}
For fixed $k$, this condition holds for all sufficiently large $N$,
since Corollary~\ref{cor:universal} forces $r_k(P)\to\infty$ as
$N\to\infty$. Apply \eqref{eq:bounded-rank-lower} with $N=n!$ for
the permutahedron and with $N$ for the regular polygon. The LP
upper bounds yield rank-one vertex factors by
Proposition~\ref{prop:sos-rank}, and hence apply for every $k\ge1$.
\end{proof}

\begin{corollary}\label{cor:rank-distribution}
For every fixed factorization, the following hold.
\begin{enumerate}[label=\textup{(\roman*)},leftmargin=2em]
\item Fix $C>0$. In a size-$r$ factorization of $\Pi_n$ with $r\le Cn$,
put $k_n=\lfloor\log_3 n/(2C)\rfloor$. For sufficiently large $n$,
\begin{equation}\label{eq:perm-rank-tail}
 \frac{\bigl|\{\sigma:\rank B_\sigma\le k_n\}\bigr|}{n!}
 \le3^{-\frac12n\log_3 n+O(n)}.
\end{equation}
Thus all but a vanishing fraction of the vertex factors have rank
$\Omega(\log n)$.
\item In a size-$r$ factorization of any $N$-vertex polytope, if
$1\le k\le r$ and $rk\le\tfrac12\log_3N$, then
\begin{equation}\label{eq:polygon-rank-tail}
 \bigl|\{v:\rank B_v\le k\}\bigr|\le\frac{\sqrt N-1}{2}.
\end{equation}
Consequently, if $r=o(\log N)$ along a sequence of such factorizations,
the fraction of vertex factors of rank at most any fixed $K$ tends to zero.
\end{enumerate}
\end{corollary}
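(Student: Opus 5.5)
Both parts follow from the second inequality in \eqref{eq:low-rank-count} of Theorem~\ref{thm:count}, namely $N_{\le k}\le(3^{rk-\binom k2}-1)/2<\tfrac12 3^{rk}$, applied to the given factorization with a suitable rank threshold. No further structure of the polytope is used; only the vertex counts $n!$ and $N$ enter.

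For (i), I would apply the theorem with $k=k_n$. One first needs $1\le k_n\le r$. Since $r\ge\xcPSD(\Pi_n)\ge n+1$ by Theorem~\ref{thm:perm} and $k_n=O(\log n)$, we have $k_n\le r$. Also $k_n\ge1$ once $\log_3 n\ge 2C$, which holds for sufficiently large $n$. If instead $k_n=0$ the count is trivially zero, since all factors are nonzero. The theorem then gives
\[
 \bigl|\{\sigma:\rank B_\sigma\le k_n\}\bigr|\le\tfrac12\,3^{rk_n}\le 3^{Cn\cdot\log_3 n/(2C)}=3^{\frac12 n\log_3 n}.
\]
Dividing by $n!$ and using Stirling in base three, $\log_3 n!=n\log_3 n-n/\ln 3+O(\log n)=n\log_3 n+O(n)$, the ratio is at most $3^{\frac12 n\log_3 n-n\log_3 n+O(n)}=3^{-\frac12 n\log_3 n+O(n)}$, which is \eqref{eq:perm-rank-tail}. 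This bound tends to zero. Since $k_n=\Theta(\log n)$ for fixed $C$, all but a vanishing fraction of the factors have rank greater than $k_n$, that is, rank $\Omega(\log n)$.

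For (ii), the hypothesis $rk\le\tfrac12\log_3 N$ gives $3^{rk-\binom k2}\le 3^{rk}\le\sqrt N$. Theorem~\ref{thm:count} then yields $N_{\le k}\le(\sqrt N-1)/2$ directly, which is \eqref{eq:polygon-rank-tail}. For the consequence, fix $K$ and suppose $r=o(\log N)$. Set $k=\min\{K,r\}$; ranks never exceed $r$, so the set of vertices with rank at most $K$ equals the set with rank at most $k$. Then $rk\le Kr=o(\log N)$, so eventually $rk\le\tfrac12\log_3 N$. The fraction is therefore at most $(\sqrt N-1)/(2N)\to0$.

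The only delicate point is the bookkeeping in (i): checking that $k_n$ is a legal threshold ($1\le k_n\le r$), and that dropping the $-\binom{k}{2}$ and the $-1$ only weakens the bound. I would also verify that the $O(n)$ term absorbs the factor $\tfrac12$ and the Stirling error uniformly in $C$ fixed. Nothing here needs more than Theorem~\ref{thm:count}.
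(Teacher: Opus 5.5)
Your proposal is correct and follows the paper's proof essentially step for step. For (i) you apply the second inequality of Theorem~\ref{thm:count} with $k=k_n$, use the dimension bound to get $1\le k_n\le r$, and finish with Stirling; for (ii) you use the same inequality together with the substitution $k=\min\{K,r\}$. The only difference is minor: you bound $N_{\le k_n}$ directly rather than its logarithm, so you do not need the paper's separate case $N_{\le k_n}=0$.
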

\begin{proof}
For (i), the dimension bound ensures $1\le k_n\le r$ for large $n$.
If $N_{\le k_n}>0$, Theorem~\ref{thm:count} and Stirling's formula give
\[
 \log_3\frac{N_{\le k_n}}{n!}
 \le rk_n-\log_3(n!)
 \le-\tfrac12n\log_3n+O(n).
\]
If $N_{\le k_n}=0$, the conclusion is immediate. For (ii),
$N_{\le k}\le(3^{rk}-1)/2\le(\sqrt N-1)/2$.
For the last assertion, take $k=\min\{K,r\}$, so $rk\le rK=o(\log N)$
and $N_{\le K}=N_{\le k}$. Divide by $N$.
\end{proof}
Thus an asymptotic improvement over the SOS order for either family
requires unbounded ranks at almost every vertex. These estimates do
not rule out such factorizations.

\section{Rank-one factors and polygon corners}\label{sec:rank-four-corners}
The vertex count controls how many low-rank factors a lift can have.
For polygons, the geometry also restricts their arrangement. A curve
whose image stays inside a polygon must stop to first order when it
reaches a corner. Two distinct endpoints, in turn, must remain distinct
after compression to the relevant kernel.

These observations lead to two structural conclusions. The first
possible linear dependence among rank-one corner matrices has eight
terms and comes from two orthonormal bases. A signed representation of
a facet factor needs enough positive and negative terms to preserve
both endpoints. The octagon construction in Section~\ref{sec:octagons} realizes
the eight-term dependence explicitly.

\subsection{The two factor spaces}
\begin{proposition}\label{prop:p3-spans}
Suppose a polygon $P$ with $N\ge5$ vertices has a size-four PSD slack
factorization $S_{ij}=\tr(A_iB_j)$. Set
$U=\operatorname{span}\{A_i\}$ and $W=\operatorname{span}\{B_j\}$ in
$\mathbb S^4$. Put $u=\dim U$ and $w=\dim W$. Then
\begin{equation}\label{eq:factor-spans}
 u+w\le13.
\end{equation}
Consequently, either $P$ or its polar has a compact size-four
spectrahedral lift of affine dimension at most five.
\end{proposition}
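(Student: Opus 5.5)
The inequality \eqref{eq:factor-spans} should come from linear algebra on the trace pairing. The lift then comes from the standard factorization-to-lift construction applied to the smaller of the two spans.

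\emph{Step 1: the rank of the pairing.} Consider the linear map $\Phi:W\to U^*$, $B\mapsto(A\mapsto\tr(AB))$. Choose the spanning sets $\{A_i\}$ of $U$ and $\{B_j\}$ of $W$. The matrix of the bilinear form $U\times W\to\R$ in these sets is exactly the slack matrix $S=(\tr(A_iB_j))$. Hence $\rank\Phi=\rank S$. By Lemma~\ref{lem:basic}, $\rank S=d+1=3$ for a polygon.

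\emph{Step 2: a dimension count in $\mathbb S^4$.} The kernel of $\Phi$ is $W\cap U^{\perp}$, where $U^\perp$ is the trace-orthogonal complement of $U$ in $\mathbb S^4$. This complement has dimension $10-u$. Therefore
\[
 w-3=\dim\ker\Phi\le\dim U^\perp=10-u,
\]
which is \eqref{eq:factor-spans}. The hypothesis $N\ge5$ is used only to place us in the genuine size-four situation. By \cite{GRT2013}, polygons with at least five vertices have PSD rank at least four, so every factor is nonzero and the spans behave as stated. I do not expect it to be needed further.

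\emph{Step 3: the lift.} From $u+w\le13$ we get $\min\{u,w\}\le6$. Suppose first that $w\le6$. As in the proof of Theorem~\ref{thm:count}, choose weights $\lambda_i>0$ with $\sum_i\lambda_i\ell_i\equiv1$, and put $H=\sum_i\lambda_iA_i$. If $H$ is singular, compress to the complement of $\ker H$; this only shrinks the order and the spans, and Step 2 still applies. A congruence then gives $H=I$, so that $\tr B_j=1$ for every vertex $j$.

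Now define
\[
 K=\bigl\{B\in W\cap\PSD4:\ \tr B=1,\ (\tr(A_iB))_i\in L\bigr\},
\]
where $L$ is the affine image of the slack map. This set is an affine slice of the PSD cone. Its affine hull lies in the hyperplane $\{\tr B=1\}$ of $W$, so it has affine dimension at most $w-1\le5$. It is compact because it lies in the trace-one slice of $\PSD4$. Compose with the affine recovery map from slack vectors in $L$ to $x$. Every point of $K$ maps into $P$, since its slacks are nonnegative and lie in $L$. Every vertex factor $B_j$ lies in $K$, so by convexity of the image, the image of $K$ is exactly $P$.

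If instead $u\le6$, translate an interior point of $P$ to the origin and pass to $P^\circ$. By Lemma~\ref{lem:faces}, polarity transposes the slack matrix up to positive scaling, which swaps the roles of $U$ and $W$. The same construction then gives the required lift of $P^\circ$.

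\emph{Main obstacle.} The only delicate point is the normalization in Step 3. The trace slice must be compact and the dimension bound must survive the compression, which requires checking that compression to the complement of $\ker H$ does not enlarge $w$. This follows because compression is a linear map on $\mathbb S^4$ and cannot increase the dimension of a span.
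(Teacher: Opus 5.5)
Your proposal is correct and follows essentially the same route as the paper. The rank-three trace pairing gives $\dim(W\cap U^\perp)=w-3\le 10-u$, and the lift is the trace-normalized PSD slice of the smaller span, applied to $P^\circ$ when $u\le6$. The differences are minor. The paper uses $N\ge5$ and \cite[Theorem 4.7]{GRT2013} to get $H\succ0$ directly, since compression would give a size-three factorization. It also observes that your constraint $(\tr(A_iB))_i\in L$ is automatic: for $B=\sum_jt_jB_j\in W$ with $\tr(HB)=1$ one has $\sum_jt_j=1$ and $\tr(A_iB)=\ell_i(\sum_jt_jv_j)$.
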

\begin{proof}
The trace pairing restricted to $U\times W$ has rank
$\rank S=3$, since the factors span these spaces. Hence
$\dim(W\cap U^\perp)=\dim W-3$. As $\dim\mathbb S^4=10$,
\[
 \dim W-3\le\dim U^\perp=10-\dim U,
\]
which proves the inequality. Both factor families have positive definite
sums: otherwise compression would give a size-three factorization,
impossible for $N\ge5$ by \cite[Theorem 4.7]{GRT2013}.
Write the facet slacks as $\ell_i(x)$ and choose positive weights
$\lambda_i$ such that $\sum_i\lambda_i\ell_i(x)=1$ identically.
Such weights exist because $P$ is bounded and full-dimensional in its
affine hull. Put $H=\sum_i\lambda_i A_i\succ0$; positivity follows
from the positive definite sum and the strictly positive weights.

Consider
\[
 Q=\{B\in W:B\succeq0,\ \tr(HB)=1\}.
\]
It is compact because $H\succ0$. The average of the normalized vertex
factors is positive definite, so its affine dimension is $w-1$.
If $B=\sum_j t_jB_j$ lies in $Q$, then
$\sum_jt_j=1$, and
\[
 \tr(A_iB)=\ell_i\Bigl(\sum_jt_jv_j\Bigr).
\]
The resulting point is independent of the chosen coefficients, since
the facet normals span the dual of the affine hull. This defines an
affine projection of $Q$ into $P$, by nonnegativity of every trace.
Every $v_j$ lifts to $B_j$, so convexity gives equality of the image
with $P$. Applying the same construction to the polar gives a normalized
PSD slice in $U$ mapping onto $P^\circ$. Since $\min(u,w)\le6$,
one of these lifts has affine dimension at most five.
\end{proof}

\subsection{Corners on curves}
Let $L\subseteq\mathbb S^r$ be a real linear space, let $H\succ0$, and set
\[
 K_L=\{B\in L\cap\PSD r:\tr(HB)=1\}.
\]
Suppose an affine map $\pi$ sends $K_L$ into a full-dimensional polygon $P$.
Write $c(L)$ for the number of vertices of $P$ attained by rank-one matrices
in $K_L$. The associated complex projective locus is
\[
 X_L=\{[z]\in\mathbb P^{r-1}_{\mathbb C}:zz^{\mathsf T}\in L_{\mathbb C}\}.
\]
On its real points the map is
\begin{equation}\label{eq:corner-map}
 \phi([z])=\pi\left(\frac{zz^{\mathsf T}}{z^{\mathsf T}Hz}\right).
\end{equation}
Its homogeneous coordinates are three real quadrics, with the denominator
$z^{\mathsf T}Hz$ strictly positive at every real projective point.
The image of every real point of $X_L$ is in $P$.
For a linear subspace $E\subseteq\R^r$, write $\mathbb S(E)$ for
the space of real symmetric matrices whose image is contained in $E$.

\begin{lemma}\label{lem:corner-linear}
If $\mathbb S(E)\subseteq L$, where $E\subseteq\R^r$ has dimension $h$,
at most $h$ vertices of $P$ are attained by rank-one matrices supported on $E$.
\end{lemma}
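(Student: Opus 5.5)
The plan is to show that the rank-one points $zz^{\mathsf T}$ with $z\in E$ that land on distinct vertices of $P$ are common generalized eigenvectors of a symmetric pencil on $E$, with distinct eigenvalue pairs. Distinct eigenvalue pairs force $H$-orthogonality, so there are at most $h$ such vectors.

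\emph{Setup.} Since $\pi$ is affine on $K_L$ and $\tr(HB)=1$ there, write $\pi(B)=(\tr(\Pi_1B),\tr(\Pi_2B))$ for suitable $\Pi_1,\Pi_2\in\mathbb S^r$. This is legitimate because any constant term can be absorbed as a multiple of $H$. For an affine function $g(x)=c^{\mathsf T}x-\gamma$ on $\R^2$, put $C_g=c_1\Pi_1+c_2\Pi_2-\gamma H$. Then
\[
 y^{\mathsf T}C_gy=(y^{\mathsf T}Hy)\,g(\pi(\hat B)),\qquad \hat B=\frac{yy^{\mathsf T}}{y^{\mathsf T}Hy}.
\]
Let $\Pi_E$ denote orthogonal projection onto $E$. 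For every $y\in E\setminus\{0\}$, the matrix $yy^{\mathsf T}$ lies in $\mathbb S(E)\subseteq L$, so $\hat B\in K_L$ and $\pi(\hat B)\in P$.

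\emph{Key step: vertices give kernel vectors.} Let $v$ be a vertex of $P$ attained by $zz^{\mathsf T}/(z^{\mathsf T}Hz)$ with $z\in E$. Take $c$ in the open normal cone of $P$ at $v$ and set $g(x)=c^{\mathsf T}x-c^{\mathsf T}v$. Then $g\le0$ on $P$, so the form $y\mapsto y^{\mathsf T}C_gy$ is negative semidefinite on $E$. It also vanishes at $z$. For a semidefinite form this forces $\Pi_EC_gz=0$. This holds for all $c$ in an open subset of $\R^2$, and the expression is linear in $c$. Hence
\[
 \Pi_E\Pi_kz=v_k\,\Pi_EHz\qquad(k=1,2).
\]
So $z$ is a common eigenvector of the symmetric pencils $(\Pi_E\Pi_k|_E,\ \Pi_EH|_E)$, with eigenvalue pair $(v_1,v_2)$. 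Moreover $\Pi_EH|_E$ is positive definite.

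\emph{Counting.} Suppose $v\ne w$ are vertices attained at $z,z'\in E$, and say $v_k\ne w_k$. Symmetry of $\Pi_k$ and $H$, together with $z,z'\in E$, gives
\[
 v_k\,z'^{\mathsf T}Hz=z'^{\mathsf T}\Pi_kz=w_k\,z'^{\mathsf T}Hz .
\]
Therefore $z'^{\mathsf T}Hz=0$. Vectors chosen for distinct vertices are pairwise $H$-orthogonal and nonzero, hence linearly independent in $E$. This gives at most $h$ vertices.

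\emph{Main obstacle.} The only delicate point is the passage from "$z$ maximizes a linear functional on the image" to the eigen-equation. This needs two facts, both settled above. First, semidefiniteness must hold on all of $E$, not just on the rank-one points in $K_L$; this is exactly where $\mathbb S(E)\subseteq L$ is used. Second, the kernel condition must hold for an open set of $c$, so that the coefficients of $c_1$, $c_2$ and the constant can be separated.
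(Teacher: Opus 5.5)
Your proof is correct and follows essentially the paper's argument: the paper normalizes $H|_E$ to the identity and notes that a vector mapping to a vertex maximizes the Rayleigh quotient of $a_1D_1+a_2D_2$ for every $a$ in the open normal cone, so it is a common eigenvector of $D_1,D_2$, and distinct vertices give distinct eigenvalue pairs and hence orthogonal eigenvectors. You carry out the same steps without the congruence, replacing eigenvectors by the kernel of the negative semidefinite form $\Pi_EC_g|_E$ and replacing orthogonality by $H$-orthogonality.
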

\begin{proof}
Normalize $H|_E$ to the identity by congruence. The two coordinates of
\eqref{eq:corner-map} on $\mathbb P(E)$ are Rayleigh quotients of symmetric matrices
$D_1,D_2$. If a unit vector $z$ maps to a vertex $v$, it maximizes
$z^{\mathsf T}(a_1D_1+a_2D_2)z$ for every $a$ in the interior of the normal
cone of $v$. Hence $z$ is an eigenvector for two linearly independent such
combinations, and thus a common eigenvector of $D_1,D_2$.
Different vertices give different pairs of eigenvalues. Their chosen common
eigenvectors are orthogonal, because at least one of the two symmetric
matrices has different eigenvalues on them. There are at most $h$.
\end{proof}

\begin{lemma}\label{lem:corner-curve}
Let $C\subseteq X_L$ be a geometrically integral real projective curve of
degree $\delta$, and let its normalization have genus $g$.
If the complex image of \eqref{eq:corner-map} on $C$ is not a line or a point, the
number of vertices of $P$ attained at real points of the normalization is at
most
\[
 3\delta+3g-3.
\]
If the image is a line or a point, the number is at most two.
\end{lemma}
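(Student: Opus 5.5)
The plan is to turn each corner attained on the curve into a singular point of the parametrized image curve, and then bound such singular points with Riemann--Hurwitz and Plücker-type formulas on the normalization. Let $\nu:\tilde C\to C$ be the normalization and $f=\phi\circ\nu:\tilde C\to\mathbb P^2_{\mathbb C}$. This is the map given by the three quadrics of \eqref{eq:corner-map}: the two numerators and the denominator $z^{\mathsf T}Hz$.

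\emph{Degree of $f$.} The quadrics pull back to sections of $\nu^*\mathcal O(2)$, which has degree $2\delta$. After removing base points, $f$ is given by a base-point-free linear system of degree $e\le 2\delta$. If the image is a line or a point, all real points map into a segment meeting $P$ in a segment or a point, which contains at most two vertices. From now on we assume the image is not contained in a line, so $f$ is nondegenerate.

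\emph{Corners are cusps.} Suppose a real point $t_0\in\tilde C$ satisfies $f(t_0)=v$ with $v$ a vertex. Real points near $t_0$ map into $P$, because the denominator is positive there. Write the local real branch in affine coordinates as $\gamma(s)=v+s\gamma_1+O(s^2)$. If $\gamma_1\neq0$, then both $\gamma_1$ and $-\gamma_1$ lie in the tangent cone of $P$ at $v$. This is impossible, since that cone is pointed. Hence $df$ vanishes at $t_0$, so $t_0$ is a ramification point of $f$ in the sense that its branch is singular (a cusp). Distinct vertices come from distinct points of $\tilde C$. Let $\kappa$ be the number of such points, counted with multiplicity $m_t-1\ge1$, where $m_t$ is the order of the first nonzero derivative. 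Then the number of vertices is at most $\kappa$.

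\emph{Bounding $\kappa$.} Project from a general point of $\mathbb P^2$ to get a degree-$e$ map $\tilde C\to\mathbb P^1$. Riemann--Hurwitz gives $2e+2g-2$ ramification, which splits as $e^*+\kappa$. Here $e^*$ is the class, i.e.\ the number of tangent lines through the general point, and the cusps contribute $\kappa$. Next apply Riemann--Hurwitz to the dual (Gauss) map $\tilde C\to(\mathbb P^2)^*$, again projected from a general point. This gives $2e^*+2g-2=e+\iota$ with flex contribution $\iota\ge0$, so $e^*\ge (e-2g+2)/2$. Therefore
\[
 \kappa\le 2e+2g-2-\tfrac{e-2g+2}{2}=\tfrac{3e+6g-6}{2}\le 3\delta+3g-3.
\]
If $f$ is a multiple cover of its image, I would run the same argument for the induced map of lower degree, or simply keep the formulas on $\tilde C$, where they remain valid with multiplicities.

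\emph{Main obstacle.} The hardest part will be justifying these Plücker/Riemann--Hurwitz identities rigorously for a possibly non-birational map with higher-order cusps, and ensuring the dual map is nonconstant, which needs the image to not be a line. I would handle this by working with the osculating (ramification) sequence of the nondegenerate linear system $f^*\mathcal O(1)$. The Plücker formula for a net $(0,a_1,a_2)$ at each point gives total weight $3e+6g-6$, i.e.\ $3(e+2g-2)$. Each cusp has $a_1\ge2$ and hence weight at least $2$, which yields the halved bound above directly.
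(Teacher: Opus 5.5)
Your proposal is correct and follows the paper's proof. You normalize, pass to the base-point-free net of degree $e\le2\delta$, and use the pointed tangent cone to force vanishing order at least $2$, hence ramification weight at least $2$, at each real corner point. The bound then comes from the Plücker total $3e+6g-6$ of the Wronskian, which is exactly your fallback in the last paragraph. Your two Riemann--Hurwitz applications (to the curve and to its dual) simply rederive that Plücker identity in the form $2\kappa+\iota=3e+6g-6$. One harmless correction: the first split should read $e^*+\kappa_{\mathrm{all}}$, where $\kappa_{\mathrm{all}}\ge\kappa$ counts all cusps and not only those at corners, and this only strengthens your inequality.
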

\begin{proof}
Pull the three quadratic sections back to the normalization and remove
their common base divisor. They give a basepoint-free linear series of
projective dimension two and degree $e\le2\delta$. There are no real base
points, since the denominator is positive. At a real normalization point
mapping to a vertex $v$, a local real parameter takes both signs. The two
opposite first derivatives would both have to belong to the pointed tangent
cone of $P$ at $v$. Therefore the derivative is zero; Figure~\ref{fig:corner-stationarity}
shows this local obstruction.

The vanishing sequence of the three sections is consequently at least
$(0,2,3)$, with ramification weight at least two. The Wronskian is a nonzero
section of $\mathcal L^{\otimes3}\otimes K^{\otimes3}$, so its zero divisor
has degree $3e+6g-6$. Equivalently this is the Pl\"ucker formula for a
$g^2_e$; see \cite[Section 4.2]{Esteves2006}. Counting one real preimage of each
distinct vertex gives the bound. If the complex image is a line, that line
contains at most two vertices of a polygon. A constant image contributes
at most one.
\end{proof}

In particular, an entire real projective line, a smooth conic, and a twisted
cubic contained in $X_L$ contribute at most $2,3,6$ vertices, respectively.
For the line, use Lemma~\ref{lem:corner-linear}; the other two curves have genus zero.
These bounds count corners, not all points of the curves.

\begin{figure}[tbp]
\centering
\begin{tikzpicture}[font=\small,line cap=round,line join=round]
\begin{scope}
\fill[figteal!9] (0,0)--(2.35,1.25)--(2.35,-1.25)--cycle;
\draw[figteal!65,line width=.7pt] (2.35,1.25)--(0,0)--(2.35,-1.25);
\node[text=figteal,font=\footnotesize] at (1.62,1.06) {$T_vP$};
\draw[-{Stealth[length=2mm]},figblue,thick] (0,0)--(1.45,.36)
 node[above,font=\footnotesize] {$w$};
\draw[-{Stealth[length=2mm]},figorange,thick] (0,0)--(-1.45,-.36)
 node[below,font=\footnotesize] {$-w$};
\fill (0,0) circle (1.5pt);
\node[below right] at (0,0) {$v$};
\node[align=center,font=\footnotesize] at (.45,-1.72)
 {A nonzero velocity would require\\both $w$ and $-w$ in $T_vP$.};
\end{scope}
\begin{scope}[shift={(6.05,0)}]
\fill[figteal!9] (0,0)--(2.35,1.25)--(2.35,-1.25)--cycle;
\draw[figteal!65,line width=.7pt] (2.35,1.25)--(0,0)--(2.35,-1.25);
\node[text=figteal,font=\footnotesize] at (1.62,1.06) {$T_vP$};
\draw[figblue,line width=1pt,domain=-1:1,samples=81,variable=\t]
 plot ({2*\t*\t},{.8*\t*\t*\t});
\draw[-{Stealth[length=1.7mm]},figblue,line width=1pt,domain=-.8:-.65,samples=12,variable=\t]
 plot ({2*\t*\t},{.8*\t*\t*\t});
\draw[-{Stealth[length=1.7mm]},figblue,line width=1pt,domain=.65:.8,samples=12,variable=\t]
 plot ({2*\t*\t},{.8*\t*\t*\t});
\fill (0,0) circle (1.5pt);
\node[left] at (0,0) {$v$};
\node[align=center,font=\footnotesize] at (1.02,-1.72)
 {A branch can reach the corner\\with zero velocity: $(2t^2,\frac45t^3)$.};
\end{scope}
\end{tikzpicture}
\caption{Why corners force ramification. A local real parameter on the
normalization takes both signs. If $\gamma$ parametrizes the image near
a corner $v=\gamma(0)$, containment in $P$ gives
$\pm\gamma'(0)\in T_vP$. The tangent cone is pointed,
hence $\gamma'(0)=0$. The cusp on the right illustrates one way a branch
can remain in the cone on both sides of the parameter.}
\label{fig:corner-stationarity}
\end{figure}
\subsection{The first possible dependence}
A linear dependence among rank-one PSD matrices equates two positive
sums. The two sums have the same range. At a polygon corner, this simple
observation is surprisingly restrictive.

\begin{theorem}\label{lem:corner-independent}
Any collection of at most seven rank-one matrices in $K_L$ mapping to
distinct vertices of $P$ is linearly independent. Consequently,
$c(L)\le\dim L$ if $\dim L\le6$.

If eight such matrices are dependent, they span a seven-dimensional
space. Their vectors have a common four-dimensional span, and after a
congruence on that span their rays are the projectors onto two
orthonormal bases.
\end{theorem}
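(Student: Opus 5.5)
The plan is to work with a minimal linear dependence among rank-one matrices $z_kz_k^{\mathsf T}\in K_L$ whose images $\pi(z_kz_k^{\mathsf T})=v_k$ are distinct vertices of $P$. Since every $z_kz_k^{\mathsf T}\succeq0$, such a dependence can be rearranged as
\[
 G:=\sum_{i\in I}a_iz_iz_i^{\mathsf T}=\sum_{j\in J}b_jz_jz_j^{\mathsf T},\qquad a_i,b_j>0,\ I\cap J=\varnothing .
\]
Both positive sums have the same range $E\subseteq\R^4$; write $h=\dim E$. Because the $z_k$ on each side span $E$, we get $|I|,|J|\ge h$. Applying $\tr(H\,\cdot)$ and $\pi$ gives $\sum_Ia_i=\sum_Jb_j$, and the point $\pi(G)/\sum a_i$ is a common convex combination of $\{v_i\}_{i\in I}$ and of $\{v_j\}_{j\in J}$. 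So the two disjoint vertex sets have intersecting convex hulls.

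The heart of the argument is to show $|I|\ge4$ and $|J|\ge4$. Then every dependence has at least eight terms, which proves independence of at most seven corner matrices. I would use the facets through the corners. Fix a facet (edge) $F$ and its factor $A_F$. Its quadratic form on $E$ vanishes at exactly those $z_k$ whose vertex lies on $F$, and compressing to $\ker A_F$ preserves the slack values. Each edge of $P$ contains at most two of the $v_k$, and the two endpoints of $F$ must stay distinct after compression, in the sense of Lemma~\ref{lem:faces} and Proposition~\ref{prop:rank-budget}.

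Suppose $|I|\le3$. Then $h\le3$ and the $z_j$, $j\in J$, all lie in the span of at most three vectors. Choose an edge $F$ through some $v_{i_0}$ whose line separates $v_{i_0}$ from the other points. Evaluating $\tr(A_F\,\cdot)$ on both sides of the dependence gives
\[
 \sum_{I}a_i\ell_F(v_i)=\sum_{J}b_j\ell_F(v_j).
\]
The aim is to show this forces $A_F$ to kill too many independent vectors of $E$, namely the endpoints of $F$ together with forced kernel directions coming from the dependence. That would contradict the rank budget $\rank A_F+\rank B_v\le r-d+1=3$, or the fact that the two endpoint factors remain distinct in the compressed size-$(4-\rank A_F)$ factorization of the segment $F$.

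I expect this step to be the main obstacle. The first thing I would try is a case split on $h\in\{1,2,3\}$. For $h\le2$, $\mathbb P(E)$ is a line, and Lemma~\ref{lem:corner-linear} together with Lemma~\ref{lem:corner-curve} caps its corners at two, so there is no room for a dependence. For $h=3$, I would use the convex-hull intersection to produce an edge whose slack is positive on one side of the dependence and vanishes on the other, contradicting the equality above.

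Granting the bound, the remaining claims follow. If $\dim L\le6$, then $\dim L+1\le7$ corner matrices would be dependent, which is impossible, so $c(L)\le\dim L$.

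For eight dependent matrices, minimality forces $|I|=|J|=4$ and $h=4$, so each side is a basis of $E=\R^4$ and $G\succ0$. There is exactly one relation: any second independent relation could be combined with the first to cancel a term, giving a dependence with at most seven terms. Hence the eight matrices span a seven-dimensional space. Finally, the congruence $z\mapsto G^{-1/2}z$ turns $G$ into $I$. Then $\sum_I a_iw_iw_i^{\mathsf T}=I$ with the four $w_i$ a basis, which forces $\{\sqrt{a_i}\,w_i\}$ to be orthonormal, and likewise for $J$. So the rays are the projectors onto two orthonormal bases.
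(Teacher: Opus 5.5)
Your reduction is sound as far as it goes. A dependence with at most seven terms has a side with at most three terms, so $h\le3$, and the case $h=1$ is trivial. The case $h=2$ follows from the two-corner cap on $\mathbb P(E)$: minimality supplies $m-1\ge3=\dim\mathbb S(E)$ independent matrices, so $\mathbb S(E)\subseteq L$ and Lemma~\ref{lem:corner-linear} applies. Your derivations of $c(L)\le\dim L$ and of the eight-term structure (a unique relation, $h=4$, whitening by $G^{-1/2}$ into two orthonormal bases) are correct once independence is known.

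The gap is the case $h=3$, which you flag as the main obstacle but do not prove, and the tactic you sketch cannot close it. Evaluating the dependence against edge slacks gives $\sum_Ia_i\ell_F(v_i)=\sum_Jb_j\ell_F(v_j)$ for every edge $F$. Because $\sum_Ia_i=\sum_Jb_j$ and the edge slacks span all affine functions on the plane, this family of identities is equivalent to the convex-hull intersection you already recorded. That planar condition is satisfiable with $|I|=|J|=3$: take alternate vertices of a regular hexagon with equal weights. Moreover, when $h=3$ each side carries at least three vertices while an edge contains at most two, so no edge slack can vanish on an entire side.

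The compression and rank-budget route is not available either. The theorem concerns an arbitrary $L\subseteq\mathbb S^r$ with an affine map $\pi:K_L\to P$. So $\ell_F\circ\pi$ extends to $\tr(A_F\,\cdot)$ only with $A_F$ determined modulo $L^\perp$ and not necessarily positive semidefinite, and neither $r=4$ nor Proposition~\ref{prop:rank-budget} is among the hypotheses.

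The missing idea is to bound corners using the rank-one locus inside $\mathbb S(E)$, not the polygon's slacks. Minimality gives $m-1$ independent matrices in $\mathbb S(E)$, so $m-1\le\binom{h+1}{2}$; together with $2h\le m$, the case $h=3$ has $m\in\{6,7\}$.
\begin{itemize}
\item If $m=7$, the matrices span $\mathbb S(E)$, so $\mathbb S(E)\subseteq L$ and Lemma~\ref{lem:corner-linear} allows only three corner images.
\item If $m=6$, their span is a hyperplane of $\mathbb S(E)$, namely the orthogonal complement of a single quadratic form on $E$. Its rank-one locus is therefore a conic in $\mathbb P(E)\cong\mathbb P^2$. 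A smooth conic reaches at most three corners by Lemma~\ref{lem:corner-curve}, and a pair of real lines at most four by Lemma~\ref{lem:corner-linear}. A double line reaches at most two, and a pair of conjugate nonreal lines has at most one real point.
\end{itemize}
None of these supplies six distinct vertices. This is what proves the independence statement on which the rest of your argument depends.
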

\begin{proof}
Choose a minimally dependent subcollection $z_i z_i^{\mathsf T}$ of size
$m\le7$, and split its dependence into two positive sums. Their common
range $E$ has dimension $h$, where
\[
 2h\le m,\qquad m-1\le\dim\mathbb S(E)=\binom{h+1}{2}.
\]
Thus $h\le3$. If $h=1$, all the normalized matrices have the same image.
If $h=2$, the inequalities force $m=4$, and the matrices span
$\mathbb S(E)$. Lemma~\ref{lem:corner-linear} allows only two corner
images on this space.

If $h=3$, then $m=6$ or $7$. For $m=7$ the matrices again span
$\mathbb S(E)$, which allows only three corner images. For $m=6$ their
span is a hyperplane in $\mathbb S(E)$, so its rank-one locus in
$\mathbb P(E)$ is a conic. A smooth conic contributes at most three
corners by Lemma~\ref{lem:corner-curve}; two real lines contribute at
most four by Lemma~\ref{lem:corner-linear}. A double line contributes
at most two, and two conjugate nonreal lines have at most one real point.
None can supply six distinct corners. This proves independence.

Now suppose eight matrices are dependent. Any seven are independent,
so their dependence is unique up to scale and has full support. Splitting
signs gives a positive matrix $M$ on a common range $E$. Since their span
has dimension seven, $\dim E\ge4$. Each side of the dependence has at
least $\dim E$ summands, so there are exactly four on each side and
$\dim E=4$. Congruence by $M^{-1/2}$ on $E$, with the positive
coefficients absorbed into the vectors, turns each sum into a
four-vector decomposition of the identity. Each is an orthonormal basis.
\end{proof}

\subsection{Kernel directions at a facet}
The following argument uses both sides of the slack factorization.
It applies in every matrix size.

\begin{lemma}\label{lem:five-factors}
In a real PSD slack factorization of a polygon, of any size, suppose a
facet factor $A$ of rank $a$ is a linear combination of other rank-one
facet factors. Every such expression has at least $a+2$ positive
coefficients and at least two negative coefficients.
\end{lemma}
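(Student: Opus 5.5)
The plan is to write $A_j=a_ja_j^{\mathsf T}$ for the rank-one facet factors and to split the given expression by sign as
\[
 A+\sum_{j\in J_-}|c_j|\,a_ja_j^{\mathsf T}=\sum_{j\in J_+}c_j\,a_ja_j^{\mathsf T}.
\]
Both sides are then PSD. I would compress this identity to $\ker A$, using the two endpoints $v,w$ of the facet $F$ with factor $A$, as in the proofs of Lemma~\ref{lem:faces} and Proposition~\ref{prop:rank-budget}. Let $\Pi$ be the orthogonal projector onto $\ker A$. Since $\tr(AB_v)=\tr(AB_w)=0$ and all factors are PSD, $B_v$ and $B_w$ are supported in $\ker A$, so $\ell_j(u)=(\Pi a_j)^{\mathsf T}B_u(\Pi a_j)$ for $u\in\{v,w\}$. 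Compressing the displayed identity gives
\[
 \sum_{j\in J_-}|c_j|\,\Pi a_ja_j^{\mathsf T}\Pi=\sum_{j\in J_+}c_j\,\Pi a_ja_j^{\mathsf T}\Pi=:N .
\]
So the two sign classes have the same compressed PSD matrix $N$ on $\ker A$.

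The central claim is that $\rank N\ge2$. Granting it, both conclusions follow by counting.

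\emph{Negative coefficients.} A positive combination of rank-one matrices of rank at least two needs at least two summands, so $|J_-|\ge2$.

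\emph{Positive coefficients.} The range of $\sum_{J_+}c_ja_ja_j^{\mathsf T}$ contains the range of $A$, which has dimension $a$ and meets $\ker A$ trivially. Its image under $\Pi$ is the range of $N$, of dimension at least two. Hence the vectors $a_j$ with $j\in J_+$ span a space of dimension at least $a+2$, and $|J_+|\ge a+2$.

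The claim $\rank N\ge2$ is the main obstacle, and it is where the two distinct endpoints enter. I would prove it by contradiction. Suppose $N$ has rank at most one, so every $\Pi a_j$ with $j\in J_+\cup J_-$ is a multiple $\lambda_jp$ of one vector $p\in\ker A$. Then $\ell_j(v)=\lambda_j^2\,p^{\mathsf T}B_vp$ and $\ell_j(w)=\lambda_j^2\,p^{\mathsf T}B_wp$. Thus on $\{v,w\}$ all the slack rows occurring in the expression are nonnegative multiples of one vector $(s_v,s_w)$.

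\emph{Case $p=0$.} All these rows vanish at both $v$ and $w$. But $F$ is the only facet containing both endpoints, and the $A_j$ belong to other facets, so this is impossible.

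\emph{Case $p\ne0$.} Here I would use the slack identity $\ell_F=\sum_jc_j\ell_j$ as affine functions. This holds because the factor dependence transfers to the slack rows at every vertex, and the vertices affinely span the plane. The same rank-one conclusion then has to hold after compressing all vertex factors to $\ker A$. I expect to combine this with the normalization $\sum_i\lambda_i\ell_i\equiv1$, $\sum_i\lambda_iA_i=I$ from the proof of Theorem~\ref{thm:count}. That normalization makes $B_v$ and $B_w$ distinct points of a compact slice of $\PSD{}$ on $\ker A$, mapping onto the segment $F$. The aim is to show that a single direction $p$ cannot carry a facet dependence which must distinguish $v$ from $w$.

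I expect this last step to need care. If the rank-one reduction alone does not contradict the affine relation, I would strengthen it by invoking the adjacent facets at $v$ and at $w$. Their rows are $(0,+)$ and $(+,0)$ on $\{v,w\}$, so they cannot be parallel, and I would show that a minimal dependence must involve them.
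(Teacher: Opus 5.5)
Your reduction and both counting steps are correct, and they match the paper's. With $E=\operatorname{span}\{a_j\}$ one has $\rank N=\dim\Pi(E)=\dim E-a=\dim\ker(A|_E)$, so your claim $\rank N\ge2$ is exactly the paper's inequality $h-a\ge2$.

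The gap is that you never prove this claim when $p\ne0$, and the routes you sketch do not work:
\begin{itemize}
\item Compressing the other vertex factors to $\ker A$ changes their slacks, since only the two endpoints of $F$ are supported there.
\item The facets adjacent to $F$ need not occur in the expression at all. They may even have rank two, as in the thirteen-gon configuration of Proposition~\ref{prop:exclude-thirteen}.
\item The identity $\ell_F=\sum_jc_j\ell_j$ evaluated at $v,w$ only gives $\sum_jc_j\lambda_j^2=0$, which the sign split already allows.
\end{itemize}

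The missing ingredient is that three distinct facet slacks of a polygon are linearly independent affine functions. With the origin in the interior they are positive multiples of $1-c_i\cdot x$, where the $c_i$ are distinct vertices of the polar polygon and hence not collinear.

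Your own setup supplies three such facets:
\begin{itemize}
\item No $\lambda_j$ vanishes, since that facet would contain both $v$ and $w$.
\item $J_+\ne\varnothing$ because $M_+\succeq A\ne0$, so $N\ne0$ and therefore $J_-\ne\varnothing$.
\item $\operatorname{range}A\subseteq\operatorname{range}M_+$ and $\Pi(\operatorname{range}M_+)=\operatorname{range}N\ne0$ give $|J_+|\ge a+1\ge2$.
\end{itemize}

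For three of these facets, the map $x\mapsto(\ell_{j_1}(x),\ell_{j_2}(x),\ell_{j_3}(x))$ is an invertible linear map $G$ applied to $(x,1)$. Your proportionality says $G(v,1)=s_v\Lambda$ and $G(w,1)=s_w\Lambda$ with $\Lambda=(\lambda_{j_i}^2)_i$. Comparing last coordinates of $G^{-1}$ applied to both sides forces $s_v=s_w$, and then $v=w$, a contradiction.

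With this step added, your argument is essentially the paper's proof. The paper compresses the endpoint factors to $E$ rather than the facet vectors to $\ker A$, and it rules out two-term expressions separately beforehand.
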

\begin{proof}
Write $A=\sum_i t_i a_i a_i^{\mathsf T}$, discarding zero coefficients,
and put $E=\operatorname{span}\{a_i\}$, $h=\dim E$.
There must be at least three terms. Indeed, two distinct facet vectors
cannot be proportional, and a PSD combination of at most two independent
rank-one matrices has only positive coefficients. Such a combination
cannot represent another facet: on the relative interior of that facet,
all the selected slacks are strictly positive.

Let $B,B'$ be the factors of the two endpoints of the facet of $A$.
Their compressions to $E$ are PSD and supported on $\ker(A|_E)$.
Three selected facet slacks are linearly independent: after placing the
origin inside the polygon and normalizing them to $1-c_i\cdot x$, the
$c_i$ are three distinct vertices of the polar polygon and are not
collinear. Neither compressed endpoint factor is zero, since three
distinct facets cannot meet at a vertex. If $\dim\ker(A|_E)\le1$, the
two compressions are positive multiples of each other. The three slacks,
and hence the homogeneous endpoint coordinates, would then be
proportional. The endpoints would coincide. Thus
\[
 h-a\ge2.
\]
Split the expression as $A=M_+-M_-$ according to the signs of $t_i$.
Since $M_+=A+M_-$, its range contains that of $M_-$ and therefore equals
$E$. The number of positive coefficients is at least $h\ge a+2$.
Also $h=\rank M_+\le a+\rank M_-$, so the number of negative
coefficients is at least $h-a\ge2$.
\end{proof}

The argument counts directions needed to distinguish the endpoints,
not just directions occupied by the facet factor. This is why a rank-$a$
factor needs at least $a+4$ terms in such a representation.
The next section combines these principles with the small-dimensional
rank-one loci to prove Theorem~\ref{cor:rank-four}.

\section{The twelve-vertex bound}\label{sec:rank-four-bound}
We now prove Theorem~\ref{cor:rank-four}. We count the polygon corners
attained by the rank-one loci: even when a locus contains a curve,
Lemma~\ref{lem:corner-curve} bounds its corner images. Thus
positive-dimensional loci become cases of the argument, and no
finiteness assumption is needed. We first bound the corner counts on
the small loci, then combine the two factor spaces using polygon
incidences. The only remaining equality case becomes a question about
cubic polynomials and is excluded in Section~\ref{sec:exclude-thirteen}.

\subsection{Counting corners on the small rank-one loci}
\begin{lemma}\label{lem:corner-loci}
For $L\subseteq\mathbb S^4$ one has
\[
 c(L)\le8\quad(\dim L\le7),\qquad
 c(L)\le12\quad(\dim L\le8).
\]
No finiteness assumption on $X_L$ is needed.
\end{lemma}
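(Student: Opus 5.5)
Since $c(L)$ is monotone in $L$, we may assume $\dim L=7$ or $8$. The plan is to study the rank-one locus $X_L\subseteq\mathbb P^3_{\mathbb C}$ as the intersection of the Veronese threefold $\nu_2(\mathbb P^3)\subseteq\mathbb P^9$, which has degree $8$, with a linear space of codimension $10-\dim L$. We then bound corner images component by component. Zero-dimensional components contribute at most their number of real points. Positive-dimensional components are handled by Lemma~\ref{lem:corner-curve} for curves and by Lemma~\ref{lem:corner-linear} for linear pieces.

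\emph{Case $\dim L=7$.} Here $X_L$ is cut out by three quadrics in $\mathbb P^3$. If it is finite, B\'ezout gives at most $8$ points, so $c(L)\le8$. If it contains a curve $C$, the plan is to write $X_L=C\cup Z$ with $Z$ the residual scheme and to bound the corners on each part. The useful fact is that a curve of degree $\delta$ in the base locus of a net of quadrics has $\delta\le4$ unless the quadrics share a common component. Each possibility is then checked by Lemma~\ref{lem:corner-curve}:
\begin{itemize}
\item a line contributes at most $2$ corners;
\item a conic contributes at most $3$;
\item a twisted cubic contributes at most $6$;
\item a degree-four elliptic curve contributes at most $12$, which is too many, so this subcase needs extra work.
\end{itemize}
The residual points are bounded by excess intersection, namely $8$ minus the equivalence of the curve. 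For example, a line has equivalence roughly $3\delta+2g-2=1$, which leaves $7$ further points. Combined with Theorem~\ref{lem:corner-independent}, any eight rank-one corner matrices in a seven-dimensional space that are dependent must come from two orthonormal bases. This should still cap the total at eight.

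\emph{Case $\dim L=8$.} Now $X_L$ is an intersection of two quadrics. When this is a curve of degree $4$ and arithmetic genus $1$, Lemma~\ref{lem:corner-curve} gives $3\cdot4+3g-3\le12$ for $g\le1$. Degenerate pencils split into lines, conics and twisted cubics, each bounded as above, and a common quadric component leaves a surface. For a surface we restrict attention to corner points: the corners together span at most an $8$-dimensional space. The plan is to apply the Wronskian count to a hyperplane section, or to use the ruling structure of the quadric surface, whose lines each carry at most two corners.

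\textbf{Main obstacle.} The hardest step will be the cases with a common surface component, or with an elliptic quartic inside a $7$-dimensional $L$. For the quartic, I would first show that an elliptic quartic spans a $\mathbb P^3$ whose quadric relations cut out exactly a pencil. This forces $\dim L\ge8$ and reduces it to the second case. For a quadric surface I would first try Lemma~\ref{lem:corner-linear} on the reducible cases and the ruling argument on the smooth one.
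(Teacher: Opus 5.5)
\textbf{The case $\dim L=7$ with a curve in $X_L$ is not proved.} Your finite case and your treatment of the elliptic quartic match the paper. The elliptic quartic argument is that its quadric ideal is only a pencil, which forces $\dim L=8$. The positive-dimensional case for $\dim L=7$ has three problems.
\begin{enumerate}
\item Your residual count is miscomputed. Three quadrics through a line meet in the line and at most four further points, not seven. To see this, note that $q_1,q_2$ meet in the line and a residual cubic meeting it twice, and $q_3$ cuts that cubic in six points, two of them on the line. A line therefore gives at most $2+4=6$ corners.
\item Having reached $2+7=9$, you appeal to Theorem~\ref{lem:corner-independent}. That theorem does not exclude nine corners in a seven-dimensional $L$. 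It only says that each dependent eight-subset becomes two orthonormal bases after some congruence, and you give no argument that nine such configurations are incompatible.
\item Your list of curves (line, conic, twisted cubic, elliptic quartic) omits the singular integral $(2,2)$ complete intersections. For these, Lemma~\ref{lem:corner-curve} gives $9$ corners, plus possibly one more at a real singular point with no real preimage on the normalization. That exceeds $8$, so this case must also be excluded for $\dim L=7$. The same acnode correction is needed in your $\dim L=8$ count, which becomes $9+1=10$. Configurations with several curve components, or with conjugate nonreal components, are not handled either. A single-curve equivalence formula does not cover intersecting components.
\end{enumerate}

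\textbf{The missing idea is a uniform degree budget, which avoids excess intersection.} Choose $q_1,q_2\in D=L^\perp$ with no common factor. Then $X_L\subseteq Y=V(q_1,q_2)$, whose reduced components have total degree at most four.
\begin{itemize}
\item On a component of degree $\delta$ not contained in $X_L$, a member of $D$ vanishing on none of these components cuts at most $2\delta$ points.
\item On a component contained in $X_L$, the corner bounds $2,3,6$ for a line, smooth conic and twisted cubic are each at most $2\delta$.
\item Conjugate nonreal pairs contribute only their at most $1$ or $4$ real common points.
\end{itemize}
This gives $c(L)\le8$. The only exception is when $Y$ is itself an integral quartic. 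Its saturated ideal is $(q_1,q_2)$, so it forces $\dim D=2$, i.e.\ $\dim L=8$, where the bound is $12$ or $10$.

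\textbf{Two smaller corrections.}
\begin{itemize}
\item The reduction ``$c(L)$ is monotone in $L$'' is not available. The map $\pi$ need not extend to a larger $K_{L'}$ with image in $P$. Instead use $c(L)\le\dim L$ from Theorem~\ref{lem:corner-independent} when $\dim L\le6$, or simply run the argument for every $\dim D\ge2$.
\item Two independent quadrics cannot share a quadric surface. Their common factor is a real linear form, so $X_L$ is a plane together with a real linear space of dimension at most one. Lemma~\ref{lem:corner-linear} then gives at most $3+2=5$ corners, with no ruling or Wronskian argument needed.
\end{itemize}
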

\begin{proof}
Put $D=L^\perp$, the space of quadrics defining $X_L$, so $\dim D\ge2$.
The argument has one degree budget: apart from a persistent integral
quartic, each component contributes at most twice its degree, and the
total degree is four.

First suppose all quadrics in $D$ share a factor. Their common factor
is a real linear form $\ell$: a quadratic common factor would force
$\dim D=1$, and a nonreal linear factor would bring its conjugate.
Write $D=\ell E$, where $E$ contains at least two independent linear
forms. Then $X_L$ is a real plane together with a possibly empty real
linear space of dimension at most one. Lemma~\ref{lem:corner-linear}
gives at most $3+2=5$ corner images.

Otherwise choose real $q_1,q_2\in D$ with no common factor. Their
intersection $Y\subset\mathbb P^3_{\mathbb C}$ is a pure one-dimensional
complete intersection of degree four. Its reduced irreducible components
have total degree at most four. Call a component persistent if it is
contained in $X_L$.

For the nonpersistent components, choose $q\in D$ that vanishes
identically on none of them. On each such component of degree $\delta$,
B\'ezout bounds the points of $X_L$ by $2\delta$.
A real geometrically integral persistent component of degree at most
three is a line, a smooth conic, or a twisted cubic. A planar cubic
cannot occur, since both $q_1,q_2$ would contain its plane and share a
factor. The preceding lemmas bound the corner contributions of these
three curves by $2,3,6$, respectively, each at most twice its degree.
Nonreal persistent components occur in conjugate pairs. Only pairs of
lines or conics fit within degree four; their real points lie in the
intersection of the pair. Two distinct lines have at most one common
point, and two distinct conics have at most four. These contributions
also fit the same degree budget. Summing gives $c(L)\le8$ unless a
persistent integral quartic occurs.

Such a quartic exhausts the degree of $Y$, with multiplicity one. A
complete intersection has no embedded components, so $Y$ is reduced
and equals that quartic. Its saturated ideal is $(q_1,q_2)$, with
degree-two part $\operatorname{span}(q_1,q_2)$. Since every member of
$D$ vanishes on the quartic, necessarily $\dim D=2$ and $X_L=Y$.
Thus this exception occurs only when $\dim L=8$.

The quartic has arithmetic genus one by the complete-intersection
Hilbert polynomial. If its normalization has genus one, it is smooth,
and Lemma~\ref{lem:corner-curve} gives twelve corner images. Otherwise
the normalization has genus zero and the total delta invariant is one.
There is at most one singular point, hence at most one real point
without a real preimage on the normalization. The same lemma, together
with that possible extra point, gives at most $9+1=10$. This proves
both bounds, including the singular and nonreduced cases.
\end{proof}

\subsection{Combining the two factor spaces}
Retain the factor spaces $U,W$ from Proposition~\ref{prop:p3-spans}, and
put $u=\dim U$, $w=\dim W$. Their normalized PSD slices map onto the
polar polygon and the polygon, respectively. Thus the corner bounds
apply to both sides. By Proposition~\ref{prop:rank-budget}, every factor
has rank one or two, and incident factors cannot both have rank two.

Let $f_k,\nu_k$ count rank-$k$ factors on the facet and vertex sides. If both
sides contain rank-two factors, let $\rho_F,\rho_V\ge1$ count their
respective cyclic runs. The $f_2$ facets in $\rho_F$ runs meet
$f_2+\rho_F$ vertices, all rank one. Interchanging the sides gives
\begin{equation}\label{eq:factor-runs}
 N\le f_1+\nu_1-\max(\rho_F,\rho_V)\le f_1+\nu_1-1.
\end{equation}
If one side is entirely rank one, $N$ is simply its rank-one count.

\begin{corollary}\label{cor:five-dim-facets}
In the size-four setting, if $N\ge6$ and $u=5$, then $f_1\le4$.
The analogous statement holds for $W$.
\end{corollary}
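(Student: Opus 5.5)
We argue by contradiction: assume $N\ge6$, $u=5$, and at least five rank-one facet factors $a_ia_i^{\mathsf T}$, $1\le i\le5$. The goal is to exhibit a forbidden linear representation of a facet factor, violating Lemma~\ref{lem:five-factors}, or too few corners on the facet side, violating Theorem~\ref{lem:corner-independent}.

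\emph{Step 1: the five rank-one factors form a basis.} The normalized facet factors lie in $K_U$, which maps onto $P^\circ$, and they map to distinct vertices of $P^\circ$. Theorem~\ref{lem:corner-independent} says any at most seven such rank-one matrices are linearly independent. Hence these five are independent, and since $u=5$ they span $U$.

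\emph{Step 2: every other facet factor is a combination of these five.} Since $N\ge6$, there is a sixth facet factor $A$, and it lies in $U$. So $A$ is a linear combination of the five rank-one facet factors. Write $a=\rank A\in\{1,2\}$. Lemma~\ref{lem:five-factors} requires at least $a+2\ge3$ positive and at least two negative coefficients. The total number of nonzero terms is then at least $a+4$, and only five terms are available, so $a=1$. The expression must then have exactly three positive and two negative coefficients.

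\emph{Step 3: derive the contradiction.} If $a=1$, then $A$ is itself a rank-one facet factor mapping to a sixth vertex of $P^\circ$. The six rank-one facet factors $a_1a_1^{\mathsf T},\dots,a_5a_5^{\mathsf T},A$ lie in $K_U$ and map to distinct corners. Theorem~\ref{lem:corner-independent} says they are linearly independent, which is impossible in the five-dimensional space $U$.

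Thus every facet factor beyond the five would need rank two, and rank two is excluded by the term count in Step 2. This leaves no sixth facet, contradicting $N\ge6$. The main point to check is that Theorem~\ref{lem:corner-independent} applies to $K_U$ with the normalization $H$ from Proposition~\ref{prop:p3-spans}, which holds by the polar construction there. The statement for $W$ follows by applying the same argument to the polar polygon.
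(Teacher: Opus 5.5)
Your proof is correct and follows the paper's argument. Theorem~\ref{lem:corner-independent} makes five rank-one facet factors a basis of $U$ and excludes a sixth rank-one facet, while Lemma~\ref{lem:five-factors} excludes a rank-two facet, which would need at least six terms; the only difference is that the paper first records $f_1\le5$ via $c(U)\le\dim U$ and then rules out equality, whereas you split on the rank of a sixth facet factor.
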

\begin{proof}
Theorem~\ref{lem:corner-independent} gives $f_1\le5$.
If equality holds, these five factors form a basis of $U$. Every other
facet factor has rank two. Lemma~\ref{lem:five-factors} requires at least
six terms in its expansion in that basis, a contradiction.
\end{proof}

For a set $T$ of facets, let $e(T)$ be the number of polygon vertices incident to at least one facet in $T$. Keeping track of these incidences sharpens the quartic bound.

\begin{lemma}\label{lem:elliptic-corners}\label{lem:quartic-refinement}
Suppose $X_W$ is a reduced geometrically integral complete intersection of two quadrics. If $T$ consists of three rank-one facets, then
\[
 \nu_1\le e(T)+4.
\]
If $X_W$ is smooth and $T$ consists of two rank-one facets, then $\nu_1\le e(T)+8$.
\end{lemma}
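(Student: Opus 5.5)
The plan is to exploit the fact that a rank-one facet factor $A=aa^{\mathsf T}$ turns the corresponding slack on the rank-one locus into the square of a linear form. For $[z]\in X_W$ the slack is $\tr(aa^{\mathsf T}zz^{\mathsf T})=(a^{\mathsf T}z)^2$. I would therefore replace the quadratic series of \eqref{eq:corner-map} by a linear series of degree four on the normalization $\tilde C$ of the quartic $X_W$, and then redo the Wronskian count of Lemma~\ref{lem:corner-curve} for this smaller series.

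\emph{Step 1 (coordinates from $T$).} Let $T=\{F_1,F_2,F_3\}$ have factors $a_ia_i^{\mathsf T}$, and put $l_i=a_i^{\mathsf T}z$. As in the proof of Lemma~\ref{lem:five-factors}, three facet slacks of a polygon are linearly independent affine functions, so $(\ell_1:\ell_2:\ell_3)$ are homogeneous coordinates on the plane of $P$. On real points of $X_W$ the corner map is therefore $[z]\mapsto[l_1^2:l_2^2:l_3^2]$. This is the composite of the degree-four map $\psi=[l_1:l_2:l_3]$, which comes from hyperplane sections of the quartic, with the squaring map $\sigma$ of $\mathbb P^2$. The forms $l_i$ are independent on $X_W$ because $X_W$ spans $\mathbb P^3$, so $\psi$ is a $g^2_e$ with $e\le4$ after removing base points.

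\emph{Step 2 (corners off $T$ force ramification of $\psi$).} Take a rank-one vertex $v$ not incident to any facet of $T$, attained at a real point $p\in\tilde C$. Then $l_1l_2l_3(p)\ne0$, so $\sigma$ is a local diffeomorphism near $\psi(p)$. The stationarity argument of Lemma~\ref{lem:corner-curve} (Figure~\ref{fig:corner-stationarity}) then forces $d\psi(p)=0$, so $p$ has ramification weight at least two for $\psi$. The Plücker formula bounds the total weight by $3e+6g-6\le6+6g$. This already gives the claimed bound when $g=0$: at most three such corners, plus at most one from the singular point without a real preimage. That yields $\nu_1\le e(T)+4$, since the vertices incident to $T$ contribute at most $e(T)$.

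\emph{Step 3 (the genus-one case, the main obstacle).} For smooth $X_W$, Step 2 gives total weight $12$ and hence only $6$ non-$T$ corners. The missing weight must be extracted at the zeros of the $l_i$. Each $l_i$ cuts $X_W$ in a divisor of degree four that contains the two endpoints of $F_i$. The other two zeros lie in $P$'s plane on the line $\ell_i=0$. I would try to show that these extra zeros either are ramification points of $\psi$ or coincide with endpoints, using that the image is confined to the triangle-like region cut out by $\ell_j\ge0$. Near a point where a single $l_i$ vanishes, the local coordinate $l_i^2/l_j^2$ is already stationary. Containment in $P$ then forces the other coordinate to be stationary as well unless the point is a vertex on $F_i$. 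If this holds, it absorbs weight $2$ from each of at least two of the three lines. That leaves at most $8$ of the $12$ units, hence at most four non-$T$ corners.

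For two facets on a smooth curve, I would apply the same construction to the series spanned by $l_1,l_2$ together with the normalizing quadric $z^{\mathsf T}Hz$. This is a mixed series, and I expect the Wronskian budget to leave room for exactly eight corners off $T$. Making this zero-by-zero accounting rigorous, including the tangency and multiplicity cases, is where I expect the real difficulty to lie.
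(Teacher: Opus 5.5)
\textbf{There is a genuine gap: the genus-one case of the first assertion and the whole second assertion are not proved.}

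Your Steps 1--2 set up the same reduction as the paper. The corner map is the projection $\psi=[a_1^{\mathsf T}z:a_2^{\mathsf T}z:a_3^{\mathsf T}z]$ followed by coordinatewise squaring. Squaring is a local diffeomorphism off the coordinate lines, so every corner off $T$ is a stationary point of $\psi$. In genus zero your Pl\"ucker count is a valid alternative and gives at most $3+1$ such corners. You should still prove that the $a_i$ are linearly independent before using nondegeneracy of $X_W$; the paper does this with Lemma~\ref{lem:corner-linear}.

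The genus-one case is explicitly covered by the lemma and is needed in Corollary~\ref{cor:five-eight}. Your route for it fails, for two reasons.
\begin{enumerate}
\item The Wronskian weight $12$ is shared between stationary points, which have weight at least two, and flexes, which have weight one. A total-weight count cannot separate them, so it cannot get below six corners.
\item The extra weight you hope to find does not exist. Take a real zero of a single $l_i$ in the relative interior of the edge $F_i$. There the polygon is locally a half-plane, and containment only requires the image to touch the edge. That is automatic, because the slack equals $l_i^2$. Nothing forces the tangential coordinate to be stationary, so such points need not ramify $\psi$. At nonreal zeros of $l_i$, containment gives no information at all.
\end{enumerate}
The two-facet ``mixed series'' is likewise only an expectation, not an argument.

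\textbf{The missing idea} is to replace the weight count by the geometry of the image of $\psi$.
\begin{itemize}
\item The map $\psi$ is a base-point-free $g^2_4$. Its real center lies off $X_W$, since otherwise a real rank-one matrix would lie on three facets. Its image is not a line.
\item Hence $\psi$ is either birational onto a plane quartic or a double cover of a conic.
\item In the birational case, a point with zero differential maps to a singular point of the image. Distinct corners have distinct images, so there are at most $\delta=3-g$ such corners.
\item In the double-cover case, these points are ramification points of the cover. By Riemann--Hurwitz there are at most $2g+2$ of them.
\item Add the at most $1-g$ real points of $X_W$ without real preimages. For $g\in\{0,1\}$ this gives at most four corners off $T$ in both cases.
\end{itemize}

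For two facets on a smooth $X_W$, use the pencil $[l_1:l_2]$ alone.
\begin{itemize}
\item After removing base points, it is a map of degree at most four from a genus-one curve to $\mathbb P^1$. By Riemann--Hurwitz it has at most eight ramification points.
\item At a corner off $T$, the ratio of the two slacks, $(l_1/l_2)^2$, is stationary. Since $l_1/l_2$ is finite and nonzero there, this forces $d(l_1/l_2)=0$, so the pencil ramifies.
\end{itemize}
This bounds the corners off $T$ by eight.
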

\begin{proof}
For three facets write their factors as $a_i a_i^{\mathsf T}$, $1\le i\le3$. The vectors $a_i$ are independent: otherwise their three distinct projectors would span the symmetric matrices on a two-dimensional space, contrary to Lemma~\ref{lem:corner-linear}. The three facet slacks are also independent, since three distinct vertices of the polar polygon are not collinear. In these coordinates the corner map is the linear projection
\[
 \rho:X_W\longrightarrow\mathbb P^2,\qquad [z]\longmapsto[a_1^{\mathsf T}z:a_2^{\mathsf T}z:a_3^{\mathsf T}z],
\]
followed by coordinatewise squaring. Its center is a real projective point outside $X_W$: otherwise a real rank-one matrix would map to a point on three distinct polygon facets.

On the normalization, $\rho$ defines a basepoint-free series of degree four. Its image is not a line, because $X_W$ is nondegenerate. Thus it is birational onto a plane quartic or a double cover of a conic. Write $g\in\{0,1\}$ for the normalization genus. There are at most $1-g$ real points of $X_W$ without real preimages on the normalization, since its arithmetic genus is one.

In the birational case the plane quartic has total delta invariant $3-g$. Points with zero differential map to singularities, so there are at most $3-g$ such images. Adding the at most $1-g$ exceptional real points gives at most $4-2g\le4$ possible corner images away from $T$. In the double-cover case, Riemann--Hurwitz gives $2g+2$ ramification points, and adding the exceptional real points gives at most $g+3\le4$. Coordinatewise squaring is unramified off the three coordinate lines, while the derivative of the corner map vanishes at every real normalization point mapping to a corner, by Lemma~\ref{lem:corner-curve}. This proves the first assertion.

For two facets, with vectors $a,b$, consider the pencil $[a^{\mathsf T}z:b^{\mathsf T}z]$ on the smooth genus-one curve. Removing its base divisor gives a nonconstant map to $\mathbb P^1$ of degree at most four, hence with at most eight ramification points. At a corner off these facets, the derivative of the ratio of their slacks is zero. That ratio is $(a^{\mathsf T}z/b^{\mathsf T}z)^2$, whose square root is finite and nonzero there, so the pencil is ramified. This bounds the corners off $T$ by eight.
\end{proof}

\begin{corollary}\label{cor:five-eight}
If $(u,w)=(5,8)$, then $N\le12$. If at least three facet factors have rank one, then $N\le11$.
\end{corollary}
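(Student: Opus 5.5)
The plan is a case analysis on the rank-one facet count $f_1$ and on the type of $X_W$. The first bound comes from the corner bounds for $W$ and the second from incidence counting. Throughout, $N\ge6$, since otherwise there is nothing to prove.

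\emph{Facet-side bounds.} Since $u=5$, Corollary~\ref{cor:five-dim-facets} gives $f_1\le4$. By Proposition~\ref{prop:rank-budget}, every factor has rank one or two, and a rank-two vertex is incident only to rank-one facets. Its two facets are therefore consecutive rank-one facets. Hence the rank-two vertices are interior vertices of runs of rank-one facets, and so
\[
 \nu_2\le f_1-\rho,
\]
where $\rho$ is the number of cyclic runs of rank-one facets. If every facet had rank one, then $N=f_1\le4$, which is impossible. So $\rho\ge1$ whenever $f_1\ge1$, and $\nu_2\le\max(f_1-1,0)\le3$. (If $f_1=0$, then $\nu_2=0$ by the incidence restriction.)

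\emph{Vertex-side bounds.} Since $\dim W=8$, Lemma~\ref{lem:corner-loci} splits into two cases.
\begin{enumerate}[label=\textup{(\alph*)},leftmargin=2em]
\item If $X_W$ is not a persistent integral quartic, the proof of that lemma gives $\nu_1\le c(W)\le8$. Hence $N=\nu_1+\nu_2\le11$.
\item Otherwise $X_W$ is the complete intersection of the two quadrics spanning $W^\perp$. It is reduced and geometrically integral, with arithmetic genus one.
\end{enumerate}
If this quartic is singular, the proof of Lemma~\ref{lem:corner-loci} gives $\nu_1\le10$. Within the singular quartic case, Lemma~\ref{lem:elliptic-corners} applies whenever $f_1\ge3$, so the real task is $f_1\le2$. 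There $\nu_2\le f_1-1\le1$, and so $N\le11$.

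\emph{Smooth quartic.} This case is the main obstacle, and I would use Lemma~\ref{lem:elliptic-corners} with $T$ a set of rank-one facets. I read the lemma as bounding the rank-one vertices off $T$ by $4$ or $8$, respectively, with every vertex on $T$ counted in $e(T)$.

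If $f_1\ge3$, choose $T$ to be three rank-one facets. I claim every rank-two vertex lies on $T$.
\begin{itemize}
\item When $f_1=3$, this is immediate.
\item When $f_1=4$, a rank-two vertex lies on two rank-one facets. At most one of these can be the omitted facet, so the other lies in $T$.
\end{itemize}
Then
\[
 N\le e(T)+4\le6+4=10.
\]
This gives the second assertion, together with cases (a) and the singular-quartic bound for $f_1\ge3$.

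If $f_1=2$, take $T$ to be both rank-one facets. At most one rank-two vertex exists, namely their common vertex when they are adjacent, and it lies on $T$. This gives
\[
 N\le e(T)+8\le4+8=12.
\]
If $f_1\le1$, then $\nu_2=0$ and $N=\nu_1\le c(W)\le12$.

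The delicate points are confirming that the vertices on $T$, of either rank, are exactly what $e(T)$ counts in Lemma~\ref{lem:elliptic-corners}, and that the run count really forces every rank-two vertex onto $T$. The run argument is also where $f_1\le4$ from Corollary~\ref{cor:five-dim-facets} is essential.
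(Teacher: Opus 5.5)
Your argument is correct and follows the paper's case analysis: Corollary~\ref{cor:five-dim-facets} gives $f_1\le4$, the run bound gives $\nu_2\le f_1-\rho_F$, Lemma~\ref{lem:corner-loci} splits the cases for $X_W$, and Lemma~\ref{lem:elliptic-corners} handles the integral quartic.

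The only difference is bookkeeping. The paper uses the stated inequality $\nu_1\le e(T)+4$ and then adds $\nu_2$; this needs a run-dependent choice of $T$ when $f_1=4$ and gives only $N\le 11$ there. You instead use the off-$T$ form (at most four, resp.\ eight, rank-one corners off $T$), which is exactly what the proof of Lemma~\ref{lem:elliptic-corners} establishes, and you note that every rank-two vertex lies on $T$. This avoids double-counting the rank-two vertices on $T$ and gives $N\le e(T)+4\le 10$ for $f_1\in\{3,4\}$.
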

\begin{proof}
Assume $N\ge6$ and put $f=f_1\le4$. If $0<f<N$, let $\rho_F$ count the cyclic runs of rank-one facets, equivalently of rank-two facets. A rank-two vertex can occur only between two rank-one facets, so
\begin{equation}\label{eq:adjacency}
 \nu_2\le f-\rho_F,\qquad N\le \nu_1+f-\rho_F.
\end{equation}
If $X_W$ is not a reduced geometrically integral quartic, the proof of Lemma~\ref{lem:corner-loci} gives $\nu_1\le8$, and hence $N\le11$ when $f>0$; if $f=0$, all vertices are rank one and $N\le8$. We may therefore assume that $X_W$ is an integral quartic.

For $f=3$, take all three facets as $T$, so $e(T)=3+\rho_F$. Lemma~\ref{lem:quartic-refinement} and \eqref{eq:adjacency} give $N\le10$. For $f=4$, choose three facets with $e(T)\le3+\rho_F$. If $\rho_F=1$, take three consecutive facets; if $\rho_F=2$, take a consecutive pair and a third facet; if $\rho_F\ge3$, any triple suffices. The same inequalities give $N\le11$.

If $f\le1$, all vertices are rank one, so $N\le12$. The same holds if $f=2$ and the two facets are nonadjacent. If they are adjacent, $\nu_2\le1$. For a singular integral quartic, $\nu_1\le10$ gives $N\le11$. For a smooth quartic the two facets have three incident vertices, and the second assertion of Lemma~\ref{lem:quartic-refinement} gives $\nu_1\le11$, hence $N\le12$.
\end{proof}

\begin{proof}[Proof of Theorem~\ref{cor:rank-four}]
Let $P$ be an $N$-gon with a size-four PSD factorization. We may assume
$N\ge6$ and, by polarity, $u\le w$. Equation~\eqref{eq:factor-spans}
then gives $u\le6$.
If $u\le4$, the compact strictly feasible slice $K_U$ has affine dimension
$u-1\le3$. For a three-dimensional slice, a generic planar shadow has
type $(4,2,1)$: matrix order four, image dimension two, and one eliminated
variable. In \cite[Theorem 1.1 and Table 1]{SinnSturmfels2015}, the entry
$p=1$, $n=4$ has only the rank-three branch, of degree
$\delta(2,4,3)=12$. Thus its algebraic boundary has degree twelve.
For a two-dimensional slice there are no eliminated variables, and
the determinant bound is four.
The degree bound persists under degeneration. To see this, write the
slice in parameter form as
\[
 K_0=\{t:B_0+\mathcal A_0(t)\succeq0\},
 \qquad B_0\succ0,
\]
with $\mathcal A_0$ injective. Take generic perturbations
$B_\epsilon\to B_0$, $\mathcal A_\epsilon\to\mathcal A_0$ and put
$K_\epsilon=\{t:B_\epsilon+\mathcal A_\epsilon(t)\succeq0\}$.
Compactness of $K_0$ gives a uniform $\eta>0$ such that
$\lambda_{\min}(\mathcal A_0(t))\le-\eta$ on the unit sphere.
For small $\epsilon$ this remains at most $-\eta/2$, so every
$t\in K_\epsilon$ satisfies $\|t\|\le2\|B_\epsilon\|/\eta$.
Thus the perturbed sets are uniformly bounded, and strict feasibility
persists.

For $t\in K_0$ and fixed $0<\delta<1$, one has
\[
 B_0+\mathcal A_0((1-\delta)t)
 =(1-\delta)(B_0+\mathcal A_0(t))+\delta B_0
 \succeq\delta\lambda_{\min}(B_0)I.
\]
The positive margin is uniform on $K_0$, so
$(1-\delta)K_0\subseteq K_\epsilon$ for sufficiently small $\epsilon$.
Conversely, every limit of feasible parameters from $K_\epsilon$ lies
in $K_0$, by continuity of the pencil and closedness of the PSD cone.
Uniform boundedness and then $\delta\downarrow0$ give Hausdorff
convergence. Choose generic projections converging to the original
one; their planar images converge as well.

Normalize the resulting boundary polynomials to coefficient norm one
and take a convergent subsequence. The limit is a nonzero polynomial
of degree at most twelve. Boundaries of full-dimensional compact convex
bodies converge under this Hausdorff limit, so the polynomial vanishes
on every polygon edge. Its restriction to each edge line vanishes on
a nonempty interval, hence identically. The distinct edge lines
therefore divide it, giving $N\le12$.

For the remaining dimensions, Corollary~\ref{cor:five-dim-facets}, Lemma~\ref{lem:corner-loci}, and the run bound \eqref{eq:factor-runs} give the following preliminary bounds. The case $(u,w)=(5,8)$ uses Corollary~\ref{cor:five-eight}.
\begin{center}
\begin{tabular}{@{}ccccc@{}}
\toprule
$u$ & $w$ & $f_1$ & $\nu_1$ & Preliminary bound \\
\midrule
$\le4$ & $u\le w\le13-u$ & --- & --- & $N\le12$ \\
$5$ & $5\le w\le7$ & $\le4$ & $\le8$ & $N\le11$ \\
$5$ & $8$ & $\le4$ & $\le12$ & $N\le12$ \\
$6$ & $6$ & $\le6$ & $\le6$ & $N\le11$ \\
$6$ & $7$ & $\le6$ & $\le8$ & $N\le13$ \\
\bottomrule
\end{tabular}
\end{center}
If one side is entirely rank one, $N$ equals its rank-one count and gives an even smaller bound, except in the already settled $(5,8)$ case. These possibilities exhaust $u\le w$ and $u+w\le13$. Thus thirteen vertices could occur only when $(u,w)=(6,7)$. Proposition~\ref{prop:exclude-thirteen} below excludes that configuration, proving $N\le12$.

The exact octagon factorization in \cite[Example 5.2]{VGG2018} gives
$V_2(4)\ge8$. By \cite[Proposition 3.2]{GRT2013}, a polytope of PSD
rank at most four has dimension at most three. The three-dimensional
classification \cite[Theorem 4.11]{GRT2013} gives at most eight vertices
in dimension three; segments have two. Hence $V(4)=V_2(4)$.
\end{proof}

\subsection{Excluding the last configuration}\label{sec:exclude-thirteen}
The equality case in the run bound fixes the cyclic ranks. Restricting the six rank-one facet forms to the kernel of one rank-two facet gives six distinct points on a projective line. Cubic interpolation then places the facet vectors on a moment curve, and the vertex factors become cubics whose consecutive pairs share two roots. Their squares span a four-dimensional space. Three squares vanishing at one point force every element of that space which vanishes there to have a double zero. A suitable difference of two other squares has a simple zero, giving the contradiction.

\begin{proposition}\label{prop:exclude-thirteen}
A thirteen-gon has no size-four PSD slack factorization with $(u,w)=(6,7)$.
\end{proposition}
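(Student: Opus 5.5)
The plan is to pin down the rank pattern first and then reduce everything to polynomials in one variable. With $(u,w)=(6,7)$, Theorem~\ref{lem:corner-independent} gives $f_1\le6$, and Lemma~\ref{lem:corner-loci} gives $\nu_1\le8$ since $w=7$. The run bound \eqref{eq:factor-runs} gives $N\le f_1+\nu_1-\max(\rho_F,\rho_V)$. So $N=13$ forces $f_1=6$, $\nu_1=8$, $\rho_F=\rho_V=1$, $f_2=7$ and $\nu_2=5$. If instead one side were entirely rank one, $N\le8$; so both sides contain rank-two factors.

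By Proposition~\ref{prop:rank-budget}, incident factors cannot both have rank two. The single run of seven rank-two facets therefore has eight rank-one vertices: its interior six plus its two endpoints. The six rank-one facets then form one consecutive run, and the five vertices strictly inside it are the rank-two vertices. The cyclic rank pattern is thus completely determined. The six rank-one facet factors $a_ia_i^{\mathsf T}$ are linearly independent by Theorem~\ref{lem:corner-independent}, so they form a basis of $U$.

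Next, fix a rank-two facet factor $A$ and restrict the six forms $a_i$ to $\ker A$, which is two-dimensional. Lemma~\ref{lem:five-factors}, applied to the expansion of $A$ in this basis, shows the expansion has full support. Using that expansion, I would argue that the restrictions $a_i|_{\ker A}$ give six distinct points of $\mathbb P^1$. I would then use the fact that, on the pencil of kernels, the quadrics in $U^\perp$ impose linear conditions. Cubic interpolation through six points should then identify the $a_i$, after a change of basis of $\R^4$, with points $(1,t_i,t_i^2,t_i^3)$ of a moment curve.

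In these coordinates each rank-one vertex vector $b_v$ defines a cubic $\beta_v(t)=b_v\cdot(1,t,t^2,t^3)$, and the slack of facet $i$ at $v$ is $\beta_v(t_i)^2$. Two consecutive rank-one vertices lying on a common rank-one facet share a root at that $t_i$. A rank-one vertex meeting two rank-one facets vanishes at two of the $t_i$. I would track, for the vertices at the ends of the rank-one facet run and along the rank-two facet run, which $t_i$ are roots. The goal is to show that consecutive pairs share two roots.

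The squares $\beta_v^2$ lie in $W$ restricted to the moment curve. Restricting $W$ to the Veronese image of the moment curve, I expect a four-dimensional space of sextics. Take three consecutive vertex cubics that all vanish at some $t_0$ and span this space modulo the relations. I will then show that every element of the space vanishing at $t_0$ vanishes there to order two. The contradiction comes from another consecutive pair: a suitable combination $\beta_v^2-c\,\beta_{v'}^2=(\beta_v-\sqrt c\beta_{v'})(\beta_v+\sqrt c\beta_{v'})$ has a simple zero at $t_0$.

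The main obstacle is the moment-curve step, namely justifying that the six facet vectors lie on a twisted cubic compatible with the rank-two kernels, and that the sextic space is exactly four-dimensional. I would try to obtain this from $\dim U^\perp=4$ and $\dim(W\cap U^\perp)=4$. Concretely, the quadrics vanishing on the six points $a_i$ should cut out a twisted cubic through them.
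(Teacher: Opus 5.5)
Your derivation of the forced rank pattern agrees with the paper. So does the dimension count for the sextic space once $U$ lies in the Hankel space $\mathcal H$, via $\mathcal H^\perp\subset U^\perp\subset W$. The gaps are in the two steps that carry the argument.

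\emph{The moment-curve step.} Your proposal gives no working mechanism here, and the concrete route you suggest is false. $U^\perp$ is exactly the four-dimensional space of quadrics through the six points $[a_i]$. The quadrics containing a twisted cubic form only a three-dimensional space, so $U^\perp$ never cuts out a twisted cubic. What you need is that a three-dimensional subspace does, namely $\mathcal H^\perp\subset U^\perp$ after a congruence.

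The paper obtains this from one rank-two facet by linear algebra. Write $A=FDF^{\mathsf T}$ with $F=[a_1\ \cdots\ a_6]$ of row rank four and $D$ invertible diagonal (full support by Lemma~\ref{lem:five-factors}). For a basis $K$ of $\ker A$, the identity $F(DF^{\mathsf T}K)=0$ shows that the two columns of $DF^{\mathsf T}K$, whose rows are $c_i(1,t_i)$, span the two-dimensional space $\ker F$. The Lagrange identities $\sum_it_i^j/p'(t_i)=0$ for $0\le j\le4$, with $p(t)=\prod_i(t-t_i)$, show that $\ker\mathsf V$ is spanned by $(1/p'(t_i))_i$ and $(t_i/p'(t_i))_i$, where $\mathsf V=[v(t_1)\ \cdots\ v(t_6)]$. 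Equal kernels give $F=T\mathsf V\operatorname{diag}(1/(c_ip'(t_i)))$, hence $U\subseteq\mathcal H$. Quoting the twisted cubic through six points instead would require general linear position of the $a_i$, which you have not shown.

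\emph{The shared-roots step.} This cannot come from the $t_i$ at all. Label $A_i=[v_i,v_{i+1}]$, with $A_0,\ldots,A_6$ of rank two and $B_j=q_jq_j^{\mathsf T}$ of rank one for $0\le j\le7$. Then no two consecutive rank-one vertices lie on a common rank-one facet, and no rank-one vertex meets two rank-one facets. The only such incidences are $v_0\in A_{12}$ and $v_7\in A_7$. Writing $h_j(t)=q_j^{\mathsf T}v(t)$ for your cubic $\beta_{v_j}$, your tracking gives one root to $h_0$ and to $h_7$, and none to $h_1,\ldots,h_6$.

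The missing idea is a moment representation of the rank-two facets. Once $U\subseteq\mathcal H$, each $A_i$ is a PSD Hankel matrix. Every nonnegative binary sextic is a sum of two squares of cubics, and the cone generated by the atoms $v(s)v(s)^{\mathsf T}$ is closed, so $A_i$ is a positive combination of such atoms. Rank two forces exactly two atoms $s_i\ne r_i$. Then $A_iq_i=A_iq_{i+1}=0$ makes $h_i$ and $h_{i+1}$ vanish at both. Your point $t_0$ is then the atom shared by the distinct pairs of $A_0$ and $A_1$, which is a common root of $h_0,h_1,h_2$.

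You would still need three further facts:
\begin{enumerate}
\item the sextic space has no base point, and any three distinct squares are independent, so that $h_0^2,h_1^2,h_2^2$ form a basis of its three-dimensional subspace vanishing at $t_0$;
\item at most four cubics vanish at a point, so that some consecutive pair among $h_3,\ldots,h_7$ avoids $t_0$;
\item that pair shares a quadratic factor $Q$ with $Q(t_0)\ne0$, which is what guarantees that your difference of squares has a simple zero at $t_0$.
\end{enumerate}
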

\begin{proof}
Suppose such a factorization exists, with $A_i$ indexed by the edge $[v_i,v_{i+1}]$ and $B_i$ by $v_i$, cyclically modulo thirteen. The rank-one counts are at most six and eight. Both sides contain rank-two factors, and equality is forced throughout
\[
 13\le f_1+\nu_1-\max(\rho_F,\rho_V)\le6+8-1=13.
\]
After cyclic relabelling, $A_0,\ldots,A_6$ have rank two, $A_7,\ldots,A_{12}$ have rank one, and $B_0,\ldots,B_7$ have rank one. Write
\[
 A_{6+i}=a_i a_i^{\mathsf T}\quad(1\le i\le6),\qquad
 B_j=q_jq_j^{\mathsf T}\quad(0\le j\le7).
\]
By Theorem~\ref{lem:corner-independent}, the six facet projectors form a basis of $U$, and any seven of the eight vertex projectors form a basis of $W$. The rank-three trace pairing and $u+w=13$ give
\begin{equation}\label{eq:thirteen-complements}
 U^\perp\subset W.
\end{equation}
Lemma~\ref{lem:five-factors} shows that every rank-two facet has a full-support expansion in the six rank-one facet factors, with four positive and two negative coefficients.

The parameter line comes from the kernel of a rank-two facet. Fix such a factor $A=FDF^{\mathsf T}$, where $F=[a_1\ \cdots\ a_6]$ and $D=\operatorname{diag}(d_1,\ldots,d_6)$ is invertible. The matrix $F$ has row rank four because $U$ contains a positive definite matrix. The six forms $a_i^{\mathsf T}|_{\ker A}$ are nonzero and pairwise nonproportional. A zero restriction would make its facet contain both endpoints of the different facet of $A$. Two proportional restrictions would make the corresponding slacks proportional at both endpoints, giving a linear dependence with the slack of $A$. Both conclusions contradict polygon incidence: three distinct facet slacks are independent.

Choose a basis matrix $K$ of $\ker A$ so that
\[
 d_i a_i^{\mathsf T}K=c_i(1,t_i),\qquad c_i\ne0,
\]
with the six real $t_i$ distinct. Since $F$ has row rank four and $AK=0$, the two columns of $DF^{\mathsf T}K$ form a basis of $\ker F$. Thus this kernel is spanned by $(c_i)_{i=1}^6$ and $(c_it_i)_{i=1}^6$. Put
\[
 v(t)=(1,t,t^2,t^3)^{\mathsf T},\qquad p(t)=\prod_{i=1}^6(t-t_i).
\]
Lagrange interpolation gives $\sum_i t_i^j/p'(t_i)=0$ for $0\le j\le4$. Hence, for $\mathsf V=[v(t_1)\ \cdots\ v(t_6)]$, the kernel of $\mathsf V$ is spanned by $(1/p'(t_i))_i$ and $(t_i/p'(t_i))_i$. Comparing the two kernels gives
\[
 F=T \mathsf V\operatorname{diag}\!\left(\frac1{c_i p'(t_i)}\right)
\]
for an invertible real $T$. Apply congruence by $T^{-1}$ to the facet factors and the inverse congruence to the vertex factors. Thus $U$ lies in the seven-dimensional Hankel space $\mathcal H$, spanned by $v(t)v(t)^{\mathsf T}$, with $v(\infty)=(0,0,0,1)^{\mathsf T}$.

In these coordinates, positive semidefiniteness gives a real moment representation. Write a Hankel matrix as $A_{ij}=m_{i+j}$ and associate to it the functional $L_A(t^k)=m_k$ on binary sextics, written in affine coordinates. For a cubic $h$ with coefficient vector $c$, one has $L_A(h^2)=c^{\mathsf T}Ac$. Every nonnegative real binary sextic is a sum of two squares of real binary cubics: factoring over $\mathbb C$ writes it as $g\overline g=(\Re g)^2+(\Im g)^2$. Thus $A\succeq0$ makes $L_A$ nonnegative on every nonnegative sextic. The evaluation matrices $v(t)v(t)^{\mathsf T}$, normalized to trace one and including $t=\infty$, form a compact set. Their convex hull is a compact base in the trace-one hyperplane, so the convex cone they generate is closed. Duality therefore expresses $A$ as a positive combination of these evaluation matrices.

Consequently each rank-two facet has a representation
\begin{equation}\label{eq:two-moment-atoms}
 A_i=\alpha_i v(s_i)v(s_i)^{\mathsf T}+\beta_i v(r_i)v(r_i)^{\mathsf T},\qquad
 \alpha_i,\beta_i>0,\quad s_i\ne r_i,
 \quad 0\le i\le6.
\end{equation}
All atoms lie in the two-dimensional range of $A_i$, and any three distinct points of the cubic moment curve are independent, so precisely two distinct atoms occur. Neither is one of the $t_k$, since that rank-one facet would then contain both endpoints of the facet of $A_i$. The seven unordered pairs are distinct: equal pairs would give equal kernels, making two distinct facets contain the same two endpoints.

Define the real binary cubics $h_j(t)=q_j^{\mathsf T}v(t)$, $0\le j\le7$. Orthogonality at the endpoints of $A_i$ shows that $h_i$ and $h_{i+1}$ both vanish at $s_i,r_i$. For $1\le j\le6$, the two distinct pairs coming from $A_{j-1},A_j$ lie among the roots of $h_j$; they overlap in exactly one point and give three distinct real roots. The endpoint cubics $h_0,h_7$ have, respectively, the additional roots $t_6,t_1$ from their incident rank-one facets. Thus all eight cubics have three distinct real projective roots, and no two are proportional.

Writing $v(s,t)=(s^3,s^2t,st^2,t^3)^{\mathsf T}$ for the homogeneous version of $v(t)=v(1,t)$, consider multiplication
\[
 \mu:\mathbb S^4\longrightarrow\R[s,t]_6,\qquad B\longmapsto v(s,t)^{\mathsf T}Bv(s,t).
\]
Its kernel is $\mathcal H^\perp$, of dimension three. By \eqref{eq:thirteen-complements}, $\mathcal H^\perp\subset U^\perp\subset W$, so
\begin{equation}\label{eq:four-sextics}
 \mathcal L=\mu(W)=\operatorname{span}\{h_0^2,\ldots,h_7^2\},\qquad \dim\mathcal L=4.
\end{equation}
This space has no base point over $\mathbb C$: the $q_j$ span $\R^4$, since their projectors span $W$ and $W$ contains a positive definite matrix. Any three distinct squares are independent. A dependence would express one square as a nonnegative combination of at most two others; evaluation at its three distinct real roots would make the other cubics proportional to it.

At most four cubics can vanish at a real projective point $a$. Indeed, cubics vanishing at $a$ form a three-dimensional space $E_a$. Multiplication from $\operatorname{Sym}^2E_a$ to sextics has a one-dimensional kernel, as is seen by removing the common squared linear factor and multiplying binary quadratics. Five vanishing cubics would give five independent projectors, hence at least four independent squares. But the sections of $\mathcal L$ vanishing at $a$ form a space of dimension three, a contradiction.

Let $a$ be the common root of $h_0,h_1,h_2$ coming from the node pairs of $A_0,A_1$, and choose an affine parameter in which $a$ is finite. Since $\mathcal L$ has dimension four and no base point, its subspace of elements vanishing at $a$ has dimension three. The independent squares $h_0^2,h_1^2,h_2^2$ form a basis of that subspace. Each has a double zero at $a$, so every element of $\mathcal L$ that vanishes there has a double zero.

At most four of the eight cubics vanish at $a$, so at least four among $h_3,\ldots,h_7$ do not. Two of these are consecutive, say $h_j,h_{j+1}$. Write $h_j=Q\ell$ and $h_{j+1}=Qm$, with $Q$ quadratic and $\ell,m$ nonproportional linear forms. All three are nonzero at $a$. Now
\[
 m(a)^2h_j^2-\ell(a)^2h_{j+1}^2
   =Q^2\bigl(m(a)\ell-\ell(a)m\bigr)
        \bigl(m(a)\ell+\ell(a)m\bigr)
\]
belongs to $\mathcal L$ and vanishes at $a$. On the right, the first linear factor is nonzero as a polynomial and vanishes at $a$, whereas the other factors do not vanish there. This gives a simple zero at $a$, contradicting the double zero forced by the preceding paragraph.
\end{proof}

\section{Geometric rank-four lifts for square-symmetric octagons}\label{sec:octagons}
Vandaele--Glineur--Gillis \cite[Example 5.2]{VGG2018} already gave an
explicit exact size-four PSD factorization of the regular octagon.
Here we give a geometric construction for a family of octagons and,
by polarity, for every octagon with the standard square symmetries.
The auxiliary polytope is a biplanar octahedron, whose PSD rank is known
from \cite[Theorem 4.8]{GRT2013}. The direct calculation supplies an
explicit lift of its section and recovers the known regular case.

\begin{theorem}\label{thm:octagons}
Let $h>1$ and $\varrho=h/(h+1)$. The octagon
\begin{equation}\label{eq:oct-family}
P_h=\conv\{(\pm \varrho,0),(0,\pm \varrho),(\pm\tfrac12,\pm\tfrac12)\}
\end{equation}
has real PSD rank four. All four sign choices are included in the last pair.
The regular case $h=1+\sqrt2$ recovers the rank-four result of
\cite[Example 5.2]{VGG2018}.
\end{theorem}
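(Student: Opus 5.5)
Lower bound: any polygon with at least five vertices has PSD rank at least four, by \cite[Theorem 4.7]{GRT2013}, which was already used in Proposition~\ref{prop:p3-spans}. The octagon $P_h$ is convex with eight vertices for $h>1$. The point $(\tfrac12,\tfrac12)$ lies outside the segment joining $(\varrho,0)$ and $(0,\varrho)$ exactly when $\varrho<1$, which always holds. The axis points are vertices exactly when $\varrho>\tfrac12$, i.e. $h>1$. So the lower bound is immediate, and the work is to exhibit a size-four factorization.

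Upper bound: I would realize $P_h$ as an affine section, or a projection of a section, of a three-dimensional polytope whose PSD rank is already known to be four. Lemma~\ref{lem:faces} then transfers the bound. The natural candidate is the biplanar octahedron mentioned before the theorem, with PSD rank four by \cite[Theorem 4.8]{GRT2013}. Concretely, take
\[
O_h=\conv\{(\pm1,0,0),(0,\pm1,0),(\pm\tfrac12,\pm\tfrac12,h)\}\quad\text{or a similar scaling,}
\]
with four vertices in the plane $z=0$ and four in a parallel plane, rotated by $45^\circ$. Then I would choose a slanted plane through the octahedron whose intersection is an octagon with square symmetry. Using the square symmetry, the useful sections are the horizontal slice or the projection along the axis. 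The projection of the convex hull of a square and a rotated square is exactly an octagon of the form \eqref{eq:oct-family} after rescaling. The parameter $h$ enters through the relative size of the two squares, giving the factor $\varrho=h/(h+1)$.

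So the plan is as follows. First, identify the precise biplanar octahedron $Q_h$ so that some affine image, or a section at the right height (for instance the plane where the edges between the two squares are cut in ratio $1:h$), equals $P_h$ up to a linear map. Second, invoke its PSD rank four and Lemma~\ref{lem:faces}. Third, as a check and to make it explicit, write down the factors. Following the eight-term structure of Theorem~\ref{lem:corner-independent}, I would try rank-one facet factors coming from two orthonormal bases of $\R^4$, and vertex factors of rank at most two, as forced by Proposition~\ref{prop:rank-budget}. I would solve the trace identities using the $D_4$ symmetry, so that only one facet orbit and two vertex orbits need to be checked.

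I expect the main obstacle to be matching the exact parametrization: verifying that the section or shadow of the octahedron gives precisely $\varrho=h/(h+1)$ with the diagonal vertices at $(\pm\tfrac12,\pm\tfrac12)$. I would also need to confirm that \cite[Theorem 4.8]{GRT2013} applies to that octahedron, rather than only to a special one. If the section argument proves awkward, I would fall back on the direct computation, verifying $\ell_i(v)=\tr(A_iB_v)$ orbitwise. Finally, specializing to $h=1+\sqrt2$ gives $\varrho=(1+\sqrt2)/(2+\sqrt2)=1/\sqrt2$. This is the circumradius of $(\tfrac12,\tfrac12)$, so all eight vertices lie at the same distance from the origin and the octagon is regular, which recovers \cite[Example 5.2]{VGG2018}.
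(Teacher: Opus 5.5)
Your lower bound (via \cite[Theorem 4.7]{GRT2013}, with your check that all eight points are extreme exactly when $\tfrac12<\varrho<1$) and your regular-case computation are correct. Your strategy for the upper bound, exhibiting $P_h$ as an affine section of a three-polytope of PSD rank four and invoking Lemma~\ref{lem:faces}, is the paper's strategy. The gap is that the auxiliary polytope you write down is the wrong one, and finding the right one is the whole content of the upper bound.

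Your $O_h$ is the convex hull of two squares in parallel planes with different edge directions. That is a square antiprism with eight vertices and ten facets, not an octahedron. Its polar has ten vertices, while a three-polytope of PSD rank four has at most eight (the classification \cite[Theorem 4.11]{GRT2013} used for Theorem~\ref{cor:rank-four}). By polarity $\xcPSD(O_h)\ge5$, so no section or shadow of $O_h$ can certify $\xcPSD(P_h)\le4$. ``Biplanar'' in \cite{GRT2013} refers to a six-vertex octahedron two of whose equatorial vertex quadruples are coplanar. It does not mean two parallel layers of four vertices. Your fallback of solving the trace identities orbitwise has the right shape: rank-one facet factors that become two orthonormal bases after whitening, and rank-two vertex factors, as in \eqref{eq:octagon-two-bases} and Remark~\ref{rem:explicit-lift}. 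But you produce no factors, so it does not close the gap either.

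The missing construction is
\[
 Q_h=\conv\{u_\pm,v_\pm,w_\pm\},\qquad u_\pm=(\pm1,0,1),\quad v_\pm=(0,\pm1,-1),\quad w_\pm=(0,0,\pm h).
\]
Its facets are the eight triangles $\conv\{u_s,v_t,w_\varepsilon\}$ given by \eqref{eq:oct-facets}. The quadruples $\{v_\pm,w_\pm\}$ and $\{u_\pm,w_\pm\}$ lie in the planes $x=0$ and $y=0$.

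You do not even need \cite[Theorem 4.8]{GRT2013}. The nonzero slacks are $a_\varepsilon^2=2(1-\varepsilon/h)$, $b_\varepsilon^2=2(1+\varepsilon/h)$ and $2$. Since $a_\varepsilon,b_\varepsilon$ depend only on $\varepsilon\in\{\pm1\}$, they are combinations of $1$ and $\varepsilon$. Hence the columns of the entrywise square root of the slack matrix, as functions of $(s,t,\varepsilon)$, lie in the span of $1,\varepsilon,sa_\varepsilon,tb_\varepsilon$. That matrix has rank at most four, and Lemma~\ref{lem:basic} gives $\xcPSD(Q_h)\le4$.

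The plane $z=0$ separates $\{u_+,u_-,w_+\}$ from $\{v_+,v_-,w_-\}$. It therefore crosses exactly the eight edges $u_sv_t$, $u_sw_-$, $v_tw_+$, at $(s/2,t/2)$, $(s\varrho,0)$ and $(0,t\varrho)$. The value $\varrho=h/(h+1)$ is simply where $u_sw_-$ reaches height zero.

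The quarter-turn symmetry of the section comes from the rotoreflection $(x,y,z)\mapsto(-y,x,-z)$ of $Q_h$. This map carries $\{u_\pm\}$ onto $\{v_\pm\}$ and back and swaps $w_+$ with $w_-$. It is not a rotation of a layered solid, which is why your square-symmetric two-layer guess led you astray.
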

Figure~\ref{fig:octagon-section} shows the section underlying the construction.

\begin{figure}[tbp]
\centering
\vspace*{12pt}
\begin{tikzpicture}[font=\small,line join=round]
\begin{scope}[x={(1.65cm,-.45cm)},y={(1.05cm,.70cm)},z={(0cm,.90cm)}]
\coordinate (up) at (1,0,1); \coordinate (um) at (-1,0,1);
\coordinate (vp) at (0,1,-1); \coordinate (vm) at (0,-1,-1);
\coordinate (wp) at (0,0,2.41421356); \coordinate (wm) at (0,0,-2.41421356);
\fill[figteal!7] (-1.05,-1.05,0)--(1.05,-1.05,0)--(1.05,1.05,0)--(-1.05,1.05,0)--cycle;
\draw[figteal!35,densely dashed] (-1.05,-1.05,0)--(1.05,-1.05,0)--(1.05,1.05,0)--(-1.05,1.05,0)--cycle;
\foreach \a in {up,um} \foreach \b in {vp,vm,wp,wm}
 {\draw[black!45,line width=.55pt] (\a)--(\b);}
\foreach \a in {vp,vm} \foreach \b in {wp,wm}
 {\draw[black!45,line width=.55pt] (\a)--(\b);}
\filldraw[fill=figteal!20,draw=figteal,line width=1.2pt, opacity = 0.7]
 (.70710678,0,0)--(.5,.5,0)--(0,.70710678,0)--(-.5,.5,0)--
 (-.70710678,0,0)--(-.5,-.5,0)--(0,-.70710678,0)--(.5,-.5,0)--cycle;
\foreach \a/\pos/\lab in {up/right/u_+,um/left/u_-,vp/right/v_+,vm/left/v_-,wp/above/w_+,wm/below/w_-}
 {\fill[figblue] (\a) circle (1.5pt);\node[\pos] at (\a) {$\lab$};}
\node[figteal,below right] at (1.05,-1.05,0) {$z=0$};
\end{scope}
\draw[-{Stealth[length=2mm]},thick,figteal] (3.1,0)--node[above,font=\footnotesize]{section} (4.3,0);
\begin{scope}[shift={(6.55,0)},scale=2.2]
\draw[black!30,-{Stealth[length=1.5mm]}] (-.95,0)--(.98,0) node[right] {$x$};
\draw[black!30,-{Stealth[length=1.5mm]}] (0,-.95)--(0,.98) node[above] {$y$};
\filldraw[fill=figteal!12,draw=figteal,line width=1.2pt]
 (.70710678,0)--(.5,.5)--(0,.70710678)--(-.5,.5)--
 (-.70710678,0)--(-.5,-.5)--(0,-.70710678)--(.5,-.5)--cycle;
\foreach \x/\y in {.70710678/0,.5/.5,0/.70710678,-.5/.5,-.70710678/0,-.5/-.5,0/-.70710678,.5/-.5}
 {\fill[figteal] (\x,\y) circle (.7pt);}
\node[below right,font=\footnotesize] at (.70710678,0) {$(\varrho,0)$};
\node[above right,font=\footnotesize] at (.5,.5) {$(\tfrac12,\tfrac12)$};
\end{scope}
\node at (0,-3) {$Q_h\subset\mathbb R^3$};
\node at (6.55,-3) {$P_h=Q_h\cap\{z=0\}$};
\end{tikzpicture}
\caption{The octagon construction in Theorem~\ref{thm:octagons}, shown
at $h=1+\sqrt2$ and $\varrho=h/(h+1)=1/\sqrt2$ (the regular case).
The six vertices of $Q_h$ lie above or below the section plane.
Four edges $u_sv_t$ give $(s/2,t/2)$; the edges $u_sw_-$ and $v_tw_+$
give the four axial vertices. Thus a plane cuts the size-four lifted
octahedron in an octagon. The same construction works for every $h>1$.}
\label{fig:octagon-section}
\end{figure}
\begin{proof}
Consider
\[
Q_h=\conv\{u_+,u_-,v_+,v_-,w_+,w_-\},\quad
u_\pm=(\pm1,0,1),\quad v_\pm=(0,\pm1,-1),\quad w_\pm=(0,0,\pm h).
\]
For $s,t,\varepsilon\in\{\pm1\}$ the eight facet inequalities are
\begin{equation}\label{eq:oct-facets}
s(1-\varepsilon/h)x+t(1+\varepsilon/h)y+(\varepsilon/h)z\le1.
\end{equation}
Each is tight at exactly $u_s,v_t,w_\varepsilon$ and strict at the other
three vertices. These eight distinct supporting triangles are all the
facets: a three-dimensional polytope with six vertices has at most eight
facets, by Euler's formula and the fact that every facet has at least three
edges. In particular, $Q_h$ is a combinatorial octahedron.

Set
\[
a_\varepsilon=\sqrt{2(1-\varepsilon/h)},\qquad
b_\varepsilon=\sqrt{2(1+\varepsilon/h)}.
\]
In the column order $u_+,u_-,v_+,v_-,w_+,w_-$, the nonnegative square root of
the slack matrix has row
\begin{equation}\label{eq:root-row}
R_{s,t,\varepsilon}=\left(
\frac{1-s}{2}a_\varepsilon,\frac{1+s}{2}a_\varepsilon,
\frac{1-t}{2}b_\varepsilon,\frac{1+t}{2}b_\varepsilon,
\frac{1-\varepsilon}{\sqrt2},\frac{1+\varepsilon}{\sqrt2}\right).
\end{equation}
Both $a_\varepsilon$ and $b_\varepsilon$ are linear combinations of $1$ and
$\varepsilon$, since $\varepsilon$ takes two values. The six columns
therefore lie in the span of the four functions
\[
1,\quad\varepsilon,\quad s a_\varepsilon,\quad t b_\varepsilon.
\]
Thus $\rank R\le4$, and $\xcPSD(Q_h)=4$ by Lemma~\ref{lem:basic}.

The plane $z=0$ meets the edges $u_sv_t$, $u_sw_-$ and $v_tw_+$ in,
respectively,
\[
(s/2,t/2,0),\qquad(s\varrho,0,0),\qquad(0,t\varrho,0).
\]
These are all the edge intersections with that plane, so
$Q_h\cap\{z=0\}$ is $P_h$. Since $1/2<\varrho<1$, all eight displayed points are
vertices. Lemma~\ref{lem:faces} gives $\xcPSD(P_h)\le4$; the classification
of polygons of PSD rank three gives the reverse inequality
\cite[Theorem 4.7]{GRT2013}.

For $h=1+\sqrt2$ one has $\varrho=1/\sqrt2$. Multiplying $P_h$ by $\sqrt2$ gives
the octagon with vertices $(\pm1,0)$, $(0,\pm1)$ and
$(\pm1/\sqrt2,\pm1/\sqrt2)$, which is regular.
\end{proof}

\begin{remark}\label{rem:explicit-lift}
Define $a=\sqrt{2(1-1/h)}$, $b=\sqrt{2(1+1/h)}$,
$\alpha=(a+b)/2$, $\beta=(a-b)/2$, and
\[
q_{s,t,\varepsilon}=(1,\varepsilon,s a_\varepsilon,t b_\varepsilon)^{\mathsf T}.
\]
The six coefficient vectors in the factorization
$R_{ij}=q_i^{\mathsf T}c_j$ are
\begin{align*}
c_{u_\pm}&=\tfrac12(\alpha,\beta,\mp1,0)^{\mathsf T},&
c_{v_\pm}&=\tfrac12(\alpha,-\beta,0,\mp1)^{\mathsf T},\\
c_{w_\pm}&=\tfrac1{\sqrt2}(1,\mp1,0,0)^{\mathsf T}.&&
\end{align*}
In particular the polygon in \eqref{eq:oct-family} has the lift
\begin{equation}\label{eq:explicit-lift}
P_h=\left\{(x,y):\begin{array}{l}
\text{there exists }X\in\PSD4\text{ such that, for all }s,t,\varepsilon\in\{\pm1\},\\[2pt]
q_{s,t,\varepsilon}^{\mathsf T}Xq_{s,t,\varepsilon}
=1-s(1-\varepsilon/h)x-t(1+\varepsilon/h)y
\end{array}\right\}.
\end{equation}
For necessity, nonnegativity of the left side gives the inequalities of the
section. For sufficiency, each vertex of $Q_h$ lifts to $c_jc_j^{\mathsf T}$;
convex combinations lift every point of $Q_h$, including its section.
The polygon's vertex factors obtained this way may have rank two, because a
section vertex lies in an edge of $Q_h$. The construction does not assert a
rank-four square root of the octagon's own slack matrix.
\end{remark}

The eight facet factors also realize the first dependence in
Theorem~\ref{lem:corner-independent}. Directly from their coordinates,
\begin{equation}\label{eq:octagon-two-bases}
 \sum_{st\varepsilon=1}q_{s,t,\varepsilon}q_{s,t,\varepsilon}^{\mathsf T}
 =\sum_{st\varepsilon=-1}q_{s,t,\varepsilon}q_{s,t,\varepsilon}^{\mathsf T}
 =\operatorname{diag}(4,4,8,8).
\end{equation}
Their images are the eight vertices of the polar octagon, so any seven
are independent. Whitening the common positive definite sum turns the
two groups of four vectors into orthonormal bases. Consequently,
$\dim U=7$ and $c(U)=8$, so the same construction attains the corner
bound in Lemma~\ref{lem:corner-loci} and realizes the first dependence
allowed by Theorem~\ref{lem:corner-independent}.

\begin{lemma}
\label{lem:rank-three-lines}
Let
\[
 X(x,y,\tau)=C+xD_x+yD_y+\tau_1K_1+\tau_2K_2+\tau_3K_3,
 \qquad D_x,D_y,K_j,C\in\mathbb S^4,
\]
be a strictly feasible pencil with compact spectrahedron
$Q\subseteq\mathbb R^5$ and planar shadow $P=\pi(Q)$, where
$\pi(x,y,\tau)=(x,y)$. Define
\[
 Z=\{[z]\in\mathbb P^3_{\mathbb C}:z^{\mathsf T}K_jz=0, j=1,2,3\},
 \qquad
 Z_{\mathrm{act}}=\{[z]\in Z:
 (z^{\mathsf T}D_xz,z^{\mathsf T}D_yz)\ne(0,0)\}.
\]
If $Z$ is finite, then $|Z|\le8$. The critical values of $\pi$ on the
smooth rank-three locus of $\{\det X=0\}\subseteq\mathbb C^5$ lie on
the at most eight lines
\begin{equation}\label{eq:branch-line}
 L_z:\quad z^{\mathsf T}Cz+xz^{\mathsf T}D_xz+yz^{\mathsf T}D_yz=0,
 \qquad [z]\in Z_{\mathrm{act}}.
\end{equation}
If, in addition, $P$ is a polygon and the relative interior of every edge contains
an open segment of points admitting rank-three PSD lifts, then $P$
has at most eight edges.
\end{lemma}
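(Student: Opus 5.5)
The lemma has three claims, and I would prove them in order.

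\emph{Bound on $|Z|$.} If $Z$ is finite, it is the intersection of three quadrics in $\mathbb P^3_{\mathbb C}$. That intersection has the expected dimension zero, so B\'ezout's theorem bounds its length by $2^3=8$. In particular $|Z|\le8$, and hence $|Z_{\mathrm{act}}|\le8$.

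\emph{Branch lines.} Let $p=(x,y,\tau)$ be a smooth point of $\{\det X=0\}$ at which $X(p)$ has rank three, and let $z$ span $\ker X(p)$. The adjugate of $X(p)$ is a nonzero multiple of $zz^{\mathsf T}$. Jacobi's formula then gives
\[
 \partial\det X=c\,z^{\mathsf T}(\partial X)z,\qquad c\ne0.
\]
So the differential of $\det X$ at $p$ is proportional to the vector
\[
 \bigl(z^{\mathsf T}D_xz,\ z^{\mathsf T}D_yz,\ z^{\mathsf T}K_1z,\ z^{\mathsf T}K_2z,\ z^{\mathsf T}K_3z\bigr).
\]
Since $p$ is smooth, this vector is nonzero.

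The point $p$ is critical for $\pi$ restricted to the hypersurface exactly when the tangent hyperplane contains the $\tau$-directions. That happens exactly when the three $K_j$ components vanish, that is, $[z]\in Z$. Nonvanishing of the gradient then forces $(z^{\mathsf T}D_xz,z^{\mathsf T}D_yz)\ne(0,0)$, so $[z]\in Z_{\mathrm{act}}$.

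Next use $X(p)z=0$ and take the quadratic form:
\[
 z^{\mathsf T}Cz+xz^{\mathsf T}D_xz+yz^{\mathsf T}D_yz+\sum_j\tau_jz^{\mathsf T}K_jz=0.
\]
The last sum vanishes because $[z]\in Z$. This is exactly equation \eqref{eq:branch-line}, and it is a genuine line because its linear part is nonzero. Hence every critical value lies on some $L_z$ with $[z]\in Z_{\mathrm{act}}$.

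\emph{Edge count.} Fix an edge $e$ of $P$ and a point $(x,y)$ in the open segment of its relative interior where rank-three lifts exist. Choose a PSD lift $p\in Q$ with $X(p)$ of rank three, so $p$ lies on the boundary of $Q$.

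I would show that $p$ is critical for $\pi$ on the determinantal hypersurface. The linear functional $c^{\mathsf T}(x,y)$, with $c$ the outer normal of $e$, is maximized over $Q$ at $p$. At a smooth boundary point of $Q$, the normal of $\{\det X=0\}$ must therefore be proportional to $(c,0,0,0)$. Its $\tau$-components then vanish, so $p$ is critical.

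Smoothness at rank-three points requires care. The rank-three locus of the determinantal variety is smooth, but the hypersurface $\{\det X=0\}$ restricted to the linear pencil need not be smooth there, since the gradient could vanish. The plan is to use genericity along the open segment. Points of the segment with some rank-three lift on which the gradient of $\det X$ vanishes form a semialgebraic subset of $P$. I expect to show this subset is lower dimensional, hence finite on the segment, by examining the incidence variety of singular rank-three points. This is the main obstacle; if that fails, I would instead argue through the tangent space to the rank-three determinantal stratum directly.

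Granting smoothness, each such point of $e$ lies on some line $L_z$. Infinitely many points of the segment lie on one of finitely many lines, so some $L_z$ contains an open piece of $e$ and equals the edge line. Distinct edges have distinct edge lines, so there are at most $|Z_{\mathrm{act}}|\le8$ edges. This uses the finiteness of $Z$; the edge assertion assumes the same hypothesis, or else handles positive-dimensional $Z$ by noting each component still yields finitely many lines through the degree count.
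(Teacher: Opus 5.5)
Your bound $|Z|\le8$ and your derivation of the branch lines match the paper's proof. The gap is in the edge count, at the step you flag yourself. You never show that a rank-three PSD lift $p$ of an edge point is a smooth point of $\{\det X=0\}$. The genericity argument you sketch, showing the bad set is lower dimensional along the segment, is not carried out, and it is not needed.

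The missing idea is strict feasibility. Since $X(p)\succeq0$ has rank three, its kernel is spanned by a \emph{real} vector $z$, and $\operatorname{adj}X(p)=\gamma zz^{\mathsf T}$ with $\gamma\ne0$. Let $p_0$ be a strictly feasible point. The derivative of $F=\det X$ at $p$ in the direction $p_0-p$ is $\gamma z^{\mathsf T}(X(p_0)-X(p))z=\gamma z^{\mathsf T}X(p_0)z$. This is nonzero because $X(p_0)\succ0$ and $z$ is real and nonzero. Hence $dF(p)\ne0$ at every rank-three PSD point, so the exceptional set you planned to control is empty. Singular rank-three points can exist only at complex (or non-PSD) points, and these never arise as lifts. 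Your fallback of working with the tangent space of the rank-three determinantal stratum reduces to the same transversality condition and needs the same input.

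Your Lagrange step also relies on a local fact you should state. Near $p$, the three nonzero eigenvalues of $X$ stay positive, so $\operatorname{sign}\det X$ equals the sign of the smallest eigenvalue. Thus $Q$ coincides locally with $\{F\ge0\}$, and its boundary near $p$ is the smooth real hypersurface. Only then must the supporting hyperplane $\{c^{\mathsf T}(x,y)=\text{const}\}$ be the tangent hyperplane, which gives $\nabla F(p)\propto(c,0,0,0)$. The paper phrases the same point differently: the projection cannot be a submersion at $p$, since otherwise an open planar neighbourhood of the boundary point $\pi(p)$ would lie in $P$. With these two facts supplied, the rest of your edge count goes through.

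Finally, drop your closing aside about positive-dimensional $Z$. The edge assertion is made in addition to finiteness of $Z$. When $Z$ contains a curve, the lines $L_z$ form a continuous family, and no finite line count follows from a degree argument.
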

\begin{proof}
Put $F=\det X$. At a rank-three point with kernel $\mathbb C z$,
$\operatorname{adj}(X)=\gamma zz^{\mathsf T}$ for some $\gamma\ne0$, so
\[
 dF=\gamma\bigl(z^{\mathsf T}D_xz,z^{\mathsf T}D_yz,
 z^{\mathsf T}K_1z,z^{\mathsf T}K_2z,z^{\mathsf T}K_3z\bigr).
\]
At a smooth point, the projection is critical precisely when this
tangent hyperplane contains the three vertical coordinate directions.
Thus $[z]\in Z$, and smoothness forces $[z]\in Z_{\mathrm{act}}$.
The identity $Xz=0$ then gives \eqref{eq:branch-line}, which is a genuine
line because its two variable coefficients do not both vanish.
Three quadrics in $\mathbb P^3$ with finite common zero locus have
at most eight distinct common zeros by B\'ezout's theorem.

For the polygon assertion, any rank-three PSD lift of a boundary point
is smooth in $\{F=0\}$: the direction toward a strictly feasible matrix
has nonzero determinant differential, since its quadratic form is
positive on a real kernel vector. Locally the smooth hypersurface is
PSD, as its other three eigenvalues remain positive. Its projection
cannot be a submersion at a boundary point of $P$, since that would
put an open planar neighborhood in $P$. Such a lift is therefore
critical. Every edge line contains infinitely many critical values
and hence coincides with one of the finitely many lines $L_z$.
Distinct edges have distinct supporting lines, proving the bound.
\end{proof}

\begin{remark}\label{rem:branch-vs-loci}
The restriction to $Z_{\mathrm{act}}$ excludes identically zero equations
that cannot arise from smooth critical points. Strict feasibility alone
does not exclude them over $\mathbb C$: a nonzero complex vector can
satisfy $z^{\mathsf T}Hz=0$ even for $H\succ0$.
For generic data of type $(4,2,3)$, the degree-eight rank-three branch
splits into eight lines over $\mathbb C$. Irreducibility over the field
generated by the pencil coefficients is compatible with this splitting;
see \cite[Theorem 1.1 and Remark 2.2]{SinnSturmfels2015}.

Choose the polarity in Proposition~\ref{prop:p3-spans} so that $\dim W\le6$.
For the resulting span-reduced lift, the vertical matrix space is
\[
 \mathcal V=W\cap U^\perp,\qquad \dim \mathcal V=\dim W-3\le3.
\]
Its kernel locus satisfies
\[
 Z_{\mathcal V}=\{[z]\in\mathbb P^3_{\mathbb C}:zz^{\mathsf T}\perp \mathcal V_{\mathbb C}\}
 \supseteq X_U,
 \qquad X_U=\{[z]\in\mathbb P^3_{\mathbb C}:zz^{\mathsf T}\in U_{\mathbb C}\}.
\]
The inclusion follows from $\mathcal V\subseteq U^\perp$. Equality holds when
$\mathcal V=U^\perp$; in general neither equality nor finiteness follows from
this inclusion. There are three independent defining quadrics only
when $\dim W=6$. If $\dim W<6$, the locus $Z_{\mathcal V}$ is necessarily
positive-dimensional. Thus the full pencil and the span-reduced lift
must be distinguished when applying Lemma~\ref{lem:rank-three-lines}.
\end{remark}

\begin{proposition}\label{prop:octagon-branch}
For every $h>1$, the lift in \eqref{eq:explicit-lift} has type $(4,2,3)$.
Each point in the relative interior of an edge has a unique lift,
of rank three; each vertex has a unique lift, of rank two.
The reduced Zariski closure of the critical values of the projection
on the smooth rank-three determinantal locus is the union of the
eight edge lines, with defining polynomial
\[
 b_h(x,y)=\prod_{s,t,\varepsilon\in\{\pm1\}}
 \bigl(1-s(1-\varepsilon/h)x-t(1+\varepsilon/h)y\bigr).
\]
Its degree is eight. These eight rank-three branch lines persist
under sufficiently small generic perturbations and specialize to $b_h$.
\end{proposition}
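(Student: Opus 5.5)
The plan is to write the lift \eqref{eq:explicit-lift} as a pencil and then read everything off the octahedron $Q_h$. Let $X=X(x,y,\tau)$, with $\tau$ coordinates on the vertical space
\[
 \mathcal K=\{K\in\mathbb S^4:q_{s,t,\varepsilon}^{\mathsf T}Kq_{s,t,\varepsilon}=0\text{ for all }s,t,\varepsilon\}.
\]
By \eqref{eq:octagon-two-bases} the eight projectors $q_{s,t,\varepsilon}q_{s,t,\varepsilon}^{\mathsf T}$ span a seven-dimensional space $U$. Hence $\dim\mathcal K=10-7=3$, and the type is $(4,2,3)$.

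\textbf{Uniqueness and ranks of lifts.} The fibre over $(x,y)$ consists of the PSD $X$ whose values $q^{\mathsf T}Xq$ at the eight facet vectors are the prescribed slacks. The section argument of Remark~\ref{rem:explicit-lift} gives the lift $X=\sum_j\lambda_jc_jc_j^{\mathsf T}$ for any point $\sum_j\lambda_j v_j$ of $Q_h$. I will prove that the fibre is a single point by showing that the map $X\mapsto(q_i^{\mathsf T}Xq_i)_i$ from the spectrahedron $Q$ to slack vectors of $Q_h$ is injective.
\begin{itemize}
\item If $X$ lifts a point in the relative interior of an edge $u_sv_t$ (say), four facet slacks vanish. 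The corresponding four facet vectors $q$ span a two-dimensional space, so they force $\ker X\supseteq$ that two-dimensional span.
\item Then $X$ is supported on the complementary two-dimensional space, i.e.\ on $\operatorname{span}(c_{u_s},c_{v_t})$. There the remaining slack equations determine the $2\times2$ block uniquely, giving $X=\lambda c_{u_s}c_{u_s}^{\mathsf T}+(1-\lambda)c_{v_t}c_{v_t}^{\mathsf T}$, of rank two.
\item A point on an edge of $P_h$ lies on a facet triangle of $Q_h$ but not on an edge of $Q_h$. Only two slacks vanish, the two facet vectors are independent, and $\ker X$ is their span intersected with what is forced. I expect to show $\operatorname{rank}X=3$ by direct kernel computation: $X$ lies in the face $\{X\succeq0:Xq_1=0\}$, and the octahedron facet triangle lifts as a three-dimensional face isomorphic to $\PSD2$-type data. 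I will check explicitly that the triangle $u_s v_t w_\varepsilon$ has lifts $\sum\lambda_jc_jc_j^{\mathsf T}$ with $c_{u_s},c_{v_t},c_{w_\varepsilon}$ independent.
\item So interior points of a facet triangle, which is where the edge interiors of $P_h$ lie, have rank-three lifts. Uniqueness comes from the face of the PSD cone orthogonal to $q_1q_1^{\mathsf T}$ being $\PSD3$ restricted to $q_1^\perp$, where the remaining seven equations span.
\end{itemize}
The main obstacle is this injectivity/uniqueness claim. I will try first to show that the seven independent projectors together with $\mathcal K$ meeting $\PSD4$ only trivially in the relevant faces give a unique point.

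\textbf{Branch lines.} For the branch locus I apply Lemma~\ref{lem:rank-three-lines}. Here $Z=\{[z]:zz^{\mathsf T}\perp\mathcal K\}=X_U$, which by the dual description is the locus of $z$ with $zz^{\mathsf T}\in U_{\mathbb C}$. I will show it consists exactly of the eight points $[q_{s,t,\varepsilon}]$. By B\'ezout it is at most eight if finite, and finiteness follows from three quadrics (a basis of $\mathcal K$) having these eight points as a complete intersection; I will verify this by checking that the $q$'s impose independent conditions on quadrics modulo $\mathcal K$.

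For $z=q_{s,t,\varepsilon}$, the line \eqref{eq:branch-line} is $q^{\mathsf T}Xq=0$, i.e.\ $1-s(1-\varepsilon/h)x-t(1+\varepsilon/h)y=0$. These points are active since the linear part is nonzero. By the lemma the critical values lie in the union of the eight edge lines. Conversely, each edge interior consists of smooth rank-three lifts, which are critical by the lemma's proof, so each edge line contains infinitely many critical values. Hence the reduced Zariski closure is exactly the union of the eight lines, with defining polynomial $b_h$ of degree eight.

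\textbf{Persistence under perturbation.} For small generic perturbations the eight points of $Z$ are reduced, by transversality of the complete intersection, and so persist as eight nearby points. Their lines $L_z$ vary continuously and specialize to the edge lines, so the product of the perturbed linear forms specializes to $b_h$.
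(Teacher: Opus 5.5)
Your route is the paper's: explicit lifts $\sum_j\lambda_jc_jc_j^{\mathsf T}$, rank three from independence of $c_{u_s},c_{v_t},c_{w_\varepsilon}$, Lemma~\ref{lem:rank-three-lines} for the branch lines, and simple kernel points for persistence. The genuine gap is the step you call the main obstacle, which you leave open; the uniqueness and rank-three assertions depend on it. Every PSD lift over an edge-interior point satisfies $Xq=0$ for that edge's $q=q_{s,t,\varepsilon}$, because $q^{\mathsf T}Xq=0$. Two such lifts therefore differ by some $K\in\mathcal K$ with $Kq=0$. So uniqueness is equivalent to $\mathcal K\cap\mathbb S(q^\perp)=\{0\}$, that is, to linear independence of $K_1q,K_2q,K_3q$ for a basis $K_i$ of $\mathcal K$.

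Your heuristic that ``$\mathcal K$ meets $\PSD4$ only trivially in the relevant faces'' cannot give this. It holds automatically: pair with the positive definite sum $\sum_{st\varepsilon=1}q_{s,t,\varepsilon}q_{s,t,\varepsilon}^{\mathsf T}$. Yet an indefinite $K\in\mathcal K$ with $Kq=0$ would still move a rank-three lift inside its fibre. Your last-bullet condition, that the seven restricted projectors span $\mathbb S(q^\perp)$, is the right one, but you only assert it. The paper verifies it from the explicit pencil. The kernel quadrics $z^{\mathsf T}K_iz$ are $z_2^2-z_1^2$, $z_3^2-2z_1^2+(2/h)z_1z_2$ and $z_4^2-2z_1^2-(2/h)z_1z_2$. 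Their Jacobian at $[q]$ in the chart $z_1=1$ is $8z_2z_3z_4\neq0$. Since a vertex lies on a closed edge, the same fact gives uniqueness at vertices too.

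Your determination of $Z$ has the same gap. Checking that the eight points impose independent conditions on quadrics only recomputes $\dim\mathcal K=3$. It does not exclude a common curve of the three quadrics over $\mathbb C$. Solve them instead. If $z_1=0$, then $z=0$. For $z_1=1$ one gets $z_2=\varepsilon$, $z_3=\pm a_\varepsilon$, $z_4=\pm b_\varepsilon$, which are exactly the eight points $[q_{s,t,\varepsilon}]$.

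B\'ezout's total length eight then makes every point simple, which is precisely the independence of the $K_iq$. This one computation closes the uniqueness gap. It also justifies placing all critical values on the eight lines, and supplies the reducedness your persistence step assumes. For persistence you must also say why the continued lines remain rank-three branches: along each, $Xq=0$ still determines $\tau$, and positivity on $q^\perp$ persists near edge interiors.

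Correct your incidence counts as well. A vertex of $P_h$ is the section of an edge of $Q_h$ lying in exactly two triangles, so two slacks vanish, not four. An edge-interior point lies in one open triangle, so one slack vanishes, not two. With two, $\ker X$ would be at least two-dimensional, contradicting rank three. Your vertex argument survives the correction, since $q_{s,t,\pm}$ are independent and orthogonal to $c_{u_s},c_{v_t}$.

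The announced injectivity of $X\mapsto(q_i^{\mathsf T}Xq_i)_i$ on the whole spectrahedron is false. Fibres over interior points are three-dimensional, which is what type $(4,2,3)$ means; only boundary fibres are points. Finally, Lemma~\ref{lem:rank-three-lines} needs strict feasibility and compactness, which you do not check. The average of the $c_jc_j^{\mathsf T}$ and the identity $\tr(\operatorname{diag}(1,1,2,2)X)=1$ supply them.
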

\begin{proof}
With $\alpha,\beta$ as in Remark~\ref{rem:explicit-lift}, the equations
of that lift are equivalent to $X\succeq0$ for the pencil
\[
 X(x,y,\tau)=
 \begin{pmatrix}
 1-\tau_1-2\tau_2-2\tau_3&(\tau_2-\tau_3)/h&-\alpha x/4&-\alpha y/4\\
 (\tau_2-\tau_3)/h&\tau_1&-\beta x/4&\beta y/4\\
 -\alpha x/4&-\beta x/4&\tau_2&0\\
 -\alpha y/4&\beta y/4&0&\tau_3
 \end{pmatrix}.
\]
Indeed, expansion of $q_{s,t,\varepsilon}^{\mathsf T}Xq_{s,t,\varepsilon}$
gives the required slack. Conversely, comparing the seven independent
sign functions $1,\varepsilon,sa_\varepsilon,\varepsilon sa_\varepsilon,
tb_\varepsilon,\varepsilon tb_\varepsilon,st$ gives exactly these equations.
The five pencil directions are independent. The average of the six
matrices $c_jc_j^{\mathsf T}$ is a positive-definite feasible point
at $(x,y)=(0,0)$, and
$\tr(\operatorname{diag}(1,1,2,2)X)=1$ proves compactness.
Thus the affine dimension is five and the projection eliminates three variables.

Write $K_i=\partial X/\partial\tau_i$. The kernel locus $Z$ of
Lemma~\ref{lem:rank-three-lines} is defined here by
\begin{equation}\label{eq:octagon-kernel-quadrics}
 z_2^2-z_1^2=0,\qquad
 z_3^2-2z_1^2+(2/h)z_1z_2=0,\qquad
 z_4^2-2z_1^2-(2/h)z_1z_2=0.
\end{equation}
There are exactly eight projective solutions, namely
$[q_{s,t,\varepsilon}]$. They are simple: $z_1\ne0$, and in the
chart $z_1=1$ the Jacobian determinant is $8z_2z_3z_4\ne0$.
For each solution, \eqref{eq:branch-line} is exactly the corresponding
edge line, so $Z_{\mathrm{act}}=Z$. Lemma~\ref{lem:rank-three-lines}
places all smooth rank-three critical values on these eight lines.

Conversely, the edge indexed by $(s,t,\varepsilon)$ is the section
of the triangular facet $\conv\{u_s,v_t,w_\varepsilon\}$ of $Q_h$.
An interior point of the section has strictly positive barycentric
weights on these three vertices. Its lift is therefore
\[
 X=\lambda_u c_{u_s}c_{u_s}^{\mathsf T}
   +\lambda_v c_{v_t}c_{v_t}^{\mathsf T}
   +\lambda_w c_{w_\varepsilon}c_{w_\varepsilon}^{\mathsf T},
 \qquad \lambda_u,\lambda_v,\lambda_w>0.
\]
The three vectors are independent: the minor on rows $1,3,4$ of
$[c_{u_s}\ c_{v_t}\ c_{w_\varepsilon}]$ equals $st/(4\sqrt2)$.
Thus $X$ has rank three and kernel $q_{s,t,\varepsilon}$.
The differential in $(x,y)$ is a nonzero multiple of the edge normal,
so this is a smooth critical point. At an endpoint exactly two
weights are positive, giving rank two.

Every PSD lift over this edge satisfies $Xq=0$, since $q^{\mathsf T}Xq=0$.
The vectors $K_1q,K_2q,K_3q$ are independent by the same Jacobian
calculation, so $Xq=0$ determines the three fiber coordinates uniquely.
This proves both uniqueness assertions. Each edge supplies an open
segment of critical values, so the reduced branch polynomial is exactly
$b_h$ and the eight-line bound of Lemma~\ref{lem:rank-three-lines} is attained.

Finally, the eight simple projective kernel solutions continue under
small perturbations of the three vertical matrices. Each continued
kernel gives a line $q^{\mathsf T}X(x,y,\tau)q=0$, independent of $\tau$;
the equations $Xq=0$ still determine $\tau$ uniquely along that line.
Near an interior edge point, positivity on $q^\perp$ persists.
The eight lines therefore remain rank-three boundary branches, and
their product specializes, up to a nonzero scalar, to $b_h$.
\end{proof}
Figure~\ref{fig:edge-ranks} shows why the edge interiors have rank three
although their endpoints have rank two.

\begin{figure}[tbp]
\centering
\vspace*{12pt}
\begin{tikzpicture}[font=\small,line join=round]
\coordinate (u) at (0,0);\coordinate (v) at (4,0);\coordinate (w) at (2,3);
\coordinate (d) at (2,0);\coordinate (a) at (3.41421356,.87867966);
\filldraw[fill=figblue!6,draw=figblue!60] (u)--(v)--(w)--cycle;
\draw[figteal,line width=1.7pt] (d)--(a);
\fill[figblue] (u) circle (1.5pt);\fill[figblue] (v) circle (1.5pt);\fill[figblue] (w) circle (1.5pt);
\node[below left,align=center] at (u) {$u_s$\\\footnotesize rank $1$};
\node[below right,align=center] at (v) {$v_t$\\\footnotesize rank $1$};
\node[above,align=center] at (w) {$w_+$\\\footnotesize rank $1$};
\fill[figorange] (d) circle (2.1pt);\fill[figorange] (a) circle (2.1pt);
\node[below,figorange,font=\footnotesize] at (d) {rank $2$};
\node[right,figorange,font=\footnotesize] at (a) {rank $2$};
\node[above left,figteal,font=\footnotesize] at ($(d)!.55!(a)$) {rank $3$};
\draw[-{Stealth[length=2mm]},thick,figteal] (4.65,1.1)--node[above,font=\footnotesize]{section edge} (6.25,1.1);
\begin{scope}[shift={(8.1,1.1)},scale=1.8]
\filldraw[fill=figteal!5,draw=black!45]
 (.70710678,0)--(.5,.5)--(0,.70710678)--(-.5,.5)--
 (-.70710678,0)--(-.5,-.5)--(0,-.70710678)--(.5,-.5)--cycle;
\draw[figteal,line width=1.7pt] (.5,.5)--(0,.70710678);
\fill[figorange] (.5,.5) circle (1.2pt);\fill[figorange] (0,.70710678) circle (1.2pt);
\node[above right,font=\footnotesize] at (.5,.5) {$(s/2,t/2)$};
\node[above left,font=\footnotesize] at (0,.70710678) {$(0,t\varrho)$};
\end{scope}
\end{tikzpicture}
\caption{Ranks along a section edge, shown for $h=1+\sqrt2$ and $s=t=\varepsilon=1$.
The left triangle is a barycentric view of the facet
$\conv\{u_s,v_t,w_+\}$; its teal segment is the cut $z=0$.
The three corners lift to rank-one matrices. Each endpoint of the cut
mixes two independent vectors and has rank two, while an interior
point mixes all three and has rank three. The right panel locates
that edge in the octagon. These are matrix ranks in the explicit PSD
lift, not dimensions of faces of $Q_h$.}
\label{fig:edge-ranks}
\end{figure}

The proposition identifies the degree-eight branch for this particular
lift. It does not show that arbitrary size-four polygon lifts have
rank-three edge strata. In \cite[Example 4.2]{SinnSturmfels2015}, both
rank-two and rank-three branch loci contain the edge lines; in their
Example~4.3 only rank two is ramified. Thus branch selection requires
an argument about the representation, not only the polygon.

\begin{corollary}\label{cor:square-symmetry}
Every convex octagon invariant under $(x,y)\mapsto(-y,x)$ and
$(x,y)\mapsto(x,-y)$ has real PSD rank four.
\end{corollary}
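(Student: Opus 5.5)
The plan is to reduce every square-symmetric octagon, up to scaling and polarity, to a member of the family $P_h$ of Theorem~\ref{thm:octagons}. The lower bound $\xcPSD\ge4$ needs no reduction: \cite[Theorem 4.7]{GRT2013} says polygons of PSD rank three have at most four vertices. So only the upper bound is at issue.

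\emph{Orbit classification.} Let $G$ be the group generated by $(x,y)\mapsto(-y,x)$ and $(x,y)\mapsto(x,-y)$; it is the dihedral group of order eight.
\begin{enumerate}[label=(\arabic*),leftmargin=2em]
\item The vertex set $V$ of an invariant convex octagon $P$ is $G$-invariant. The centroid of $V$ is fixed by $G$, so it is the origin, and the origin lies in the interior of $P$.
\item The $G$-orbits of nonzero points have size four, for points on the coordinate axes or diagonals, and size eight otherwise. Hence $V$ is either one generic orbit $\{(\pm a,\pm b),(\pm b,\pm a)\}$ with $a>b>0$, or a union of two four-point orbits.
\item Two axial four-point orbits cannot both be vertex sets in convex position: the inner points lie in the convex hull of the outer ones. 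Two diagonal orbits fail the same way.
\item So in the second case $V=\{(\pm p,0),(0,\pm p),(\pm q,\pm q)\}$. Convex position forces $q<p<2q$. Indeed, $(q,q)$ must lie beyond the segment from $(p,0)$ to $(0,p)$, giving $2q>p$, and $(p,0)$ must lie beyond the segment from $(q,q)$ to $(q,-q)$, giving $p>q$.
\end{enumerate}

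\emph{Second case.} Scale by $1/(2q)$. The octagon becomes \eqref{eq:oct-family} with $\varrho=p/(2q)\in(\tfrac12,1)$. Put $h=\varrho/(1-\varrho)$; then $h>1$ and $\varrho=h/(h+1)$. Theorem~\ref{thm:octagons} gives PSD rank four, and scaling is an affine equivalence, so the rank is unchanged.

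\emph{First case.} Pass to the polar, which is legitimate since $0\in\operatorname{int}P$, and Lemma~\ref{lem:faces} preserves PSD rank under polarity. The polar is again a $G$-invariant octagon, because $G$ is orthogonal. Its vertices are the normalized outer normals of the edges of $P$.
\begin{itemize}
\item The edges of $P$ join $(a,b)$ to $(b,a)$, with normal along a diagonal, and join $(a,b)$ to $(a,-b)$, with normal along an axis, together with their $G$-images.
\item So the polar's vertex set consists of one axial orbit and one diagonal orbit, which is the situation already handled.
\end{itemize}

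The only point needing care is the convex-position bookkeeping in the orbit classification, in particular ruling out two orbits of the same type. Otherwise the argument is a direct reduction to Theorem~\ref{thm:octagons}.
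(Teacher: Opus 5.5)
Your proposal is correct and follows essentially the same route as the paper's proof. Both arguments classify the $G$-orbits, rule out two orbits on the same rays, and scale the axis-plus-diagonal case into the family $P_h$ with $h=\varrho/(1-\varrho)>1$. Both then handle the single generic orbit by passing to the polar, whose vertices are one axial and one diagonal orbit. The only differences are in detail: you justify the centre by the centroid argument, while the paper writes out the polar vertices $(\pm1/a,0),(0,\pm1/a),(\pm1/(a+b),\pm1/(a+b))$ explicitly.
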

\begin{proof}
The center is the origin. A nonzero orbit under this group has four points
if it lies on the axes or the diagonals, and eight points otherwise.
There are two possibilities for an eight-vertex invariant polygon.

If the vertex set consists of two four-point orbits, one must lie on the
axes and the other on the diagonals: two distinct orbits on the same four
rays cannot both consist of extreme points. Write these vertices as
$(\pm A,0),(0,\pm A),(\pm B,\pm B)$ with $A,B>0$.
All eight are extreme precisely when $B<A<2B$. Scaling by $1/(2B)$ gives
\eqref{eq:oct-family}, with $\varrho=A/(2B)$ and $h=\varrho/(1-\varrho)>1$.

Otherwise the vertices form the orbit of $(a,b)$ with $a>b>0$.
The eight edge normals of this polygon lie alternately on the axes and
the diagonals. Its polar is therefore an octagon of the first type.
Explicitly, its polar has axis vertices $(\pm1/a,0),(0,\pm1/a)$ and diagonal
vertices $(\pm1/(a+b),\pm1/(a+b))$; here $1<(a+b)/a<2$.
Polarity preserves PSD rank, proving the claim.
\end{proof}

The regular member also distinguishes symmetry of a polygon from symmetry
of a smallest lift. Following \cite{FSP2015equivariant,FSP2017polygons},
a lift is \emph{rotation-equivariant} if there is a homomorphism
$\rho:C_8\to\mathrm{GL}_r(\R)$ whose congruence action preserves the
affine slice and for which the projection intertwines this action with
rotation of the octagon. Write $\xcPSD^{C_8}(R_8)$ for the least size
in this model.

\begin{corollary}\label{cor:equivariant-octagon}
The regular octagon satisfies
\[
 \xcPSD(R_8)=4,\qquad \xcPSD^{C_8}(R_8)=5.
\]
\end{corollary}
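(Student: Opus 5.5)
The equality $\xcPSD(R_8)=4$ is immediate. Theorem~\ref{thm:octagons} with $h=1+\sqrt2$ gives a size-four lift of the regular octagon, and by \cite[Theorem 4.7]{GRT2013} no polygon with more than four vertices has PSD rank three. For the equivariant statement I would prove the upper bound and the lower bound separately.

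For $\xcPSD^{C_8}(R_8)\le5$ I would give an explicit rotation-equivariant lift of size five. The natural candidate comes from the Fawzi--Saunderson--Parrilo equivariant framework for regular polygons \cite{FSP2017polygons}. There, facet slacks of $R_8$ are certified as sums of squares of trigonometric functions on $C_8$ drawn from a $C_8$-invariant space. I would take the span of $1$, $\cos\theta$, $\sin\theta$, $\cos2\theta$, $\sin2\theta$ evaluated at the vertex angles $\theta=2\pi j/8$. This space is invariant under rotation by $\pi/4$, which acts on it by the representation $1\oplus\chi_1\oplus\chi_2$ (trivial plus two rotation blocks). It then remains to check that each facet slack $\cos(\pi/8)-\cos(\theta-\theta_i)$ on the eight vertices is a nonnegative function that is a square, or sum of squares, of elements of this five-dimensional space. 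The slack vanishes at two adjacent vertices, and a product of half-angle terms should give an explicit degree-two trigonometric polynomial whose square matches on the vertex set. The Gram matrices then form a $C_8$-orbit under $\rho$, so the resulting factorization, hence the lift, is equivariant.

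For the lower bound $\xcPSD^{C_8}(R_8)\ge5$, suppose an equivariant lift of size four exists. The representation $\rho$ of $C_8$ on $\R^4$ decomposes into real irreducibles: trivial or sign characters (dimension one) and rotation blocks $\chi_k$, $k=1,2,3$ (dimension two). The congruence action must intertwine with rotation of the octagon, and the projection to $\R^2$ is equivariant. So the symmetric square $\operatorname{Sym}^2\R^4$ must contain the faithful two-dimensional rotation $\chi_1$ or $\chi_3$, which forces specific pairs of characters, e.g. $\chi_a\otimes\chi_b\ni\chi_{a\pm b}$. I would enumerate the finitely many possible decompositions: $1\oplus1\oplus\chi_k$, $\chi_a\oplus\chi_b$, combinations with the sign character, and so on. For each, I would reduce the slice to its invariant description, as in Proposition~\ref{prop:p3-spans}, and show that the $C_8$-orbit of the slack at one facet cannot be factored. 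The tool here is Theorem~\ref{lem:corner-independent}: an equivariant factorization has vertex factors forming one orbit $\rho(g)B\rho(g)^{\mathsf T}$. The eight facet factors likewise form one orbit, and their span $U$ must carry the rank-three pairing. In the octagon construction the eight rank-one facet factors realize the two-orthonormal-bases dependence with $\operatorname{diag}(4,4,8,8)$. Equivariance would force the two groups of four (even and odd facets) to be exchanged by $\rho(r)$, and I would check that no character decomposition of $\rho$ is compatible with this, together with the rank-budget restrictions of Proposition~\ref{prop:rank-budget} (all factors of rank one or two, incident ones not both two). Since the orbit is transitive, all facet factors share one rank and all vertex factors share one rank. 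The incidence restriction leaves only the cases (rank one, rank one), (rank one, rank two) and (rank two, rank one). Each case I would settle by Theorem~\ref{lem:corner-independent}, Corollary~\ref{cor:five-dim-facets}, Lemma~\ref{lem:five-factors}, and the character constraints.

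\textbf{Main obstacle.} I expect the lower bound to be the hardest step, specifically the case where all factors on both sides have rank one with $\rho=\chi_a\oplus\chi_b$. There I would try to show directly that a transitive orbit of eight rank-one matrices under such $\rho$ spans a space whose dimension or dependence structure contradicts Theorem~\ref{lem:corner-independent}, for example by spanning too few dimensions or by forcing a dependence with fewer than eight terms.
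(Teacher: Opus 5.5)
Your argument for $\xcPSD(R_8)=4$ is correct, and your frequency set $\{0,\pm1,\pm2\}$ is the right space for the upper bound (the paper just cites the size-$(2k-1)$ construction of \cite{FSP2017polygons}). However, a single square cannot work there: one checks that no choice of signs in $\pm\sqrt{\ell}$ has vanishing $\chi_3$- and $\chi_4$-coefficients. So you need a genuine sum of squares, which that citation supplies.

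The real gap is in the lower bound. Your reduction to constant ranks on each side, with one side of rank one, matches the paper. You should still justify covariant facet factors: make $\rho$ orthogonal and set $A_i=R^iA_0R^{-i}$. After that, every tool you name is a statement about arbitrary size-four factorizations: Theorem~\ref{lem:corner-independent}, Lemma~\ref{lem:five-factors}, Corollary~\ref{cor:five-dim-facets} and Proposition~\ref{prop:rank-budget}. The non-equivariant lift of Theorem~\ref{thm:octagons} at $h=1+\sqrt2$ satisfies all of them, with eight rank-one facet factors and rank-two vertex factors. The only constraint equivariance adds to the dependence in Theorem~\ref{lem:corner-independent} is already met by that lift. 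Conjugation by $R$ carries the unique relation $\sum_ic_iA_i=0$ to a multiple of itself, so $c_{i+1}=\lambda c_i$ with $\lambda=\pm1$, and positivity forces $c_i=(-1)^ic_0$. This is exactly the alternating pattern in \eqref{eq:octagon-two-bases}. Your plan has no step that tests the actual slack values against the representation, which is why your ``main obstacle'' stays unresolved.

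The missing idea is to turn the rank-one side into a sparse SOS certificate on $\mathbb Z_8$ and refute it by duality. Suppose $B_j=R^jbb^{\mathsf T}R^{-j}$, or $A_i=R^iaa^{\mathsf T}R^{-i}$. Then the fixed slack $\ell(j)=\cos\delta-\cos(\pi j/4+\delta)$, with $\delta=\pi/8$, is a sum of squares of functions in the span of the coordinates of $R^jb$, or of $R^{-j}a$. This span is a rotation-invariant space of dimension at most four, hence a sum of some of the isotypic pieces $H_0,\dots,H_4$. Dimension at most three would give a size-three lift by Proposition~\ref{prop:sos-rank}, so the Fourier support is $\{\pm a,\pm b\}$ or $\{0,4,\pm a\}$. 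Equal parity is impossible, since sums of squares would then lack the frequency-one term of $\ell$. Multiplication by $\chi_4=(-1)^j$ then reduces to $K=\{\pm1,\pm2\}$ and $K=\{0,4,\pm1\}$. For these, take $\gamma=1$ and $\gamma=2$ respectively, and set $L_\gamma(\chi_k)=e^{-i\widetilde k\delta}$ when $k\equiv\widetilde k \pmod 8$ with $|\widetilde k|\le\gamma$, and $L_\gamma(\chi_k)=0$ otherwise. Its moment matrix on $K$ is block diagonal with rank-one Hermitian PSD blocks, so $L_\gamma(|f|^2)\ge0$ for every $f$ supported on $K$, whereas $L_\gamma(\ell)=\cos\delta-1<0$. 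This settles all rank cases at once, with no enumeration of $\rho$.
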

\begin{proof}
Theorem~\ref{thm:octagons} gives the unrestricted value. The equivariant
construction for regular $2^k$-gons in \cite{FSP2017polygons} has size
$2k-1$, giving the upper bound five. We prove the matching lower bound
using a small instance of the Fourier separation method from that paper.

Suppose a size-four rotation-equivariant lift exists. The equivariant
factorization theorem of \cite{FSP2015equivariant} gives covariant
vertex factors. Averaging an inner product makes the representation
orthogonal; write $R$ for its generator. Since rotations act freely
and transitively on facets, one facet factor can also be rotated to
give covariant facet factors. The ranks are therefore constant on each
side, and Proposition~\ref{prop:rank-budget} forces one side to have
rank one. If the vertex factors are $R^jbb^{\mathsf T}R^{-j}$, a fixed
facet slack is a sum of squares of coordinate functions of $R^jb$.
If the facet factors are $R^iaa^{\mathsf T}R^{-i}$, the same slack is
$\|B_0^{1/2}R^{-j}a\|^2$. In either case it has an SOS certificate in
a rotation-invariant real function space of dimension at most four.

Index the vertices by $j\in\mathbb Z_8$, put $\delta=\pi/8$, and use
the facet slack
\[
 \ell(j)=\cos\delta-\cos(\pi j/4+\delta).
\]
Write $\chi_k(j)=e^{\pi i k j/4}$ and
$H_k=\operatorname{span}_{\R}\{\Re\chi_k,\Im\chi_k\}$.
The real function space decomposes into $H_0,H_1,H_2,H_3,H_4$;
the first and last have dimension one and the others dimension two.
A certifying invariant space of dimension at most three would give a
size-three lift by Proposition~\ref{prop:sos-rank}, which is impossible.
Thus its Fourier support is either
\[
 K=\{\pm a,\pm b\}\quad(1\le a<b\le3),
 \qquad\text{or}\qquad K=\{0,4,\pm a\}\quad(1\le a\le3).
\]
If all frequencies have the same parity, their differences are even,
so sums of squared moduli cannot have the nonzero frequency-one
coefficient of $\ell$. Multiplying functions by $\chi_4(j)=(-1)^j$
preserves their squares and shifts $K$ by four. The remaining supports
therefore reduce to $\{\pm1,\pm2\}$ and $\{0,4,\pm1\}$.

For $\gamma=1,2$, define a complex-linear functional by
\[
 L_\gamma(\chi_k)=
 \begin{cases}
 e^{-i\widetilde k\delta},&
 k\equiv\widetilde k\pmod8,\quad |\widetilde k|\le\gamma,\\
 0,&\text{otherwise}.
 \end{cases}
\]
For $K=\{\pm1,\pm2\}$ take $\gamma=1$ and the two clusters
$\{-2,-1\}$, $\{1,2\}$. For $K=\{0,4,\pm1\}$ take $\gamma=2$
and the clusters $\{-1,0,1\}$, $\{4\}$. In each case the matrix
$(L_\gamma(\chi_{k-l}))_{k,l\in K}$ is zero between clusters and has
rank-one Hermitian PSD blocks within them. Hence
$L_\gamma(|f|^2)\ge0$ for every function supported on $K$, whereas
$L_\gamma(\ell)=\cos\delta-1<0$. This contradiction excludes size four.
\end{proof}

\section{Open problems}\label{sec:open}
The universal count strengthens the quantitative bounds while leaving
the following unrestricted questions open. We use the labels
P1--P5 to distinguish the different targets.
\begin{enumerate}[label=\textbf{P\arabic*.},leftmargin=2.5em,itemsep=5pt]
\item \emph{Permutahedron.} Determine the order of $\xcPSD(\Pi_n)$.
The bounds are $n+\log_3n-O(1)$ and $O(n\log n)$. Can unrestricted
PSD lifts have size $o(n\log n)$?
\item \emph{Universal vertex count.} Does $\xcPSD(P)=\Omega(\log N)$
hold uniformly for polytopes with $N$ vertices, as asked in
\cite[Section 5]{FawziSafey2018}? This would imply
$\xcPSD(\Pi_n)=\Theta(n\log n)$. Equation~\eqref{eq:universal-count}
still has a quadratic exponent in $r$ when $d$ is fixed.

Observe that the sharper inequality $N\le2^{\xcPSD(P)-1}$ is false. Indeed,
$Q=\{x\in[-1,1]^7:\sum_i x_i=0\}$ has $140$ vertices and
$\xcPSD(Q)\le8$. At a vertex of $Q$, at most one coordinate can lie
in $(-1,1)$, since two such coordinates could be perturbed in opposite
directions. Seven signs cannot sum to zero; the other six signs have
an even sum, so the remaining coordinate must be zero, with three signs
of each kind. Conversely, fixing those six signs determines a unique
point of $Q$, hence a vertex. Thus there are $7\binom63=140>2^7$
vertices. The cube is two-level and has PSD rank eight \cite{GRT2013};
its section inherits that lift by imposing the additional affine equation.
\item \emph{Rank-four polygons.} Can a polygon of PSD rank four have
more than eight vertices? Theorem~\ref{thm:octagons} supplies a family
attaining eight.
Theorem~\ref{cor:rank-four} leaves only nine through twelve vertices
to exclude.
\item \emph{Polygon growth.} Does every convex $N$-gon have PSD rank
$\Omega(\log N)$, or even at least $1+\lceil\log_2N\rceil$?
For regular polygons, Vandaele--Glineur--Gillis
\cite[Conjecture 1]{VGG2018} conjecture equality with the latter
expression. Logarithmic lower bounds for equivariant lifts
\cite{FSP2017polygons} impose an additional restriction and do not
settle this question.
\item \emph{SOS versus unrestricted lifts.} Can unbounded vertex-factor
ranks give asymptotically smaller lifts for permutahedra or regular
polygons? Corollary~\ref{cor:bounded-vertex-ranks} rules out an
improvement when these ranks stay bounded.
Corollary~\ref{cor:rank-distribution} quantifies the ranks required
at almost every vertex in hypothetical smaller factorizations.
\end{enumerate}
A finite target for P1 is $\xcPSD(\Pi_5)\ge7$. Excluding size-six
factorizations would, by the face $\Pi_5\subset\Pi_n$, give
$\xcPSD(\Pi_n)\ge n+2$ for every $n\ge5$, extending the range
currently obtained from vertex counting.

\subsection{Scope of the comparison methods}\label{sec:barriers}
The following support-based restriction is due to Lee--Theis
\cite{LeeTheis2012}; we include its short matrix proof.
\begin{proposition}\label{prop:support}
Let $b$ be a function of zero patterns such that
$b(\operatorname{supp}M)\le\rkpsd(M)$ for every nonnegative matrix $M$.
Then $b(\operatorname{supp}M)\le\rank M$ for every such $M$.
\end{proposition}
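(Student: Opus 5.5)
The plan is to compare the PSD rank with the ordinary rank through a padding construction. The construction produces a matrix with the same zero pattern as $M$ whose PSD rank is close to $\rank M$. Fix a nonnegative matrix $M$ with support $S=\operatorname{supp}M$ and put $r=\rank M$. I would write $M=UV$ with $U\in\R^{m\times r}$ and $V\in\R^{r\times n}$. Then I would aim for a nonnegative matrix $M'$ with $\operatorname{supp}M'=S$ and $\rkpsd(M')\le r$. Applying the hypothesis to $M'$ then gives $b(S)\le\rkpsd(M')\le r$, which is the claim. So the task reduces to producing such an $M'$ with the same zero pattern.

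The natural candidate uses the Hadamard square, as in Lemma~\ref{lem:basic}. If $R$ has rank $r$ and $\operatorname{supp}R=S$, then $M'=R\circ R$ is nonnegative, has support exactly $S$, and satisfies $\rkpsd(M')\le\rank R=r$. So it suffices to find a matrix of rank at most $r$ with the zero pattern of $M$. The obvious choice is $R=M$ itself, giving $M'=M\circ M$. Its support is $S$, and Lemma~\ref{lem:basic} gives $\rkpsd(M\circ M)\le\rank M$, since the square root of $M\circ M$ may be taken to be $M$. Concretely, with rows $u_i$ of $U$ and columns $v_j$ of $V$, the factors are $A_i=u_iu_i^{\mathsf T}$ and $B_j=v_jv_j^{\mathsf T}$. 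These are PSD of order $r$, and $\tr(A_iB_j)=(u_i^{\mathsf T}v_j)^2=M_{ij}^2$.

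Hence $b(S)=b(\operatorname{supp}(M\circ M))\le\rkpsd(M\circ M)\le\rank M$, which finishes the argument. I expect no step to be a real obstacle. The only points to check are that the hypothesis is applied to the nonnegative matrix $M\circ M$ rather than to $M$, and that squaring entrywise preserves the zero pattern exactly. Both are immediate, because $M_{ij}^2=0$ if and only if $M_{ij}=0$. It also follows that any support-based lower bound for PSD rank can never exceed the ordinary rank, which is the content of the Lee--Theis barrier.
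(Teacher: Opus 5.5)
Your proposal is correct and is essentially the paper's proof: apply the hypothesis to $M\circ M$, which has the same support as $M$ and satisfies $\rkpsd(M\circ M)\le\rank M$ by Lemma~\ref{lem:basic} with $R=M$. The opening talk of a ``padding construction'' is unnecessary, since the Hadamard square already does all the work.
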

\begin{proof}
The matrix $M\circ M$ has the same support as $M$, and
$\rkpsd(M\circ M)\le\rank M$ by Lemma~\ref{lem:basic}.
\end{proof}
For slack matrices the resulting bound is at most $\dim P+1$.
This does not limit every argument using the face lattice: the matrix
$M\circ M$ need not itself be a polytope slack matrix, and the
hexagonal-face lower bound uses that additional structure.
Likewise, sharp algebraic-degree bounds for general convex bodies
\cite{FawziSafey2018} do not preclude stronger bounds for polytopes.

\begin{remark}
The parity polytope
$\Par_n=\conv\{x\in\{\pm1\}^n:\prod_i x_i=1\}$ has linear lifts
of size $O(n)$ \cite{CarrKonjevod2005,KaibelPashkovich2011}, whereas
for $n\ge8$ its PSD lifts equivariant under evenly signed permutations
have size at least $\binom n{\lceil n/4\rceil}$
\cite[Theorem 2]{FSP2015equivariant}. Thus a symmetry restriction can
change the order of the problem. Similarly, a prescribed SOS space
is an additional restriction.
For unrestricted lifts, Proposition~\ref{prop:sos-rank} gives, when
$\xcPSD(\Pi_n)=n+t$,
\[
 \sosc(\Pi_n)\le(t+1)\left(n+\frac t2\right).
\]
This explains how the SOS lower bound yields an additive logarithmic
term above dimension, without determining unrestricted PSD order.
\end{remark}
\bibliographystyle{plain}
\bibliography{refs}
\end{document}